%
%       "Spherical subcomplexes of spherical buildings"
%
%       article.tex (frame)
%

\documentclass[12pt]{article}

\usepackage{graphicx}
\usepackage[textwidth=78ex,centering]{geometry}
\usepackage{amsmath}
\usepackage{amsthm}
\usepackage{dsfont}

% ==== newtheorem ====
\theoremstyle{plain}
\newtheorem{maintheorem}{Theorem}

\swapnumbers
\newtheorem{theorem}{Theorem}[section]
\newtheorem{lemma}[theorem]{Lemma}
\newtheorem{lemmadef}[theorem]{Lemma and definition}
\newtheorem{cor}[theorem]{Corollary}
\newtheorem{cordef}[theorem]{Corollary and definition}
\newtheorem{prop}[theorem]{Proposition}
\newtheorem{propdef}[theorem]{Proposition and defintion}
\newtheorem{observation}[theorem]{Observation}
\theoremstyle{definition}
\newtheorem{definition}[theorem]{Definition}
\newtheorem{notation}[theorem]{Notation}

%
% ==== maps ====
\DeclareMathOperator{\maps}{\rightarrow}     % map
\DeclareMathOperator{\im}{\mathrm{im}}       % image
%
% ==== buildings ====
\DeclareMathOperator{\cham}{\mathrm{Ch}}     % chambers
\DeclareMathOperator{\vt}{\mathrm{vt}}       % vertices
\DeclareMathOperator{\simp}{\mathcal{S}}     % simplices
\DeclareMathOperator{\lk}{Lk}                % link
\DeclareMathOperator{\st}{St}                % star
\DeclareMathOperator{\op}{op}                % opposite
\DeclareMathOperator{\proj}{proj}            % projection
\DeclareMathOperator{\restr}{\mathcal{R}}    % restriction
\DeclareMathOperator{\height}{ht}            % height
\DeclareMathOperator{\conv}{Conv}            % convex hull
\DeclareMathOperator{\opp}{\mathrm{Opp}}     % antipodal points
\DeclareMathOperator{\defcl}{\mathcal{C}}
\DeclareMathOperator{\defclp}{\mathcal{C}^\dagger}
\DeclareMathOperator{\defclpp}{\mathcal{C}^\ddagger}
\newcommand{\hor}{\mathrm{hor}}
\newcommand{\ver}{\mathrm{ver}}
%
% ==== spaces ====
         % standard sphere
\newcommand{\projspace}[1]{\mathds{P}_{\! #1}}    % projective space
\newcommand{\finitefield}[1]{\mathds{F}_{\! #1}}  % finite field
    % realnumbers
%
% ==== groups ====
\DeclareMathOperator{\aut}{\mathrm{Aut}}          % automorphisms
        % stabilizer
%
% ==== misc ====
\DeclareMathOperator{\rank}{rk}        % rank
\newcommand{\abs}[1]{\lvert#1\rvert}   % absolute value
%
% ==== letters ====
\renewcommand{\theta}{\vartheta}
\renewcommand{\epsilon}{\varepsilon}
\renewcommand{\phi}{\varphi}
%
% ==== typeset ====
\openup2\jot
\parindent1em

\begin{document}
    \title{Spherical subcomplexes of spherical buildings}

    \author{Bernd Schulz}
    \date{July 2010}

    \maketitle

    \begin{abstract}
        \noindent Let $\Delta$ be a thick, spherical building equipped with its natural CAT(1) metric and let $M$ be a proper, convex subset of $\Delta$. If $M$ is open or if $M$ is a closed ball of radius $\pi/2$, then $\Lambda$, the maximal subcomplex supported by $\Delta\setminus M$, is $\dim\Lambda$--spherical and non contractible.
    \end{abstract}

    %
%   introduction.tex
%

\noindent In the late 80's, highly connected subcomplexes of spherical buildings gained attention, as P.~Abramenko in \cite{ab1} and H.~Abels in \cite{al1} independently determined the finiteness length of $\mathrm{SL}_n(\finitefield{q}[t ])$ provided that $q$ is big enough compared with $n$. Later on, in \cite{ab4}, P.~Abramenko generalized the result to absolutely almost simple classical $\finitefield{q}$-groups of positive rank over $\finitefield{q}[t]$.

The proofs used the action of these groups on an Euclidean building $X$ and the existence of a filtration of $X$ with highly connected relative links. Since links in Euclidean buildings are spherical buildings, the search for highly connected subcomplexes of spherical buildings was a key problem. Specifically the restrictions on $q$ in the above results have been made to get the desired connectivity properties of the relative links. H.~Behr's characterization of finitely generated and finitely presented S-arithmetic groups over function fields \cite{bh2}, as well as the last chapter of \cite{ab1} suggested that a generalization of the above results without restrictions on $q$ should be possible. Hence, finding a suitable class of subcomplexes of spherical buildings is a necessary step in carrying out the geometric part of finiteness proofs for S-arithmetic groups over function fields. Suitable means: The subcomplexes admit the action of a parabolic subgroup of $\aut(\Delta)$, the fundamental domain for this action is small, they are highly connected regardless of type or thickness of the underlying building, and they have a handy description. In the current paper such a class of subcomplexes will be defined. The results are:

Let $M$ be a subset with property (P) of a geometrically realized spherical building. Then the maximal subcomplex contained in $M$ is called a (P) supported subcomplex.

\medskip\noindent
{\bf Theorem \ref{theorem-a}.} {\it Non empty, closed, coconvex supported subcomplexes of spherical buildings are homotopy Cohen--Macaulay. They are non contractible for thick buildings of dimension at least one.}\medskip

\noindent The subcomplex supported by the complement of a closed (resp.\ an open) ball with radius $\pi/2$ is called an open (resp.\ a closed) hemisphere complex. Note, that closed hemisphere complexes are closed, coconvex supported subcomplexes.

\medskip\noindent
{\bf Theorem \ref{theorem-b}.} {\it Open hemisphere complexes of thick, spherical buildings are non contractible homotopy Cohen--Macaulay complexes.}\medskip

\noindent The current paper is a shortened version of my thesis \cite{schu}. In \cite{buwo1} the upper bound of the finiteness length of S-arithmetic groups over function fields has been determined by K.-U.~Bux and K.~Wortman without local topological arguments, but for rank-1-groups the lower bound was derived by the same authors using theorems \ref{theorem-a} and \ref{theorem-b} \cite{buwo2}. Furthermore the finiteness length of absolutely almost simple $\finitefield{q}$-groups of positive rank over $\finitefield{q}[ t ]$ has been determined by K.-U.~Bux, R.~Gramlich, and S.~Witzel using theorem~\ref{theorem-b} and a generalized version of theorem \ref{theorem-a} \cite{bgw}.
\medskip

\noindent {\bf Acknowledgement.} I would like to thank Peter~Abramenko for his support on writing this paper and also Kai-Uwe~Bux for commenting on earlier versions. 
    %
%   recall.tex
%
%   23.06.2010
%

\section{Notations, conventions and recalls}

\subsection{Simplicial complexes}

We identify simplicial complexes with their geometric realization. The sets of vertices and simplices of a simplicial complex $X$ will be denoted by $\vt(X)$ and $\simp(X)$, respectively. Simplices are open (in their closure). $\st\sigma$ denotes the star of a simplex $\sigma$. The star of a point is the star of the simplex carrying that point. The link $\lk\sigma$ of a simplex $\sigma\in\simp(X)$ is the subcomplex of $X$ whose simplices $\tau$ are disjoint from $\sigma$ but the upper bound $\sigma\cup\tau$ exists. We will write $\lk_X\sigma$ and $\st_X\sigma$ if $X$ is a subcomplex. The join of two simplicial complexes $X$, $Y$ will be denoted by $X*Y$.

\subsection{Construction of spherical simplicial complexes}

We adopt the definitions from D.~Quillen \cite[Section~8]{q1}. A simplicial complex is n--spherical (or spherical) if it is n--dimensional and (n-1)--connected. By convention non empty complexes are (-1)--connected. The empty complex is (-1)--dimensional and (-2)--connected. A simplicial complex $X$ is said to be homotopy Cohen--Macaulay if $\lk\sigma$ is ($\dim X-\dim\sigma-1$)--spherical for every simplex $\sigma\in\simp(X)$ (including $\emptyset\in\simp(X)$).

A commonly used way to show connectivity properties of simplicial complexes is to build these complexes from complexes with known connectivity properties. An overview of the necessary methods can be found in \cite{bj1}. A common method is to build up joins, because the joins of spherical complexes are known to be spherical (\cite[Korollar zu Bemerkung~6]{ab1}, see also \cite{vo} proof of 1.1). The following proposition is also a standard tool. It is a consequence of the Hurewicz isomorphy theorem \cite[p.~398]{sp}, the Mayer-Vietoris sequence of reduced homology, and Van Kampens theorem \cite[6.4.3]{hw}.

\begin{prop}\label{mainconstr}
    Let $I$ be an index set. Let $X$ and $Y_i$ for $i\in I$ be subcomplexes
    of a simplicial complex $Z=X\cup\bigcup_{i\in I}Y_i$. Assume
    $Y_i\cap Y_j\subseteq X$ for all $i,j\in I$ with $i\neq j$.
    \begin{itemize}
        \item[{\rm a)}]
        If $X$ and $Y_i$ are n-connected and $X\cap Y_i$ is (n-1)--connected
        for all $i\in I$, then $Z$ is n-connected.
        \item[{\rm b)}]
        If $Z$ and $X\cap Y_i$ are n--connected for all $i\in I$,
        so is $X$.
    \end{itemize}
\end{prop}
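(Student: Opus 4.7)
\emph{Plan.} The combinatorial key is that the hypothesis $Y_i\cap Y_j\subseteq X$ lets us repackage the multi--piece cover as a two--piece union: fixing any $m\in I$ and setting $Z':=X\cup\bigcup_{i\neq m}Y_i$, one computes
\[
Z'\cap Y_m=(X\cap Y_m)\cup\bigcup_{i\neq m}(Y_i\cap Y_m)=X\cap Y_m.
\]
Everything therefore reduces to two--piece Mayer--Vietoris, van Kampen, and Hurewicz applied to $Z=Z'\cup Y_m$ with the controllable intersection $X\cap Y_m$. By simplicial approximation, any map from a compact space has image in a finite subunion, which reduces both parts to the case $|I|<\infty$; we then induct on $|I|$, with the base case $|I|=0$ being trivial.

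\emph{Part (a).} In the inductive step, pick $m\in I$ and apply the inductive hypothesis to $Z'=X\cup\bigcup_{i\neq m}Y_i$ to get that $Z'$ is $n$-connected. Then $Z=Z'\cup Y_m$ is a union of two $n$-connected subcomplexes meeting in the $(n-1)$-connected set $X\cap Y_m$. Path-connectedness of $Z$ (for $n\geq 0$) follows from non-emptiness of the intersection; van Kampen gives $\pi_1(Z)\cong\pi_1(Z')*_{\pi_1(X\cap Y_m)}\pi_1(Y_m)=1$ for $n\geq 1$; and the reduced Mayer--Vietoris sequence
\[
\tilde H_k(X\cap Y_m)\to\tilde H_k(Z')\oplus\tilde H_k(Y_m)\to\tilde H_k(Z)\to\tilde H_{k-1}(X\cap Y_m)
\]
forces $\tilde H_k(Z)=0$ for $k\leq n$. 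Hurewicz then promotes the homology vanishing to $\pi_k(Z)=0$.

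\emph{Part (b).} Induct on $|I|$ once more, writing $Z=Z'\cup Y_m$ with $Z'\cap Y_m=X\cap Y_m$ now \emph{$n$-connected}. The same Mayer--Vietoris sequence runs in reverse: the vanishing of $\tilde H_\ast(Z)$ and $\tilde H_\ast(X\cap Y_m)$ in degrees $\leq n$ forces $\tilde H_k(Z')=\tilde H_k(Y_m)=0$ for $k\leq n$. Van Kampen identifies $\pi_1(Z)\cong\pi_1(Z')\ast\pi_1(Y_m)$ (the amalgamated factor being trivial), so triviality of $\pi_1(Z)$ forces $\pi_1(Z')=1$; path-connectedness of $Z'$ is handled similarly via the non-empty connected intersection. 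Hurewicz yields $n$-connectedness of $Z'$, and the inductive hypothesis applied to $Z'=X\cup\bigcup_{i\neq m}Y_i$ concludes that $X$ is $n$-connected.

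\emph{Main obstacle.} The Mayer--Vietoris and Hurewicz bookkeeping is mechanical; the delicate points are verifying the open-cover hypotheses of van Kampen by thickening the subcomplexes $Z'$ and $Y_m$ to open neighbourhoods using their cofibration structure, and handling the low-dimensional corners $n=0$ and $n=1$ where Hurewicz does not directly apply and path-components and fundamental groups have to be tracked by hand.
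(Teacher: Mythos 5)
Your core combinatorial observation — that $Y_i\cap Y_j\subseteq X$ forces $Z'\cap Y_m=X\cap Y_m$ for $Z'=X\cup\bigcup_{i\neq m}Y_i$ — is exactly the right reduction, and the two-piece Mayer--Vietoris/van~Kampen/Hurewicz argument you build on it is the standard proof the paper alludes to (it cites precisely those three tools and gives no further detail). For finite $I$ your induction on $\lvert I\rvert$ is sound for both parts, modulo the low-dimensional bookkeeping you flag.

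The gap is in the sentence claiming that compactness ``reduces both parts to the case $\lvert I\rvert<\infty$.'' This works for part~(a): a map of a sphere into $Z$ lands in some $Z_J:=X\cup\bigcup_{j\in J}Y_j$ with $J$ finite, and the finite case applies to $Z_J$ \emph{because the hypotheses of~(a) are inherited by} $(X,\{Y_j\}_{j\in J})$. It does not work for part~(b): the finite case requires $Z_J$ to be $n$--connected, and the hypothesis only gives this for $Z$ itself. The fact that $\pi_k(Z)=\varinjlim_J\pi_k(Z_J)$ vanishes does \emph{not} imply that any $\pi_k(Z_J)$ vanishes (a direct limit can be zero with all terms nonzero). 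A correct route for infinite $I$ in~(b) is to avoid passing through sphericity of $Z_J$ and instead prove that the maps $\tilde H_k(X)\to\tilde H_k(Z_J)$ and $\pi_1(X)\to\pi_1(Z_J)$ are injective for $k\le n$: since $Y_i\cap Y_j\subseteq X$ gives the chain-level splitting $C_*(Z_J,X)\cong\bigoplus_{j\in J}C_*(Y_j,X\cap Y_j)$, the boundary map $H_m(Z_J,X)\to\tilde H_{m-1}(X)$ factors through $\bigoplus_j\tilde H_{m-1}(X\cap Y_j)=0$ for $m\le n+1$, and van~Kampen gives $\pi_1(Z_J)\cong\pi_1(X)\ast\ast_j\pi_1(Y_j)$; combined with $\tilde H_k(Z)=\varinjlim\tilde H_k(Z_J)=0$ and $\pi_1(Z)=1$, these injectivities force $\tilde H_k(X)=0$ and $\pi_1(X)=1$, and Hurewicz finishes. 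With that substitution your argument is complete.
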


\noindent To show $n$--connectedness of a connected simplicial
complex, it is sufficient to prove that every finite subcomplex is
contained in a $n$--connected subcomplex, because continuous
images of spheres and balls are contained in finite subcomplexes
(by compactness). Since the metric topology and the weak topology
coincide on finite subcomplexes, one is allowed to use metric
topology.

\subsection{Spherical buildings}

Geometrically realized spherical buildings $\Delta$ admit a unique metrication, invariant under automorphisms, such that apartments are isometric to the $\dim\Delta$--dimensional standard sphere \cite[II.10 Theorem 10A.4]{brha}. We will denote the corresponding canonical metric by $d$ (or by $d_\Delta$ if it is necessary to avoid confusions). ($\Delta,d$) is complete. Isomorphisms of apartments induce isometries and $d$ is a length metric. According to \cite[Proposition~12.18]{abbr}, retractions onto apartments are distance decreasing. We record the precise statement.

\begin{prop}\label{building:retraction}
    The retraction $\rho=\rho_{\Sigma,C}:\Delta\rightarrow\Sigma$
    onto $\Sigma$ centered at $C$ is distance decreasing for every
    apartment $\Sigma$ and every chamber $C$ of $\Sigma$, i.e.,
    $d(\rho(x),\rho(y))\leq d(x,y) $ for all $x,y\in\Delta$.
    Equality holds if $x\in\overline{C}$.
\end{prop}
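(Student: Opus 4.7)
\medskip\noindent\textbf{Proof proposal.}
The plan is to reduce the claim to the fact, recorded just before the proposition, that apartment isomorphisms induce isometries. Everything rests on one standard combinatorial fact about spherical buildings: for every chamber $D$ of $\Delta$ there is an apartment $\Sigma_D$ containing both $C$ and $D$, and on any such apartment $\Sigma_D$ the retraction $\rho=\rho_{\Sigma,C}$ coincides with the unique apartment isomorphism $\Sigma_D\to\Sigma$ fixing $\Sigma\cap\Sigma_D$ pointwise. Consequently $\rho|_{\Sigma_D}\colon\Sigma_D\to\Sigma$ is itself an isometry of $(\dim\Delta)$-spheres.

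\medskip\noindent
This already handles the equality assertion. Suppose $x\in\overline{C}$; choose any chamber $D$ with $y\in\overline{D}$ and an apartment $\Sigma_D$ through $C$ and $D$. Both $x$ and $y$ lie in $\Sigma_D$, so
\[
d(\rho(x),\rho(y)) = d_{\Sigma_D}(x,y) = d(x,y),
\]
where the second equality uses that the metric of the apartment coincides with the ambient metric on a pair at distance at most $\pi$.

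\medskip\noindent
For the general inequality I would invoke the length-space structure of $\Delta$. Since $(\Delta,d)$ is complete of diameter $\pi$, there is a length-minimising path $\gamma\colon[0,L]\to\Delta$ from $x$ to $y$ with $L=d(x,y)$. I would then partition $[0,L]$ into finitely many sub-intervals $0=t_0<t_1<\dots<t_k=L$ such that each $\gamma([t_{i-1},t_i])$ lies in a single closed chamber $\overline{D_i}$. On an apartment $\Sigma_{D_i}$ through $C$ and $D_i$, the restriction of $\rho$ is isometric, so
\[
d\bigl(\rho(\gamma(t_{i-1})),\rho(\gamma(t_i))\bigr) = d\bigl(\gamma(t_{i-1}),\gamma(t_i)\bigr).
\]
A telescoping triangle inequality together with the length-minimality of $\gamma$ then give
\[
d(\rho(x),\rho(y)) \leq \sum_{i=1}^k d\bigl(\gamma(t_{i-1}),\gamma(t_i)\bigr) = L = d(x,y).
\]

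\medskip\noindent
The main technical obstacle is justifying the finite partition in the last step: in a spherical building that is not assumed locally finite, one must verify that the rectifiable path $\gamma$ crosses only finitely many chambers. This can be extracted from compactness of $\gamma([0,L])$ together with the fact that open stars of simplices form a neighbourhood basis of $\Delta$, refining the cover so that each piece meets strata of codimension $\geq 1$ only at its endpoints. An alternative that sidesteps the geodesic altogether is an inductive argument on gallery distance from $C$, writing $\rho$ as a composition of foldings across walls of $\Sigma$; each folding is a distance non-increasing self-map of the union of two adjacent closed chambers viewed inside a sphere, and composing distance non-increasing maps preserves the property.
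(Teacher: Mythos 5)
The paper does not prove this proposition itself; it is recorded with a citation to \cite{abbr} (Proposition~12.18 there), so there is no in-paper proof to compare against.

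Your treatment of the equality case is fine: if $x\in\overline{C}$ then $x$ and $y$ lie together in an apartment $\Sigma_D$ containing $C$, on which $\rho$ restricts to an apartment isomorphism, hence an isometry.

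For the inequality, the gap you flag is genuine, and neither of your proposed fixes quite closes it. Compactness of $\gamma([0,L])$ does \emph{not} yield a finite cover by closed chambers, since in a non-locally-finite building a single point of $\gamma$ can lie in the closures of infinitely many chambers; and without local compactness even the existence of a length-minimising $\gamma$ is not free. The folding alternative is also incomplete as stated: $\rho\colon\Delta\to\Sigma$ is not literally a composition of foldings (only its restriction to a fixed apartment is), and the argument only compares $d_\Sigma(\rho x,\rho y)$ with the \emph{intrinsic} distance $d_{\Sigma'}(x,y)$; identifying the latter with $d_\Delta(x,y)$ is usually itself deduced from 1-Lipschitzness of $\rho$, so one has to be careful not to argue in a circle.

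The clean route, which I believe is the one in the cited reference, uses the very definition of the canonical length metric in the Bridson--Haefliger sense: $d(x,y)$ is the infimum of lengths of \emph{finite} chains ("$m$-strings") of geodesic segments, each contained in a single closed cell. Thus the finiteness you were trying to extract from compactness is already built into the metric, and geodesics need never be mentioned. Given such a chain from $x$ to $y$ of length $L$, each segment lies in a closed chamber $\overline{D_i}$, which lies in some apartment through $C$ on which $\rho$ is an isometry; so $\rho$ maps the chain to a chain from $\rho(x)$ to $\rho(y)$ of the same length $L$, whence $d(\rho(x),\rho(y))\le L$. Taking the infimum over chains finishes the inequality, and this also subsumes the equality case (when $x\in\overline{C}$ one chooses the chain inside a single apartment through $C$ and both chains realise the same distance). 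If you replace the geodesic/partition step by the $m$-string description, your proof becomes complete.
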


\begin{notation}
    For $x,y\in\Delta$ we put $[x,y] = \{ z\in\Delta\mid
    d(x,y)=d(x,z)+d(z,y)\}$. As usual, we replace a square bracket
    by a round bracket if the corresponding endpoint is left out.
\end{notation}

\begin{prop}\label{building:segments}
    If $d(x,y)<\pi$, then $[x,y]$ lies in any apartment that contains $x$ and $y$. Therefore $[x,y]$ is the unique segment joining $x$ and $y$. If  $d(x,y)=\pi$ then $[x,y]$ is the union of apartments containing $x$ and $y$.
\end{prop}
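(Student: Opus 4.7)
The plan is to leverage the CAT(1) structure of $\Delta$---guaranteed by the hypothesis that apartments embed isometrically as spheres, see Bridson--Haefliger---in which geodesics of length strictly less than $\pi$ are unique.

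For the first statement, let $\Sigma$ be an apartment containing $x$ and $y$. Since $\Sigma\cong\stdsph^{\dim\Delta}$ isometrically and $d(x,y)<\pi$, the shorter great arc from $x$ to $y$ inside $\Sigma$ is a geodesic in $\Sigma$, hence a geodesic in $\Delta$ (because the apartment embedding is isometric). By the CAT(1) uniqueness of short geodesics, this arc coincides with the unique geodesic of length $d(x,y)$ from $x$ to $y$ in $\Delta$, and therefore with $[x,y]$. In particular $[x,y]\subseteq\Sigma$, and $[x,y]$ is the unique segment joining $x$ to $y$.

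For the second statement, given $z\in[x,y]\setminus\{x,y\}$ with $d(x,y)=\pi$, the first statement shows that $[x,z]$ and $[z,y]$ are unique segments of length $<\pi$; concatenating them produces a geodesic $\gamma\colon[0,\pi]\to\Delta$ from $x$ through $z$ to $y$. Choose a chamber $C_1$ with $x\in\overline{C_1}$ whose closure carries a small initial portion of $\gamma$, and a chamber $C_2\ni y$ carrying a small final portion. By the building axiom that any two chambers lie in a common apartment, pick an apartment $\Sigma$ containing $C_1\cup C_2$; then $x,y$ are antipodal in $\Sigma$. For $t$ sufficiently close to $\pi$ one has $\gamma(t)\in\overline{C_2}\subseteq\Sigma$ and $d(x,\gamma(t))=t<\pi$, so the first statement applied to $x,\gamma(t)$ yields $\gamma|_{[0,t]}=[x,\gamma(t)]\subseteq\Sigma$; letting $t\to\pi$ and invoking closedness of $\Sigma$ gives $\gamma\subseteq\Sigma$, in particular $z\in\Sigma$. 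Conversely, every point of an apartment containing the antipodal pair $x,y$ lies on some half great circle joining them, hence is in $[x,y]$. Together these give the desired equality of $[x,y]$ with the union of all apartments containing $x$ and $y$.

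The main obstacle lies in the second part: the choice of $C_1$ (and dually $C_2$) requires that the initial direction of $\gamma$ at $x$ is carried by some chamber of $\Delta$ having $x$ in its closure---a local structural fact about spherical buildings which needs separate justification from the geometry of the star of $x$. Once this is granted together with CAT(1) uniqueness, the retraction of Proposition~\ref{building:retraction} is not strictly needed, though it yields an alternative, more constructive proof of the first part via the chain
\[
    d(x,y)=d(\rho(x),\rho(y))\leq d(x,\rho(z))+d(\rho(z),y)\leq d(x,z)+d(z,y)=d(x,y),
\]
which pins $\rho(z)$ down as the unique point on the great arc at distance $d(x,z)$ from $x$ and hence forces $z=\rho(z)\in\Sigma$.
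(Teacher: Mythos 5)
The paper does not prove this proposition: it records it as a background fact, implicit in the CAT(1) structure established in \cite{brha} together with the distance-decreasing retraction of Proposition~\ref{building:retraction}. Your main argument --- isometric embedding of apartments combined with CAT(1) uniqueness of geodesics of length $<\pi$ --- is the standard justification and is correct. The concern you flag about the initial direction of a geodesic being carried by a chamber is legitimate but handled by the local structure of the piecewise-spherical metric: the closed star of $x$ is a neighborhood of $x$ covered by finitely many closed chambers, so an initial arc of any geodesic emanating from $x$ lies in one of them.

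However, the ``alternative, more constructive'' retraction argument you append has a genuine gap. The chain of (in)equalities forces $d(x,\rho(z))=d(x,z)$, $d(\rho(z),y)=d(z,y)$, and that $\rho(z)$ lies on the great arc in $\Sigma$ at distance $d(x,z)$ from $x$. This pins down $\rho(z)$, not $z$. To conclude $z=\rho(z)$ you would need to know that there is only one point of $\Delta$ at distance $d(x,z)$ from $x$ and $d(z,y)$ from $y$, and that is exactly the geodesic uniqueness (CAT(1)) the argument was supposed to sidestep. The distance-decreasing retraction is indeed the central ingredient --- it is what \cite{brha} uses to prove the CAT(1) inequality for spherical buildings --- but by itself it is not a shortcut past it.
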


\noindent According to \cite[II.1~Proposition~1.4~(1)]{brha}, there are deformations along geodesic segments for spherical buildings.

\begin{propdef}\label{building:deformation}
    For $x,y\in\Delta$ with $d(x,y)<\pi$ and $t\in[0,1]$ let $r_\Delta(x,y,t)\in[x,y]$ be the point defined by $d(x,r_\Delta(x,y,t))=td(x,y)$. The map
    \begin{equation*}
        r_\Delta:\{(x,y)\in\Delta\times\Delta\mid d(x,y)<\pi\} \times[0,1] \maps\Delta;\thickspace (x,y,t)\mapsto r_\Delta(x,y,t)
    \end{equation*}
    is continuous with respect to the metric topology.
\end{propdef}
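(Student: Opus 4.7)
Existence of $r_\Delta(x,y,t)$ is straightforward. By Proposition \ref{building:segments}, the hypothesis $d(x,y)<\pi$ places $[x,y]$ inside any apartment through $x$ and $y$, and any apartment is isometric to the standard sphere, so the point at arclength $t\cdot d(x,y)$ from $x$ along $[x,y]$ is well defined; uniqueness of $[x,y]$ as a subset of $\Delta$ makes the resulting point independent of the apartment chosen.

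For continuity my plan is to regard $(\Delta,d)$ as a complete CAT(1) space, as recorded in the preceding paragraph when citing \cite{brha}: apartments are spherical and glued along isometric subcomplexes, so the whole building satisfies the CAT(1) inequality. In any CAT(1) space there is a unique geodesic between two points at distance less than $\pi$, and that geodesic varies continuously with its endpoints and the parameter $t$ on the open subset $\{d(x,y)<\pi\}\times[0,1]$; this is exactly the content of \cite[II.1~Proposition~1.4~(1)]{brha}, which the excerpt already points to. Applying that result to $(\Delta,d)$ yields the claim.

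For a self-contained derivation from the building material already recalled, I would instead take a convergent sequence $(x_n,y_n,t_n)\to(x,y,t)$, fix an apartment $\Sigma$ containing $[x,y]$ together with a chamber $C\subseteq\Sigma$ satisfying $x\in\overline{C}$, and exploit the retraction $\rho=\rho_{\Sigma,C}$. By Proposition \ref{building:retraction}, $\rho$ is $1$-Lipschitz, and the equality clause forces $\rho$ to restrict to an isometry on every segment emanating from $x$; combined with sphere geometry inside $\Sigma$, this handles the convergence $\rho(r_\Delta(x_n,y_n,t_n))\to r_\Delta(x,y,t)$. The main obstacle is then the estimate $d(r_\Delta(x_n,y_n,t_n),\,\rho(r_\Delta(x_n,y_n,t_n)))\to 0$: the segment $[x_n,y_n]$ need not sit in $\Sigma$, so $\rho$ need not be isometric on it, and controlling the thickness of the triangle $\{x_n,r_n,y_n\}$ relative to its retracted image in $\Sigma$ genuinely requires a CAT(1)-comparison inequality. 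Since that comparison is precisely the tool behind the cited Bridson--Haefliger result, the direct appeal to \cite[II.1~Proposition~1.4~(1)]{brha} is the shortest and most honest path.
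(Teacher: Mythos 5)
Your proposal is correct and takes essentially the same route as the paper: the paper offers no proof beyond the citation in the sentence immediately preceding the statement to Bridson--Haefliger~\cite[II.1~Proposition~1.4~(1)]{brha}, which is precisely the CAT(1)-space result you invoke (unique geodesics of length $<\pi$ varying continuously with endpoints and parameter). Your added remarks on well-definedness via \ref{building:segments} and on why a purely retraction-based argument would still hinge on the CAT(1) comparison are accurate but go beyond what the paper records.
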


\noindent By the uniqueness of $d$, the apartments of $\Delta$ are
spheres, triangulated by the hyperplanes of a finite essential
reflection group, since finite Coxeter complexes can be realized
this way. It is clear that roots are closed hemispheres and that
walls are the corresponding equators. Hence, $\sigma,\tau\in\simp(\Delta)$ are opposite if and only if there are points $x\in \sigma$ and $y\in\tau$ with $d(x,y)=\pi$. Although it would be immediate to call such points opposite, two points at distance $\pi$ are called antipodal.

\begin{notation}
    For a point $x\in\Delta$ the set of its antipodal points
    will be denoted by $\opp(x)$. Furthermore we denote
    $\opp^*(x)=\opp(x)\cup\{x\}$.
\end{notation}

\noindent A subset of $\Delta$ is convex if and only if it is
$\pi$--convex, this means, if and only if it contains the joining segments for every pair of non antipodal points out of it. The complement of a convex set is said to be coconvex.

By \cite[Theorem~2.19]{ti1} a subcomplex of $\Delta$ is convex in the sense of \cite[1.5]{ti1} if and only if its intersection with any apartment is an intersection of roots, within if and only if its intersection with any apartment is $\pi$--convex. Then it is also convex as a subset, since by \ref{building:segments} a subset is convex if and only if its intersection with any apartment is convex.

For a set of simplices $M\subseteq\simp(\Delta)$ we denote the full convex hull in the sense of \cite[1.5]{ti1} by $\conv(M)$. Note, that this differs in general from the metric convex hull.

\begin{propdef}\label{building:geodesicproj}
    Let $x\in\Delta$ be a point. For $y\in\Delta\setminus\opp^*(x)$ exists a unique point $p_xy\in\partial\st x$ such that $p_xy\in[x,y]$ or $y\in[x,p_xy]$. The geodesic projection
    \begin{equation*}
        p_x:\Delta\setminus\opp^*(x)\longrightarrow\partial\st x;\thickspace
        y \longmapsto p_xy
    \end{equation*}
    with center $x$ onto the boundary of $\st x$ is continuous with
    respect to the metric topology.
\end{propdef}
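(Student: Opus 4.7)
The strategy is to reduce to an apartment, where $\overline{\st x}$ is a convex subcomplex of a sphere, construct $p_xy$ there, and then promote continuity via the distance-decreasing retraction of Proposition~\ref{building:retraction}.

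\emph{Existence.} For $y\in\Delta\setminus\opp^*(x)$, we have $d(x,y)<\pi$. Pick an apartment $\Sigma$ containing $x$ and $y$; by Proposition~\ref{building:segments}, $[x,y]\subseteq\Sigma$. Extend $[x,y]$ to the unique great-circle arc $\gamma\colon[0,\pi]\to\Sigma$ with $\gamma(0)=x$ and $\gamma(d(x,y))=y$. Let $\sigma_x$ denote the simplex carrying $x$. Since $\sigma_x$ is a proper non-empty simplex of the Coxeter complex $\Sigma$, the intersection $\overline{\st x}\cap\Sigma$ is a proper convex subcomplex (closed stars in Coxeter complexes are intersections of roots, hence $\pi$-convex by the convexity characterization recalled above). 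Combined with $d(\gamma(s),\gamma(t))=|s-t|<\pi$ on $[0,\pi)$, convexity forces $\gamma^{-1}(\overline{\st x})\cap[0,\pi)$ to be a closed interval $[0,t^*]$ with $0<t^*<\pi$. Set $p_xy:=\gamma(t^*)\in\partial\st x$; the defining condition ``$p_xy\in[x,y]$ or $y\in[x,p_xy]$'' follows by comparing $t^*$ with $d(x,y)$.

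\emph{Uniqueness.} Any candidate $q\in\partial\st x$ lies on a geodesic emanating from $x$ whose initial direction is, since $d(x,y)<\pi$, uniquely the direction toward $y$. If $q\in[x,y]$, then $q\in\gamma$ and the convexity argument above pins $q=\gamma(t^*)$. If $y\in[x,q]$, then $y\in\overline{\st x}$, and the canonical spherical-join decomposition $\overline{\st x}=\overline{\sigma_x}*_{\pi/2}\lk\sigma_x$ determines a unique geodesic continuation of $[x,y]$ inside $\overline{\st x}$ out to $\partial\st x$, independent of the apartment chosen; this continuation must equal $p_xy$.

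\emph{Continuity} is the main obstacle. For $y_n\to y_0$ in $\Delta\setminus\opp^*(x)$, fix an apartment $\Sigma_0$ containing $x$ and $y_0$ and a chamber $C_0\subseteq\Sigma_0$ with $\sigma_x\subseteq\overline{C_0}$; let $\rho:=\rho_{\Sigma_0,C_0}$ be the associated retraction. By Proposition~\ref{building:retraction}, $\rho$ is distance-decreasing and $\rho|_{\overline{C_0}}$ is isometric, so $\rho(x)=x$ and $\rho(y_n)\to y_0$ in $\Sigma_0$. For each $n$ choose an apartment $\Sigma_n$ containing $C_0$ and $y_n$; then $\rho|_{\Sigma_n}\colon\Sigma_n\to\Sigma_0$ is a type-preserving simplicial isomorphism, hence an isometry, and one checks the equivariance $\rho(p_xy_n)=p_x\rho(y_n)$ because the whole construction of $p_xy_n$ takes place intrinsically inside the pair $(\Sigma_n,\overline{\st x}\cap\Sigma_n)$. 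Inside the single apartment $\Sigma_0$, the exit map $z\mapsto p_xz$ is continuous by combining continuity of $r_\Delta$ (Proposition~\ref{building:deformation}) with the piecewise-geodesic structure of $\partial\st x\cap\Sigma_0$, so $p_x\rho(y_n)\to p_xy_0$. The final technical step—transferring this from $\rho(p_xy_n)$ back to $p_xy_n$ itself—uses the isometry $\rho|_{\Sigma_n}$ together with continuity of $r_\Delta$ along the segment $[x,p_xy_n]$ to conclude $p_xy_n\to p_xy_0$ in $\Delta$.
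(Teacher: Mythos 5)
Your overall strategy is the same as the paper's terse argument: reduce existence and uniqueness to a single apartment via Proposition~\ref{building:segments}, and obtain continuity from the distance--decreasing retraction of Proposition~\ref{building:retraction} together with continuity of $p_x$ restricted to apartments. However, the uniqueness step as written contains a genuine error, and the final continuity transfer is left too vague.

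\textbf{The spherical-join claim is false.} You assert that $\overline{\st x}=\overline{\sigma_x}*_{\pi/2}\lk\sigma_x$, i.e.\ that the closed star is a spherical join with the factors at distance $\pi/2$. The decomposition $\overline{\st\sigma_x}=\overline{\sigma_x}*\lk\sigma_x$ is only a \emph{simplicial} join; it is a spherical $\pi/2$-join precisely when $d(u,v)=\pi/2$ for every vertex $u$ of $\sigma_x$ and every vertex $v$ of $\lk\sigma_x$, and by Proposition~\ref{building:join} this forces a nontrivial join decomposition of $\Delta$. In an irreducible building the edges of $\st x$ have length strictly less than $\pi/2$ (by Lemma~\ref{building:edgelength} and Proposition~\ref{building:join}; e.g.\ for $\Delta$ of type $A_2$ and $x$ a vertex the edges have length $\pi/3$). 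So the claimed metric structure does not hold, and the appeal to it to get a canonical geodesic continuation is unjustified. The fact you actually need is elementary and simplicial: for $q\in\partial\st x$ with $d(x,q)<\pi$, both $x$ and $q$ lie in a single closed chamber $\overline{C}$ with $\sigma_x\leq C$, hence $[x,q]\subseteq\overline{C}$ by convexity of $\overline{C}$; inside the spherical simplex $\overline{C}$, the great-circle arc $(x,q)$ is carried by the face $\sigma_x\cup\tau_q$ (where $\tau_q$ carries $q$), since a wall of $\overline{C}$ meets the open arc iff it contains both endpoints. In particular $\sigma_x\leq\sigma_x\cup\tau_q$, so $(x,q)\subseteq\st x$, and at most one point of the geodesic ray from $x$ through $y$ lies on $\partial\st x$. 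Combined with the fact that $\overline{\st x}$ is closed and convex (so the exit point exists and is independent of the chosen apartment, since $[x,q]$ lies in every apartment through $x$ and $q$ by Proposition~\ref{building:segments}), this gives uniqueness without any metric-join hypothesis.

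\textbf{The continuity transfer.} Your reduction shows $\rho(p_xy_n)=p_x\rho(y_n)\to p_xy_0$, but the passage from this to $p_xy_n\to p_xy_0$ is hand-waved (``continuity of $r_\Delta$ along $[x,p_xy_n]$'' does not by itself control $p_xy_n$ itself, since $\rho$ is not injective). A clean finish: $\rho|_{\Sigma_n}$ is an isometry fixing $x$, so $d(x,p_xy_n)=d(x,\rho(p_xy_n))\to d(x,p_xy_0)$; $\partial\st x$ is a compact subcomplex, so any subsequence of $(p_xy_n)$ has a further subsequence converging to some $q\in\partial\st x$; passing the defining relation ``$p_xy_n\in[x,y_n]$ or $y_n\in[x,p_xy_n]$'' to the limit via the continuity of $r_\Delta$ (Proposition~\ref{building:deformation}) shows $q$ satisfies the same condition with respect to $y_0$, and the uniqueness just established gives $q=p_xy_0$; hence the whole sequence converges to $p_xy_0$.
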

\begin{proof}
    The map $p_x$ is well defined by \ref{building:segments}. The continuity follows from \ref{building:retraction} since the restriction $p_x|_\Sigma$ is continuous for any apartment $\Sigma$ that contains $x$.
\end{proof}

\noindent For a simplex $\sigma\in\simp(\Delta)$, the projection to the star of $\sigma$ in the sense of \cite[2.30]{ti1} will be denoted by $\proj_\sigma$. By definition, $\proj_\sigma\tau$ is the maximal simplex of $\st\sigma\cap\conv(\sigma,\tau)$. The geodesic projection and the combinatorial projection are related by the following lemma.

\begin{lemma}\label{building:projection}
    Let $x\in\Delta$ and $y\in\Delta\setminus\opp^*(x)$ be points. Let $\sigma$ and $\tau$ be the simplices carrying $x$ and $y$, respectively. Then $(x,p_xy)$ is contained in $\proj_\sigma\tau$.
\end{lemma}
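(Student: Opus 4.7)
I would begin by picking an apartment $\Sigma$ of $\Delta$ containing both $\sigma$ and $\tau$; such $\Sigma$ exists by a basic building axiom. Since $x,y\in\Sigma$ and $d(x,y)<\pi$, Proposition~\ref{building:segments} gives $[x,y]\subseteq\Sigma$. The Tits convex hull $\conv(\sigma,\tau)$ is a convex subcomplex of $\Sigma$ (an intersection of roots) containing $x$ and $y$, so $[x,y]\subseteq\conv(\sigma,\tau)$ as well.

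The core step is to show that the open arc $(x,p_xy)$ lies in a single open simplex $\mu$ of $\Sigma$. By definition of $p_xy$ the arc is contained in $\st x=\st\sigma$. In the spherical Coxeter complex $\Sigma$, any wall meeting $\st\sigma$ must contain $\sigma$: such a wall contains entirely any open simplex of $\st\sigma$ that it meets, hence the closure of that simplex, which includes $\sigma$. In particular every such wall passes through $x$. But a great-circle arc on $\Sigma$ starting at $x$ meets a wall through $x$ only at $x$ or at the antipode of $x$, and $p_xy$ is not antipodal to $x$. Hence $(x,p_xy)$ crosses no wall and lies in a single open simplex $\mu$ of $\Sigma$, necessarily with $\sigma\le\mu$. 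Since the nonempty set $(x,p_xy)\subseteq\mu$ meets the subcomplex $\conv(\sigma,\tau)$, we get $\mu\subseteq\conv(\sigma,\tau)$, so $\mu\in\st\sigma\cap\conv(\sigma,\tau)$ is a face of $\proj_\sigma\tau$.

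The main obstacle is to upgrade this to the equality $\mu=\proj_\sigma\tau$: one needs to know that the direction from an interior point of $\sigma$ to an interior point of $\tau$ enters the \emph{maximal} simplex of $\st\sigma\cap\conv(\sigma,\tau)$ rather than a proper face. I would verify this by descending to the link $\lk_\Sigma\sigma$, a spherical Coxeter complex of lower dimension whose simplices biject with the simplices of $\Sigma$ having $\sigma$ as a face via $\rho\mapsto\rho/\sigma$. Both $\mu/\sigma$ and $(\proj_\sigma\tau)/\sigma$ admit a characterization in $\lk_\Sigma\sigma$ as the simplex containing the radial direction from $\sigma$ toward an interior point of $\tau$: for $\mu/\sigma$ this follows from the geodesic description above, and for $(\proj_\sigma\tau)/\sigma$ from the analogous link-level reformulation of Tits' combinatorial convex hull. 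The two descriptions therefore coincide, giving $\mu=\proj_\sigma\tau$ and hence $(x,p_xy)\subseteq\proj_\sigma\tau$.
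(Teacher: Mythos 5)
Your plan is sound: pass to an apartment $\Sigma$ containing $\sigma$ and $\tau$, show the open arc $(x,p_xy)$ lies in a single open simplex $\mu$ of $\Sigma$ with $\sigma\le\mu$, deduce $\mu\le\proj_\sigma\tau$ from $[x,y]\subseteq\conv(\sigma,\tau)$, and then argue equality. The first two steps are essentially right. One sentence needs correcting: a great-circle arc issuing from $x$ can lie \emph{entirely inside} a wall through $x$, not merely meet it at $x$ and the antipode. This does not affect your conclusion (an arc contained in a wall is not subdivided by that wall), but the claim as written is false.

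The genuine gap is the last paragraph. To promote $\mu\le\proj_\sigma\tau$ to equality you invoke a ``link-level reformulation of Tits' combinatorial convex hull'' characterizing $(\proj_\sigma\tau)/\sigma$ as the simplex of $\lk_\Sigma\sigma$ containing the radial direction toward $\tau$ --- but that characterization \emph{is} the lemma, transported into $\lk\sigma$ via $\lambda_\sigma$; invoking it is circular. Here is a direct argument that closes the gap. Suppose $\mu<\proj_\sigma\tau$. Since both contain $\sigma$ as a face, there is a wall $H$ of $\Sigma$ with $\mu\subseteq H$ but $\proj_\sigma\tau\not\subseteq H$, and then $\sigma\subseteq H$ as well. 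Now $(x,p_xy)\subseteq\mu\subseteq H$. The set $[x,y]\cup[x,p_xy]$ is a single great-circle arc of length less than $\pi$, one of whose open subarcs lies in the great subsphere $H$; hence the whole arc lies in $H$. Thus $y\in H$, so $\tau\subseteq H$ ($H$ being a subcomplex), so both roots of $\Sigma$ bounded by $H$ contain $\sigma$ and $\tau$. Therefore $\conv(\sigma,\tau)\subseteq H$, forcing $\proj_\sigma\tau\subseteq H$, a contradiction. Substituting this for your final paragraph completes the proof. (The paper states Lemma~\ref{building:projection} without proof, so there is nothing in the text to compare against.)
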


\noindent For $x\in\Delta$ and $y,z\in\Delta\setminus\opp^*(x)$ let
$\angle_x(y,z)$ denote the angle of the triangle $(x,y,z)$ at
$x$. Since links are spherical buildings and the canonical metric is unique, we get following lemma from \cite[Proposition~2.3~(2)]{cl}.

\begin{lemma}\label{building:link-metric}
    The canonical metric on the link of a vertex $x$ is given by
    $\angle_x$.
\end{lemma}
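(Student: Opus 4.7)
The plan is to invoke the uniqueness of the canonical metric on the spherical building $\lk(x)$. Since $\lk(x)$ is itself a spherical building, it admits a unique metric that is invariant under automorphisms and makes every apartment of $\lk(x)$ isometric to a standard $(\dim\Delta-1)$-sphere. So it suffices to verify that $\angle_x$ has these two properties.

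Automorphism invariance is the easy half: any simplicial automorphism of $\lk(x)$ is induced by a simplicial automorphism of $\st x$ fixing $x$, which extends to a $d$-isometry of a neighbourhood of $x$; since $\angle_x$ is defined via $d$, it is preserved. For the apartment calculation, fix an apartment $\Sigma'$ of $\lk(x)$. By definition $\Sigma' = \lk_\Sigma(x)$ for some apartment $\Sigma$ of $\Delta$ through $x$. Because $\Sigma$ is by assumption a standard $(\dim\Delta)$-sphere, the space of directions at $x$ in $\Sigma$ is a standard $(\dim\Delta-1)$-sphere whose intrinsic metric is precisely the angle at $x$ measured inside $\Sigma$. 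Hence for $y,z\in\Sigma\setminus\opp^*(x)$, the angle $\angle_x(y,z)$ computed in $\Sigma$ agrees with the spherical distance between the directions of $y$ and $z$ in $\Sigma' = \lk_\Sigma(x)$.

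The remaining point is to identify $\angle_x(y,z)$ computed in $\Delta$ with the same angle computed in some apartment $\Sigma$ containing $x$ together with initial segments of $[x,y]$ and $[x,z]$. This is where I would use Proposition \ref{building:retraction}: pick a chamber $C$ of $\Sigma$ with $x\in\overline{C}$; the retraction $\rho_{\Sigma,C}$ is distance-decreasing on all of $\Delta$ and an isometry on $\overline{C}$, hence on a whole neighbourhood of $x$ contained in $\st x$. Applied to short initial segments of $[x,y]$ and $[x,z]$ --- produced via the deformation $r_\Delta$ of \ref{building:deformation} for small $t$, so that $r_\Delta(x,y,t)$ and $r_\Delta(x,z,t)$ lie in $\st x$ --- this reduces the computation of $\angle_x$ in $\Delta$ to the computation in the apartment $\Sigma$, which by the previous paragraph is the spherical distance on $\lk_\Sigma(x)$.

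The main obstacle is precisely that the two segments $[x,y]$ and $[x,z]$ need not lie in a common apartment globally; the argument has to be purely local at $x$, working with the germs of the segments in $\st x$. This is what the cited reference \cite[Proposition~2.3~(2)]{cl} packages: in a CAT(1) polytopal complex the angle metric at a point realises the intrinsic metric on the space of directions. Applied here, together with the uniqueness of the canonical metric on the spherical building $\lk(x)$, it delivers the equality $d_{\lk(x)} = \angle_x$.
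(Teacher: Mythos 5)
Your overall plan matches the paper's: the paper disposes of this lemma in one sentence by citing the uniqueness of the canonical metric together with \cite[Proposition~2.3~(2)]{cl}, and you invoke the same two ingredients. So the approach is the same; the difference is that you try to flesh out the local argument, and that is where a mistake slips in.

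The sentence ``the retraction $\rho_{\Sigma,C}$ is \dots an isometry on $\overline{C}$, hence on a whole neighbourhood of $x$ contained in $\st x$'' is false. Proposition~\ref{building:retraction} gives equality $d(\rho(u),\rho(v))=d(u,v)$ only when at least one of $u,v$ lies in $\overline{C}$; a neighbourhood of a boundary point $x$ of $C$ meets many chambers outside $\Sigma$, and $\rho$ folds those onto $\Sigma$, strictly decreasing some distances. The retraction cannot be a local isometry near $x$ unless $\Delta=\Sigma$. You do not actually need the retraction here. For $y,z\in\st x$, the chambers $C_y,C_z$ carrying $y,z$ both have $x$ in their closure; by the building axioms there is an apartment $\Sigma$ containing both chambers, hence containing $x$, $y$, $z$, and indeed the initial segments $[x,y]\subseteq\overline{C_y}$ and $[x,z]\subseteq\overline{C_z}$ by convexity of closed chambers. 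Since apartments are isometrically embedded, the comparison angles that define $\angle_x(y,z)$ are the same computed in $\Delta$ or in $\Sigma$, and in $\Sigma$ (a round sphere) they equal the spherical distance between the two directions in $\lk_\Sigma(x)$. That ``two segments need not lie in a common apartment globally'' is not an obstruction to the local angle computation; what \cite[Proposition~2.3~(2)]{cl} really supplies is the metric identification of the combinatorial link with the space of directions, which is what you then feed into the uniqueness of the canonical metric. With the flawed sentence removed and replaced by the common-apartment observation, your proof is correct and in the same spirit as the paper's.
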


\noindent The spherical law of cosines \cite[1.2~Proposition~2.2]{brha} relates the length of a side in a spherical triangle to its opposite angle. Using \ref{building:retraction}, geodesic projection, and additionally \ref{building:segments} for the "only if"-part, one gets:

\begin{prop}[{\bf Spherical law of cosines}]\label{building:sph-cos-law}
    Let $x\in\Delta$ be a point and let $y,z\in\Delta\setminus\opp^*(x)$. Then:
    \begin{equation*}
        \cos d(y,z)\leq\cos d(x,y)\cos d(x,z)+\sin d(x,y)\sin
        d(x,z)\cos\angle_x(y,z)
    \end{equation*}
    Equality holds if and only if $x$, $y$, and $z$ are contained in an apartment.
\end{prop}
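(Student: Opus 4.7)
The plan is to reduce the claim to the classical spherical law of cosines on a single apartment (which is a standard sphere) by means of a suitably chosen retraction. Since $y\notin\opp^*(x)$ we have $d(x,y)<\pi$, so by \ref{building:segments} there is an apartment $\Sigma$ containing $\{x,y\}$. Fix a chamber $C$ of $\Sigma$ with $x\in\overline{C}$ and set $\rho:=\rho_{\Sigma,C}$. Then $\rho$ is the identity on $\Sigma$, so $\rho(x)=x$ and $\rho(y)=y$, and by \ref{building:retraction} (equality clause, since $x\in\overline{C}$) we get $d(x,\rho(z))=d(x,z)$, while $\rho$ being merely distance decreasing gives $d(y,\rho(z))\le d(y,z)$.

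Next I would argue that $\angle_x(y,z)=\angle_x(y,\rho(z))$. The retraction $\rho$ is a type-preserving simplicial map which is an isomorphism on the closure of every chamber; in particular it is an isomorphism, hence an isometry, from each chamber in the star of the simplex carrying $x$ onto its image chamber in $\Sigma$. Therefore $\rho$ commutes with the geodesic projection $p_x$ of \ref{building:geodesicproj}, i.e.\ $\rho(p_xz)=p_x\rho(z)$. By \ref{building:projection} the germ of $[x,z]$ at $x$ is determined by $p_xz$, and by \ref{building:link-metric} the link metric measures angles at $x$, so angles based at $x$ are unchanged by $\rho$. The triangle $(x,y,\rho(z))$ now lies in $\Sigma\cong\stdsph^{\dim\Delta}$, where the classical spherical law of cosines is an equality:
\begin{equation*}
    \cos d(y,\rho(z))=\cos d(x,y)\cos d(x,z)+\sin d(x,y)\sin d(x,z)\cos\angle_x(y,z).
\end{equation*}
Since both $d(y,z)$ and $d(y,\rho(z))$ lie in $[0,\pi]$, where $\cos$ is decreasing, $d(y,\rho(z))\le d(y,z)$ yields $\cos d(y,z)\le\cos d(y,\rho(z))$, which is the asserted inequality.

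For the equality statement, the ``if'' direction is immediate: if $x,y,z$ lie in a common apartment, take that apartment as $\Sigma$; then $\rho(z)=z$ and both inequalities above are equalities. For the converse, equality in the main inequality forces $d(y,z)=d(y,\rho(z))$. Assume first $d(y,z)<\pi$; by \ref{building:segments} the segment $[y,z]$ is unique and lies in every apartment containing $y$ and $z$. The image $\rho([y,z])$ is a path from $y$ to $\rho(z)$ of length $d(y,z)=d(y,\rho(z))$ and therefore a geodesic, which by \ref{building:segments} applied inside $\Sigma$ must equal $[y,\rho(z)]\subset\Sigma$. Length preservation forces $\rho$ to be an isomorphism on each chamber of an apartment $\Sigma'\supset\{y,z\}$ traversed by $[y,z]$; gluing those chambers to $\Sigma$ along their images produces an apartment containing $x$, $y$, and $z$. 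The degenerate case $d(y,z)=\pi$ is handled by the second clause of \ref{building:segments}: $[y,z]$ is a union of apartments, and any one of them that contains the (unique) segment $[x,y]$ works.

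The main obstacle is the equality case: the inequality itself is a direct consequence of the distance-decreasing and locally isometric properties of $\rho$ together with spherical trigonometry on a sphere, while extracting a single apartment through $x,y,z$ from the metric equality $d(y,z)=d(y,\rho(z))$ requires a careful chamber-by-chamber analysis of the retraction along $[y,z]$.
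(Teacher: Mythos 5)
Your overall route — pulling $z$ into a fixed apartment $\Sigma\ni x,y$ via a retraction $\rho=\rho_{\Sigma,C}$ with $x\in\overline{C}$, applying the classical spherical law of cosines inside $\Sigma$, and then comparing — is precisely the route the paper indicates for this proposition (it points to \ref{building:retraction}, geodesic projection, and \ref{building:segments} for the only-if part). However, there is a genuine gap in the key intermediate step, the claim that $\angle_x(y,z)=\angle_x(y,\rho(z))$.

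What your argument correctly establishes is that $\rho$ commutes with $p_x$. But the induced map on directions at $x$, i.e.\ the restriction of $\rho$ to $\lk x$, is itself the building retraction of $\lk x$ onto $\lk_\Sigma x$ centered at $\lk_C x$, and this is only distance \emph{decreasing}. Since $\angle_x(y,z)=d_{\lk x}(p_xy,p_xz)$ by \ref{building:link-metric}, and $\rho$ fixes $p_xy$, all one can conclude in general is $\angle_x(y,\rho(z))\leq\angle_x(y,z)$. That goes the wrong way: it yields $\cos d(y,\rho(z))\geq\cos d(x,y)\cos d(x,z)+\sin d(x,y)\sin d(x,z)\cos\angle_x(y,z)$, which together with $\cos d(y,z)\leq\cos d(y,\rho(z))$ is inconclusive. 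The angle can genuinely drop: in an $A_2$-building take $x,y\in\Sigma$ with $d(x,y)=2\pi/3$, let $C$ be the edge of $\Sigma$ at $x$ \emph{not} traversed by $[x,y]$, and let $z\notin\Sigma$ be a vertex adjacent to $x$; then $\angle_x(y,z)=\pi$ while $\angle_x(y,\rho(z))=0$.

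The fix is to choose $C$ so that, in addition to $x\in\overline{C}$, an initial segment of $[x,y]$ lies in $\overline{C}$ (possible since $[x,y]\subseteq\Sigma$). Then for $y'\in[x,y]$ near $x$ one has $y'\in\overline{C}$, so by the equality clause of \ref{building:retraction} the retraction preserves $d(y',\cdot)$ as well as $d(x,\cdot)$; comparing the triangles $(x,y',z')$ and $(x,y',\rho(z'))$ for $z'\in[x,z]$ near $x$ (same three side lengths, hence same comparison angle at $x$) and letting $y',z'\to x$ gives $\angle_x(y,\rho(z))=\angle_x(y,z)$, and the rest of your derivation of the inequality then goes through. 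For the equality case, your outline is plausible in spirit, but the phrase ``gluing those chambers to $\Sigma$'' hides the actual work of producing an apartment through all three of $x$, $y$, $z$ and would need to be made precise; the degenerate case $d(y,z)=\pi$ is correctly delegated to the second clause of \ref{building:segments}.
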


\noindent If $\Delta=\Delta_1*\Delta_2$ is a reducible spherical building, then $\Delta$ is a spherical join, i.e. the inclusions $\Delta_k\subset\Delta$ are isometric embeddings and the distance of points lying in different factors is $\pi/2$. Hence, chambers of reducible spherical buildings contain points at distance $\pi/2$.

\begin{lemma}\label{building:edgelength}
    The length of edges joining two vertices does not exceed $\pi/2$.
\end{lemma}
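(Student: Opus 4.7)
My plan is to induct on the dimension $n = \dim\Delta$. The base cases $n \leq 1$ are elementary: when $n = 0$ there are no edges, and when $n = 1$ every apartment is a circle subdivided into $2m$ equal arcs by a dihedral reflection group of order $2m$ with $m \geq 2$; hence every chamber is an arc of length $\pi/m \leq \pi/2$, and by definition every edge of $\Delta$ lies in some apartment.

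Now suppose $n \geq 2$ and the claim holds in dimension $n-1$. Given an edge $\{x,y\} \in \simp(\Delta)$, choose a chamber $C \supseteq \{x,y\}$ and, using $\dim C = n \geq 2$, a third vertex $z$ of $C$, so that $\{x,y,z\}$ is a 2-simplex. For each $v \in \{x,y,z\}$, the pair of remaining vertices spans a 1-simplex of $\lk v$. Since $\lk v$ is a spherical building of dimension $n-1$, the induction hypothesis bounds the length of this 1-simplex by $\pi/2$ in the canonical metric of $\lk v$. Lemma~\ref{building:link-metric} identifies this length with the corresponding angle of the triangle $(x,y,z)$, yielding
\begin{equation*}
    \angle_x(y,z),\ \angle_y(x,z),\ \angle_z(x,y) \;\leq\; \pi/2.
\end{equation*}

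Because $\{x,y,z\}$ lies in a common apartment, Proposition~\ref{building:sph-cos-law} holds with equality at all three vertices, so I may appeal to ordinary spherical trigonometry on $S^n$. The remaining purely trigonometric fact is that a spherical triangle with all three angles at most $\pi/2$ has all three sides at most $\pi/2$. This is the polar-dual of the spherical law of cosines, which in our notation reads
\begin{equation*}
    \cos d(x,y) \;=\; \frac{\cos\angle_z(x,y) + \cos\angle_x(y,z)\cos\angle_y(x,z)}{\sin\angle_x(y,z)\sin\angle_y(x,z)}.
\end{equation*}
Each cosine in the numerator is non-negative by the angle bound, so $\cos d(x,y) \geq 0$, and the conclusion $d(x,y) \leq \pi/2$ follows.

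The main obstacle is this last step. One cannot simply feed $\angle_z(x,y) \leq \pi/2$ into the side form of \ref{building:sph-cos-law} applied at $z$, because the term $\cos d(x,z)\cos d(y,z)$ need not be non-negative without the very bound we are proving for the other two edges $\{x,z\}$ and $\{y,z\}$. Passing to the angular (polar-dual) form of the cosine law restores the symmetry among the three vertices of the triangle and breaks this apparent circularity, at the cost of verifying (inside the fixed apartment, where spherical trigonometry applies) that the angular law is a consequence of~\ref{building:sph-cos-law}.
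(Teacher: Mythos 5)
Your proof is correct and follows essentially the same route as the paper: induction on dimension, using \ref{building:link-metric} to identify edge lengths in links with angles of a spherical triangle, then invoking the fact that a spherical triangle with no obtuse angle has no side longer than $\pi/2$. The only difference is that the paper states this last trigonometric fact without proof, whereas you make it explicit via the dual (polar) law of cosines---a reasonable and correct way to justify the step, and your observation that the side form of \ref{building:sph-cos-law} alone does not immediately close the argument is well taken.
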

\begin{proof}
    We use induction on $\dim\Delta$. The case $\dim\Delta=1$ is clear. Suppose $\dim\Delta>1$. Let $x, y,$ and $z$ be vertices of a common chamber. By the induction hypothesis and \ref{building:link-metric} the angles $\angle_x(y,z)$, $\angle_y(x,z)$ and $\angle_z(x,y)$ of the triangle $(x,y,z)$ are not obtuse. We therefore get the assertion, since the edges of a spherical triangle without obtuse angles can not be longer than $\pi/2$.
\end{proof}

\begin{cor}\label{building:chamberdiameter}
    The diameter of closed chambers does not exceed $\pi/2$.
\end{cor}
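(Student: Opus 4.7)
The plan is to reduce the bound to an elementary inner--product computation in a Euclidean model of the apartment. First I realize an apartment $\Sigma$ containing $C$ as the unit sphere $\stdsph^n \subset \realnumbers^{n+1}$, where $n = \dim\Delta$; this is legitimate because, as noted above, apartments are standard spheres on which a finite essential reflection group acts by isometries. In this picture $C$ is a fundamental chamber of that reflection group, so $\overline{C}$ is the intersection of $\stdsph^n$ with the simplicial convex cone in $\realnumbers^{n+1}$ spanned by the vertices $v_0,\ldots,v_n$ of $C$, regarded as unit vectors.

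Lemma \ref{building:edgelength} translates into $\langle v_i, v_j \rangle = \cos d(v_i, v_j) \ge 0$ for all $i, j$. Given arbitrary $x, y \in \overline{C}$, I write $x = \sum_i \lambda_i v_i$ and $y = \sum_j \mu_j v_j$ with $\lambda_i, \mu_j \ge 0$. Since $x$ and $y$ are unit vectors,
\begin{equation*}
    \cos d(x,y) = \langle x, y\rangle = \sum_{i,j}\lambda_i\mu_j\langle v_i,v_j\rangle \ge 0,
\end{equation*}
which yields $d(x,y) \le \pi/2$, as required.

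There is essentially no obstacle here: once Lemma \ref{building:edgelength} is in hand, the corollary is a short calculation, and the only thing to verify carefully is the description of $\overline{C}$ as the intersection of the sphere with the simplicial cone spanned by its vertices. An alternative intrinsic route would apply the spherical law of cosines (Proposition \ref{building:sph-cos-law}) to the triangle $(v, x, y)$ for a vertex $v$ of $C$ and induct on $\dim\Delta$ via Lemma \ref{building:link-metric}, but such an argument must simultaneously control $d(v,x)$, $d(v,y)$ and $\angle_v(x,y)$ under the inductive hypothesis, and in the end it amounts to the same computation; the Euclidean argument above is cleaner.
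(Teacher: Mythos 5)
Your argument is correct and, since the paper states this corollary without any proof, it supplies exactly the elementary computation the author leaves implicit: realize $\overline{C}$ inside an apartment as the intersection of the unit sphere with the simplicial cone on the vertices $v_0,\ldots,v_n$, and observe that non-negativity of the pairwise inner products $\langle v_i,v_j\rangle$ (Lemma~\ref{building:edgelength}) propagates to $\langle x,y\rangle\ge 0$ for arbitrary non-negative combinations. The only additional fact you rely on is that the restriction of $d$ to an apartment is the standard round metric, which the paper guarantees (apartments are isometric to standard spheres and are metrically convex by Proposition~\ref{building:segments}), so the reduction to a single apartment is legitimate.
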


\begin{prop}\label{building:join}
    Let $C$ be a chamber of $\Delta$ and $x,y\in\overline{C}$. If $d(x,y)=\pi/2$, then $\Delta=\Delta_x*\Delta_y$ is a spherical join, s.t.\ $x\in\Delta_x$ and $y\in\Delta_y$. I.e. a spherical building is reducible if and only if there is a chamber containing points at distance $\pi/2$.
\end{prop}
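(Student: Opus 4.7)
My plan is to split the proof into two stages: first work inside an apartment $\Sigma$ containing $C$ and show that $\Sigma$ itself decomposes as a spherical join $\Sigma=\Sigma_x*\Sigma_y$ with $x\in\Sigma_x$ and $y\in\Sigma_y$, then extend the decomposition across $\Delta$ via the type function.

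For the apartment stage I would realize $\Sigma$ as the unit sphere $S(V)$ of the reflection representation of its Weyl group $(W,S)$, so that $\bar C$ is the intersection of $S(V)$ with the closed cone generated by the fundamental weights $\{\omega_s\}_{s\in S}$. The key classical input is that for an irreducible finite Coxeter system the Gram matrix of the fundamental weights has strictly positive entries (equivalently, $A^{-1}$ has positive entries for the Cartan matrix $A$); geometrically, any two points in the closed fundamental chamber of an irreducible factor lie at spherical distance strictly less than $\pi/2$. Decomposing $(W,S)$ into its irreducible components $(W_1,S_1)\times\cdots\times(W_k,S_k)$ gives $V=\bigoplus V_i$ and $\Sigma=S(V_1)*\cdots*S(V_k)$. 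Writing $x=(c_i^x p_i^x)_i$ and $y=(c_i^y p_i^y)_i$ with $p_i^\bullet$ in the fundamental chamber of $W_i$ on $S(V_i)$, the spherical-join distance formula yields
\[
\cos d(x,y)=\sum_{i=1}^k c_i^x c_i^y\cos d_i(p_i^x,p_i^y).
\]
Since each $\cos d_i(p_i^x,p_i^y)$ is strictly positive by the irreducible case, the hypothesis $d(x,y)=\pi/2$ forces $c_i^x c_i^y=0$ for every $i$. Partitioning the indices according to whether they support $x$ or $y$ produces orthogonal $W$-invariant summands $V_x$ and $V_y$ with $V=V_x\oplus V_y$, $x\in S(V_x)$ and $y\in S(V_y)$, hence the desired join $\Sigma=\Sigma_x*\Sigma_y$.

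To globalize, I would translate the splitting $V=V_x\oplus V_y$ into a partition $S=S_x\sqcup S_y$ of the type set whose parts sit in distinct connected components of the Coxeter diagram and hence commute element-wise. Using the globally defined type function on $\Delta$, let $\Delta_x$ (respectively $\Delta_y$) be the subcomplex spanned by the vertices of type in $S_x$ (respectively $S_y$); by the apartment stage $x\in\Delta_x$ and $y\in\Delta_y$. For any chamber $D$ the building axioms produce an apartment $\Sigma'\supset C\cup D$; applying the first stage inside $\Sigma'$ reproduces the same type partition (it depends only on the Coxeter diagram), so $D$ splits compatibly as $D_x*D_y$ with $D_x\subset\Delta_x$ and $D_y\subset\Delta_y$. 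Any two points $u\in\Delta_x$ and $v\in\Delta_y$ lie in a common apartment whose stage-one decomposition forces $d(u,v)=\pi/2$, and one concludes $\Delta=\Delta_x*\Delta_y$.

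The main difficulty is concentrated in the apartment stage: converting the purely metric equality $d(x,y)=\pi/2$ into an algebraic splitting of $W$ rests on the strict positivity of the Gram matrix of fundamental weights for irreducible types, after which the spherical-join distance formula delivers the decomposition. The globalization is then essentially formal, invoking the global type function on $\Delta$ and the standard correspondence between disconnected Coxeter diagrams and join decompositions of spherical buildings.
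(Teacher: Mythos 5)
Your argument is correct, but it takes a genuinely different route from the paper's. The paper works entirely inside the building with spherical trigonometry: it uses the spherical law of cosines on the triangle $(x,u,y)$, together with the bound $\angle_u(x,y)\leq\pi/2$ (from the chamber-diameter bound applied in $\lk u$), to show that every vertex $u$ of $C$ lies at distance $\pi/2$ from $x$ or from $y$; it then splits $C$ into the two complementary faces $\sigma$, $\tau$ accordingly, shows $d(u,v)=\pi/2$ for $u\in\vt(\sigma)$, $v\in\vt(\tau)$ by a second application of the law of cosines, and hands the resulting combinatorial orthogonality condition to Buekenhout's product theorem. Your proof, by contrast, descends to the reflection representation of a single apartment: you invoke the classical fact that the Gram matrix of fundamental weights of an \emph{irreducible} finite Coxeter system is strictly positive (so the closed fundamental chamber of each irreducible factor has diameter $<\pi/2$), read off from the spherical-join distance formula that $d(x,y)=\pi/2$ forces $x$ and $y$ to be supported on disjoint sets of irreducible factors, and then globalize via the type function and the standard join decomposition for buildings of reducible type.

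What each buys: the paper's route is shorter and entirely self-contained within the metric theory already set up (Propositions \ref{building:chamberdiameter}, \ref{building:link-metric}, \ref{building:sph-cos-law}), needing only Buekenhout's theorem from the outside. Your route makes the algebraic mechanism transparent — the metric condition $d(x,y)=\pi/2$ is converted directly into a block decomposition of the reflection representation — but it imports the Gram-matrix positivity as an external classical input. It is worth noting that this positivity fact is, up to translation between apartment and building, precisely the \emph{strict} chamber-diameter bound that the paper derives as a \emph{consequence} of this very proposition (see the remark after \ref{hc:join}); the two approaches thus traverse the same equivalence in opposite directions. Your globalization step is a bit compressed (one should spell out that the partition $S=S_x\sqcup S_y$ is independent of the chosen apartment because it is pinned down by $x$ and $y$ alone, and that the claimed join $\Delta=\Delta_x*\Delta_y$ is then the standard decomposition of a building of reducible type), but the argument is sound.
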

\begin{proof}
    The assertion follows immediately from the Buekenhout product theorem \cite[Theorem~7.3]{bue}, once we show that $C$ has two complementary faces $\sigma$ carrying $x$ and $\tau$ carrying $y$ such that $d(u,v)=\pi/2$, for all $u\in\vt(\sigma)$ and $v\in\vt(\tau)$.

    Let $u$ be some vertex of $C$. By \ref{building:chamberdiameter} we get $d(x,u)\leq\pi/2$ and $d(y,u)\leq\pi/2$. Furthermore $\angle_u(x,y)$ does not exceed $\pi/2$ by \ref{building:chamberdiameter} and \ref{building:link-metric}. Hence, using the spherical law of cosines on the triangle $(x,u,y)$ we obtain $d(x,u)=\pi/2$ or $d(y,u)=\pi/2$.

    Now let $\sigma$ be the face of $C$ whose vertices have distance less than $\pi/2$ to $x$ and let $\tau=C\setminus\sigma$ be the complementary face. Then $x$ lies in $\overline{\sigma}$ and $y$ is a point of $\overline{\tau}$, because the distances from a point to the vertices of the simplex carrying that point are less than $\pi/2$. For $u\in\vt(\sigma)$ and $v\in\vt(\tau)$, we know that $d(x,v)=\pi/2$, $d(x,u)<\pi/2$, and $\angle_u(x,v)\leq\pi/2$. Hence, $d(v,u)=\pi/2$ by the spherical law of cosines.
\end{proof} 
    %
%   cccomp.tex
%
%   23.06.2010
%

\section{Coconvex supported subcomplexes}

\begin{definition}
    Let $\Lambda$ be a simplicial complex and let $M$ be a subset of $\Lambda$. By $\Lambda(M)$ we denote the maximal subcomplex of $\Lambda$ contained in $M$. We shorten $\Lambda'(M)=\Lambda'(M\cap\Lambda')$ for a subcomplex $\Lambda'\subseteq\Lambda$. The set $M$ is said to be a support of $\Lambda(M)$. A subcomplex $\Lambda'\subseteq\Lambda$ is a (P) supported subcomplex (of $\Lambda$) if and only if $\Lambda'$ admits a support with property (P).
\end{definition}

\noindent The objects of our observation are the connectedness properties of coconvex supported subcomplexes of spherical buildings. This section is dedicated to the proof of the first main result.

\begin{maintheorem}\label{theorem-a}
    Non empty, closed, coconvex supported subcomplexes of spherical buildings are homotopy Cohen--Macaulay. They are non contractible for thick buildings of dimension at least one.
\end{maintheorem}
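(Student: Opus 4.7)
The plan is to prove both parts by induction on $n=\dim\Lambda$, writing the support as $M=\Delta\setminus K$ for a non-empty open convex subset $K\subsetneq\Delta$. The Cohen--Macaulay assertion splits into (i) for every non-empty $\sigma\in\simp(\Lambda)$, the link $\lk_\Lambda\sigma$ is $(n-\dim\sigma-1)$--spherical, and (ii) $\Lambda$ itself is $(n-1)$--connected. Part (i) will follow from a ``localization'' that identifies $\lk_\Lambda\sigma$ as a subcomplex of the same type inside the lower-dimensional spherical building $\lk_\Delta\sigma$ and then invokes the inductive hypothesis. Part (ii) and the non-contractibility statement will require a direct geometric construction using apartments together with Proposition~\ref{mainconstr}.

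For step~(i), fix $\sigma\in\simp(\Lambda)$ with an interior point $x\in\sigma$, and define the ``shadow''
\[
    K_\sigma \;=\; p_x\bigl(K\setminus\opp^*(x)\bigr)\;\cap\;\lk_\Delta\sigma
\]
inside $\lk_\Delta\sigma\subseteq\partial\st x$, using the geodesic projection $p_x$ of Proposition/Definition~\ref{building:geodesicproj}. Openness of $K_\sigma$ follows from continuity of $p_x$ together with openness of $K$; convexity follows by placing two given points of $K$ into a common apartment through $x$ and applying the convexity of $K$ with the spherical law of cosines (Proposition~\ref{building:sph-cos-law}). Using Lemma~\ref{building:projection} and the closedness of $\Delta\setminus K$, I would show that $\tau\in\lk_\Lambda\sigma$ iff $\overline\tau\cap K_\sigma=\emptyset$, so $\lk_\Lambda\sigma$ is a closed coconvex supported subcomplex of the lower-dimensional spherical building $\lk_\Delta\sigma$. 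The inductive hypothesis finishes step~(i).

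For step~(ii), I would apply Proposition~\ref{mainconstr}(a) to a decomposition $\Lambda=X\cup\bigcup_iY_i$, where $X=\Lambda\cap\Sigma$ for a suitable apartment $\Sigma$ and the $Y_i$ are built from stars of chambers of $\Lambda$ outside $\Sigma$, attached in a controlled order. The complex $X$ is the maximal subcomplex of the triangulated sphere $\Sigma$ avoiding the open convex set $K\cap\Sigma$; I expect to show it is $(n-1)$--connected by a deformation retraction along $r_\Delta$ (Proposition/Definition~\ref{building:deformation}) onto a closed hemisphere of $\Sigma$ opposite to an interior point of $K\cap\Sigma$. Each $Y_i$ is chosen contractible, and the pairwise intersections $X\cap Y_i$ and $Y_i\cap Y_j$ are controlled by links, so step~(i) supplies their $(n-2)$--connectivity and closes the argument.

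The non-contractibility under thickness with $\dim\Delta\geq 1$ is the main obstacle. I would produce an explicit non-zero top cycle in $H_n(\Lambda)$: starting from a top-dimensional disk $H\subseteq\Sigma\cap\Lambda$ inside a well-chosen apartment $\Sigma$, use the thickness of $\Delta$ to iteratively replace panels of $H$ by panels of chambers of $\Delta\setminus\Sigma$ lying in $\Lambda$, producing a second top-dimensional disk $H'\subseteq\Lambda$ sharing precisely the boundary of $H$, so that $H\cup H'$ is a topological $n$--sphere in $\Lambda$. The hardest part is showing $[H\cup H']\neq 0$ in $H_n(\Lambda)$: I would establish this by constructing a retraction of $\Lambda$ onto $H\cup H'$ from the apartment retractions of Proposition~\ref{building:retraction} combined with the geodesic projection of Proposition/Definition~\ref{building:geodesicproj}, and then checking that the induced map on top homology is surjective.
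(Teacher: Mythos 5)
Your step~(i) has the right shape---links in $\Lambda$ should be closed coconvex supported subcomplexes of $\lk_\Delta\sigma$, so the Cohen--Macaulay property follows inductively once $(n-1)$--connectivity is settled---and this is indeed how the paper argues (Lemma~\ref{ccc:proj-openconv-on-link}, Corollary~\ref{ccc:link}). But the shadow $K_\sigma=p_x\bigl(K\setminus\opp^*(x)\bigr)$ is the wrong set: $p_x$ sends \emph{every} non-antipodal point of $\Delta$ to $\partial\st x$, so $K_\sigma$ can be nonempty even when $K$ is disjoint from $\overline{\st\sigma}$, in which case $\lk_\Lambda\sigma=\lk_\Delta\sigma$ but your condition $\overline\tau\cap K_\sigma=\emptyset$ wrongly discards simplices. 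The correct shadow is $p_x(K\cap\st x)$, and proving its openness and convexity is exactly what the paper's Lemma~\ref{ccc:proj-openconv-on-link} is for (via $\st x\subseteq[x,y]$ for an antipode $y$ of $x$). You also never use Observation~\ref{ccc:supp-comp-homotopy}, that $\Lambda$ is homotopy equivalent to the coconvex set $\Delta\setminus K$ itself; that reduction is what permits arguing with the genuinely convex geometry rather than with the subcomplex directly.

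The larger gap is step~(ii). A decomposition $\Lambda=X\cup\bigcup_iY_i$ with $X=\Lambda\cap\Sigma$ and the $Y_i$ taken to be ``stars of chambers of $\Lambda$ outside $\Sigma$'' cannot meet the hypotheses of Proposition~\ref{mainconstr}: two closed chamber stars sharing a panel outside $\Sigma$ violate $Y_i\cap Y_j\subseteq X$, and the overlaps $X\cap Y_i$ are not links of simplices, so your step~(i) induction gives no control on their connectivity. The paper uses a genuinely different decomposition: cover any finite subcomplex of $\Lambda$ by apartments $\Sigma_1,\dots,\Sigma_m$ all containing a fixed chamber $C$ with $C\cap K\neq\emptyset$, ordered by von Heydebreck's lemma so that each $\Psi_r=\Sigma_r\cap(\Sigma_1\cup\dots\cup\Sigma_{r-1})$ is a union of roots; then $\Psi_r\cap(\Delta\setminus K)$ is shown to be $(n-2)$--connected by geodesically projecting from a point $x\in C\cap K$ onto $\Omega^=_{\Sigma_r}(x)$, where it becomes homeomorphic to a closed coconvex subset of an $(n-1)$--sphere. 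Nothing of the sort comes out of chamber stars. Finally, for non-contractibility the proposed retraction of $\Lambda$ onto $H\cup H'$ does not exist: $\rho_{\Sigma,C}$ already collapses $H'$ into $\Sigma$, so no composite with it can fix $H\cup H'$. The paper's Proposition~\ref{ccc:noncontractible} instead produces a topological $n$--sphere in $\Delta\setminus K$ directly as the spherical join $\{x,y\}*T'$, where $x,y$ are opposite vertices outside $K$ (supplied by thickness and Lemma~\ref{ccc:openconvexsets}) and $T'\subseteq\lk x$ avoiding the projected convex set is obtained by an independent induction on $\dim\Delta$.
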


\noindent Note that coconvex supported subcomplexes are not coconvex in general. But for a coconvex set $M$ and a simplex $\sigma$ which is not contained in neither $M$ nor its complement, $M\cap\partial\sigma$ is a strong deformation retract of $M\cap\overline{\sigma}$. We can therefore construct a sequence $\Lambda=\Lambda_n\supseteq\cdots\supseteq\Lambda_0=\Lambda(M)$ of subcomplexes such that the maximal dimension of simplices like $\sigma$ is decreasing and $\Lambda_i \cap M$ is a strong deformation retract of $\Lambda_{i+1}\cap M$. This means that  coconvex supported subcomplexes are homotopy equivalent to their coconvex supports. We record this observation.
\begin{figure}[ht]
    \begin{center}
        \includegraphics[scale=0.8]{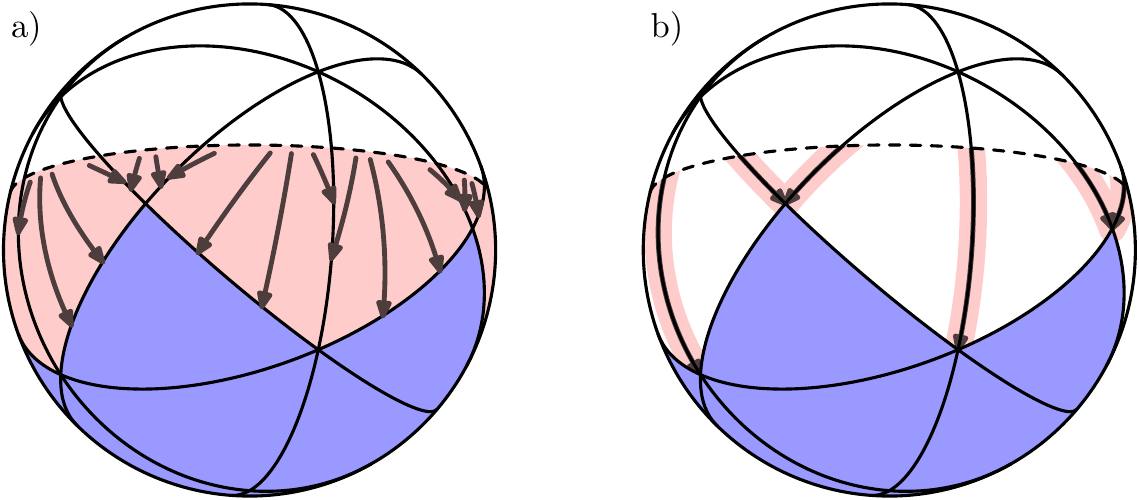}
    \end{center}
    \caption{Deformation of a coconvex support onto the supported subcomplex.}
\end{figure}

\begin{observation}\label{ccc:supp-comp-homotopy}
    Let $\Lambda$ be a subcomplex and let $M$ be a coconvex subset of $\Delta$. Then $\Lambda\cap M$ and $\Lambda(M)$ are homotopy equivalent.
\end{observation}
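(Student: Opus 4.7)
The plan is to construct a finite descending chain of strong deformation retracts interpolating between $\Lambda\cap M$ and $\Lambda(M)$, following the hint in the paragraph preceding the statement. The atomic step is the following local claim: if $\sigma$ is an open simplex of $\Delta$ whose interior meets both $M$ and $\Delta\setminus M$, then $\partial\sigma\cap M$ is a strong deformation retract of $\overline{\sigma}\cap M$. I would prove it by picking a point $p\in\sigma\cap(\Delta\setminus M)$, which exists by assumption; since $p$ lies in the relative interior of the convex closed spherical simplex $\overline{\sigma}$, for every $x\in\overline{\sigma}\setminus\{p\}$ the geodesic ray from $p$ through $x$ meets $\partial\sigma$ in a unique point $\phi(x)$, with $\phi(x)=x$ whenever $x\in\partial\sigma$ (the vanishing barycentric coordinate of $x$ persists along the ray because $p$ has all coordinates strictly positive). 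The map $H(x,t)=r_\Delta(x,\phi(x),t)$ from Proposition~\ref{building:deformation} will then be the required homotopy; the only real point is to check that $H(x,t)$ remains in $M$ for every $t$, and this is forced by the $\pi$-convexity of $\Delta\setminus M$, for otherwise $p$ and $H(x,t)$ would both lie in this convex set, giving $[p,H(x,t)]\subseteq\Delta\setminus M$ and contradicting $x\in[p,H(x,t)]\cap M$.

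Next I would globalise by filtering $\Lambda$ as $\Lambda^{(k)}:=\Lambda(M)\cup\{\sigma\in\Lambda\setminus\Lambda(M):\dim\sigma\leq k\}$ for $k=-1,0,\ldots,\dim\Lambda$. Each $\Lambda^{(k)}$ is a subcomplex because $\Lambda\setminus\Lambda(M)$ is upward closed (if $\overline{\sigma}\not\subseteq M$ and $\sigma'\geq\sigma$, then $\overline{\sigma'}\supseteq\overline{\sigma}$ also fails to lie in $M$), producing a descending chain $\Lambda=\Lambda^{(\dim\Lambda)}\supseteq\cdots\supseteq\Lambda^{(-1)}=\Lambda(M)$. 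The simplices of $\Lambda^{(k)}\setminus\Lambda^{(k-1)}$ are maximal in $\Lambda^{(k)}$ (their cofaces in $\Lambda$ have dimension exceeding $k$ and already lie in the removed upper layers), so the local SDR applied to each such $\sigma$ retracts $\overline{\sigma}\cap M$ onto $\partial\sigma\cap M$ identically on the boundary; these retracts glue along common faces to yield a strong deformation retract of $\Lambda^{(k)}\cap M$ onto $\Lambda^{(k-1)}\cap M$. Composing across $k$ then exhibits $\Lambda(M)$ as a strong deformation retract of $\Lambda\cap M$, which implies the observation.

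The main technical obstacle is verifying that the atomic local retract is available for every simplex in the layer $\Lambda^{(k)}\setminus\Lambda^{(k-1)}$. Those $\sigma$ entirely contained in $\Delta\setminus M$ cause no trouble, since then $\overline{\sigma}\cap M=\partial\sigma\cap M$ already; the genuinely interesting partial case is the one handled above. The nuisance case is $\sigma\subseteq M$ with $\overline{\sigma}\not\subseteq M$, where the interior of $\sigma$ contains no bad point from which to retract. Here I would choose the retraction centre in $\partial\sigma\cap(\Delta\setminus M)$ (nonempty, and $\pi$-convex inside $\overline{\sigma}$) and modify the radial push by a suitable reparametrisation so that the resulting homotopy is stationary on $\partial\sigma\cap M$; the same convexity argument as before guarantees it stays inside $M$. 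Once this extra bookkeeping is in place, the filtration proceeds verbatim and the observation follows.
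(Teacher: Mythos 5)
Your approach coincides with the one the paper sketches in the paragraph preceding the observation: filter $\Lambda$ by the dimension of the simplices lying in $\Lambda\setminus\Lambda(M)$ and push each such top-dimensional simplex to its boundary by a radial deformation retraction that stays inside $M$ by the convexity of $\Delta\setminus M$. Your verification of the main case (an open simplex meeting both $M$ and $\Delta\setminus M$) is correct: picking the centre $p$ inside $\sigma\cap(\Delta\setminus M)$ makes $\phi$ the identity on $\partial\sigma$, the segments $[p,\phi(x)]$ lie in $\overline{\sigma}$ (of diameter at most $\pi/2$ by \ref{building:chamberdiameter}), and the $\pi$-convexity argument forces $H(x,t)\in M$. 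The filtration bookkeeping (maximality of the $k$-simplices in $\Lambda^{(k)}$, gluing of the local retracts along shared faces) is also sound.

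The place where your write-up has a real gap is the ``nuisance case'' you flag yourself, namely $\sigma\subseteq M$ with $\overline{\sigma}\not\subseteq M$. There you propose to take the centre $p$ in $\partial\sigma\cap(\Delta\setminus M)$ and ``modify the radial push by a suitable reparametrisation'' so that it fixes $\partial\sigma\cap M$. This does not go through as stated. If $p$ lies in the relative interior of a proper face $\tau<\sigma$, the radial projection from $p$ is not the identity on $\partial\sigma\cap\st_{\overline{\sigma}}\tau$: for instance, with $\sigma$ a triangle $[u,v,w]$ and $p$ an interior point of the edge $[u,v]$, the radial map sends all of $(u,p)$ to the vertex $u$ and all of $(p,v)$ to $v$. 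Those points lie in $\partial\sigma\cap M$, so the homotopy is not stationary on the target, and merely slowing down the push (a time reparametrisation) cannot fix this, since at time $1$ the map must still land in $\partial\sigma$. An actual strong deformation retraction of $\overline{\sigma}\cap M$ onto $\partial\sigma\cap M$ does exist in this case (one can see this, for instance, by a conformal change of model), but it is not the naive radial push, and you would have to exhibit it.

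That said, your instinct that this case deserves attention is good, and worth recording: the paper's own one-paragraph sketch silently passes over it too. The case is actually vacuous in every place the paper invokes the observation, because there $M$ is closed, hence $\Delta\setminus M$ is open, and then $\overline{\sigma}\cap(\Delta\setminus M)$ is a relatively open subset of $\overline{\sigma}$, which cannot be nonempty and contained in $\partial\sigma$. So either restrict the statement to closed (or open) coconvex $M$ and drop the nuisance case altogether, or supply an honest construction for it; the current ``suitable reparametrisation'' is a placeholder, not a proof.
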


\begin{lemma}\label{ccc:openconvexsets}
     Suppose $\dim\Delta>0$. If $M$ is an open, convex subset of $\Delta$ containing a pair of antipodes, then $M=\Delta$.
\end{lemma}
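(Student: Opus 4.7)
Let $x, y \in M$ denote the antipodes and, using openness of $M$, fix $\epsilon > 0$ with $B_\epsilon(x) \cup B_\epsilon(y) \subseteq M$. The first step is to show that every apartment $\Sigma$ containing both $x$ and $y$ is contained in $M$. Such a $\Sigma$ is a standard sphere of dimension $\dim\Delta \geq 1$ in which $x$ and $y$ are antipodal, so any $z \in \Sigma$ lies on some geodesic arc $\gamma$ from $x$ to $y$ in $\Sigma$. Choose $x'$ on the subarc of $\gamma$ from $x$ to $z$ with $d(x, x') < \epsilon$, and $y'$ on the subarc from $z$ to $y$ with $d(y, y') < \epsilon$. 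Then $x', y' \in M$ and $d(x', y') < \pi$, and by Proposition~\ref{building:segments} the unique segment $[x', y']$ coincides with the subarc of $\gamma$ from $x'$ to $y'$, which contains $z$. Convexity of $M$ then gives $z \in M$. Hence $\Sigma \subseteq M$, and by Proposition~\ref{building:segments} again, $M \supseteq [x, y]$, the union of all apartments through $x$ and $y$.

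For the extension to an arbitrary $z \in \Delta$, the building axioms provide an apartment $\Sigma_z$ containing the simplices $\sigma_x$ and $\sigma_z$. In the sphere $\Sigma_z$, $x$ has a unique antipode $y^* \in \opp(x)$, and $z$ lies on some geodesic from $x$ to $y^*$ inside $\Sigma_z$. If $y^* \in M$, then applying the first step to the antipodal pair $(x, y^*)$ yields $\Sigma_z \subseteq M$, and in particular $z \in M$. It therefore suffices to establish $\opp(x) \subseteq M$.

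Showing $\opp(x) \subseteq M$ is the main obstacle, which I would handle by iterating the first step. The subcomplex $[x, y] \subseteq M$ contains many antipodal pairs $(w, \bar{w})$, namely any two opposite points in any apartment through $x, y$, and applying the first step to each such pair adds a further union $[w, \bar{w}]$ of apartments to $M$. Iterating, one aims to reach an arbitrary antipode $y^*$ of $x$ via a finite chain $y = y_0, y_1, \ldots, y_k = y^*$ of antipodes with consecutive entries lying in a common apartment already known to be inside $M$. Formalising this chain-of-apartments argument, which rests on the gallery-connectedness of the subcomplex of chambers of $\Delta$ opposite to $\sigma_x$, is the technically delicate part of the proof.
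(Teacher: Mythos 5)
Your first step — showing that every apartment through both antipodes $x, y$ lies in $M$, hence $[x,y] \subseteq M$ — is correct and agrees with the paper's argument, though the paper phrases it more compactly (``the convex hull of $x$ together with a neighborhood of $y$ covers $\Sigma$''). But the second step is a genuine gap, and you acknowledge as much. You propose to prove $\opp(x) \subseteq M$ by chaining through ``gallery-connectedness of the subcomplex of chambers opposite to $\sigma_x$,'' but you never define the chain, never show that consecutive antipodes $y_i, y_{i+1}$ lie in an apartment already known to be inside $M$, and never prove that such a finite chain reaches an arbitrary $y^* \in \opp(x)$. The opposition-complex connectivity you invoke is itself a nontrivial theorem (and typically stated for thick buildings, which this lemma does not assume), so even the outline leans on a heavier tool than is warranted.

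The paper's route is much lighter and you should compare it with your sketch. Once one apartment $\Sigma \ni x,y$ is known to lie in $M$, the paper observes: for \emph{any} chamber $C$ with $C \cap M \neq \emptyset$, there is a chamber $C'$ of $\Sigma$ opposite to $C$; picking $u \in C \cap M$ and its antipode $u' \in \overline{C'} \subseteq \Sigma \subseteq M$, the first step (applied to $u, u'$) puts an entire apartment containing $\overline{C}$ inside $M$. Now induct on gallery distance from $\Sigma$: the base case is $\Sigma \subseteq M$; and if a chamber $D$ has $\overline{D} \subseteq M$, then any chamber $C$ adjacent to $D$ meets $M$ (the shared panel lies in $M$, which is open), so $\overline{C} \subseteq M$ by the preceding observation. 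Since $\Delta$ is gallery-connected, $M = \Delta$. This avoids any statement about $\opp(x)$ or the structure of opposition complexes; the ingredient you were missing is to cone off from opposite \emph{chambers} in the fixed apartment $\Sigma$, not from antipodes of the original point $x$.
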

\begin{proof}
    Let $x,y\in M$ be antipodal points, contained in some apartment $\Sigma$. Then $\Sigma$ is contained in $M$, since the convex hull of $x$ together with a neighborhood of $y$ covers $\Sigma$. The closure of any chamber $C$ that intersects $M$ is contained in $M$, since $C$ has an opposite chamber in $\Sigma$ and we therefore get an apartment in $M$ (as above) containing both. Now the assertion follows by induction on the gallery distance from $\Sigma$.
\end{proof}

\begin{notation}
    If  $\sim$ is one of the relations $<$, $\leq$, $>$, $\geq$ or $=$ and $x\in\Delta$ we put $\Omega^\sim_\Delta(x) = \{y\in\Delta\mid d(x,y)\sim\pi/2\}$.
\end{notation}

\begin{prop}\label{ccc:sphericity}
    Closed, coconvex supported subcomplexes of $\Delta$ are $\dim\Delta$-spherical or empty.
\end{prop}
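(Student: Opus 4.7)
The plan is to establish, for nonempty $\Lambda(M)$, both $\dim\Lambda(M)=n:=\dim\Delta$ and $(n-1)$-connectedness of $\Lambda(M)$. By Observation \ref{ccc:supp-comp-homotopy}, the subcomplex $\Lambda(M)$ is homotopy equivalent to $M$ as a topological subspace of $\Delta$, so the connectivity claim can be shifted to $M$. The dimension claim is a separate combinatorial assertion, which I would handle first.

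For the dimension, set $U=\Delta\setminus M$, a proper open convex subset. By Lemma \ref{ccc:openconvexsets}, $U$ contains no pair of antipodal points (the case $\dim\Delta=0$ is trivial since $M$ is then a nonempty set of vertices). Fixing any apartment $\Sigma$, Proposition \ref{building:segments} implies that $U\cap\Sigma$ is a proper convex subset of the round sphere $\Sigma\cong\stdsph^n$ still containing no antipodal pair, and such a subset is contained in an open half-sphere. Therefore $M\cap\Sigma$ contains a closed hemisphere of $\Sigma$, and in particular a closed chamber, so $\dim\Lambda(M)=n$.

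For the $(n-1)$-connectivity of $M$, my plan is induction on $n$, the case $n=0$ being immediate. For the inductive step, I would pick the chamber $C_0\subseteq M$ furnished above and filter $M$ via the unions of closed chambers of $\Delta$ at combinatorial gallery distance at most $k$ from $C_0$. At each step $k\to k+1$, a newly attached closed chamber meets the previous stage in a union of panels, which can be controlled through the boundary of the chamber. The inductive hypothesis, applied to the closed, coconvex supported subcomplex obtained inside the link $\lk\sigma$ of a suitable simplex $\sigma$, is meant to supply the $(n-2)$-sphericity that feeds into Proposition \ref{mainconstr}. The geodesic deformations of Proposition and Definition \ref{building:deformation} and the distance-decreasing projection of Proposition \ref{building:retraction} will furnish the metric control needed to move points of $M$ along geodesics into the next stage of the filtration.

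The main obstacle will be confirming that the intersections used in Proposition \ref{mainconstr} really do possess the required connectivity, which comes down to showing that the coconvexity of $M$ in $\Delta$ is inherited by the trace $M\cap\lk\sigma$ in the smaller spherical building $\lk\sigma$. This transfer requires the interplay between the geodesic projection $p_x$ of Proposition and Definition \ref{building:geodesicproj} and the combinatorial projection recorded in Lemma \ref{building:projection}, together with the metric identification of the link given by Lemma \ref{building:link-metric}. In the reducible case one can additionally appeal to Proposition \ref{building:join} to split off a spherical-join factor and argue on each side separately. Once the coconvex-in-the-link property is established, the inductive step closes and the connectivity of $M$, hence of $\Lambda(M)$, follows.
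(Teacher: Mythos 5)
Your dimension argument is essentially the same as the paper's and is correct: $U=\Delta\setminus M$ is a proper open convex set, so by Lemma~\ref{ccc:openconvexsets} it meets each apartment in a proper open convex subset of the apartment-sphere; such a set lies in an open hemisphere, so $M\cap\Sigma$ contains a closed hemisphere and hence, by Corollary~\ref{building:chamberdiameter}, a closed chamber.

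The connectivity argument, however, departs from the paper and has a genuine gap. The paper does \emph{not} induct on dimension for this proposition. It covers any finite subcomplex of $\Delta(M)$ by finitely many apartments $\Sigma_1,\dots,\Sigma_m$ all containing a fixed chamber $C$ chosen \emph{outside} $M$, and orders them by von Heydebreck's lemma so that each $\Psi_r=\Sigma_r\cap(\Sigma_1\cup\dots\cup\Sigma_{r-1})$ is a union of roots in $\Sigma_r$. Projecting geodesically from a point $x\in C\setminus M$, one retracts $\Psi_r\cap M$ onto $\Psi_r\cap\partial(M\cap\Sigma_r)$ and identifies this, via the further projection $q$ onto the equator $\Omega^{=}_{\Sigma_r}(x)$, with a closed coconvex subset of the sphere $\Omega^{=}_{\Sigma_r}(x)\cong\stdsph^{n-1}$, giving the $(n-2)$-connectivity needed for Proposition~\ref{mainconstr}~a). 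The design choice of a center $x$ in the \emph{complement} of $M$ and the apartment-by-apartment gluing are both essential to this mechanism.

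Your proposed filtration by gallery distance from a chamber $C_0\subseteq M$, combined with induction on $\dim\Delta$, does not deliver the needed intersections. When you attach a new closed chamber $\overline{D}$, the intersection with the previous stage is a subcomplex of $\partial\overline{D}$, i.e., of the boundary sphere of a single chamber; it is \emph{not} a link $\lk\sigma$, and it is not a building. So the inductive hypothesis on $\dim\Delta$ (which would concern closed coconvex supported subcomplexes of the spherical building $\lk\sigma$, as in Corollary~\ref{ccc:link}) simply does not apply to the attaching map. Moreover, once you intersect with $M$ the attaching subcomplex becomes an arbitrary closed coconvex trace in $\partial\overline{D}$, and proving its $(n-2)$-connectivity is essentially the same problem you started with, one dimension lower but now on a Coxeter complex rather than a building, with no mechanism supplied to handle it. The appeals to Lemma~\ref{building:projection}, Lemma~\ref{building:link-metric}, and Proposition~\ref{building:join} belong to the proof of Corollary~\ref{ccc:link} (the link statement needed later for homotopy Cohen--Macaulayness), not to the connectivity of $\Lambda(M)$ itself, and cannot be redirected to patch the filtration. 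In short, the missing idea is the paper's reduction to apartments ordered à la von Heydebreck together with the geodesic projection from a point outside $M$; without it, the inductive/shelling scheme you sketch has no leverage on the intersections.
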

\begin{proof}
    Let $M$ be a non empty, closed, and coconvex subset of $\Delta$. Since the case $\dim\Delta=0$ is trivial and the case $\Delta=\Delta(\Delta)$ is covered by the Solomon--Tits theorem, suppose that $M$ is a proper subset and $\dim\Delta>0$.

    Let $\Sigma$ be an apartment whose intersection with $M$ is not empty. Then $\Sigma\setminus M$ is contained in an open hemisphere of $\Sigma$, since it is a proper, open, convex subset. Therefore $\Sigma\cap M$ contains a closed chamber by \ref{building:chamberdiameter}. Hence, $\Delta(M)$ is $\dim\Delta$--dimensional.

    Let $C$ be a chamber not contained in $M$. Any finite subcomplex of $\Delta(M)$ is coverable by a finite set $\{\Sigma_1,\ldots,\Sigma_m\}$ of apartments, each of which contains $C$. Let us denote $\Lambda_r=\Sigma_1\cup\cdots\cup\Sigma_r$ and $\Psi_r=\Sigma_r\cap\Lambda_{r-1}$. According to \cite[Lemma~3.5]{vh} we may choose the set of apartments and their order such that, for any $r$, $\Psi_r$ is an union of roots in $\Sigma_r$. We prove the ($\dim\Delta-1$)--connectedness of $\Lambda_m(M)$. Clearly, as $\Sigma_r\cap M$ is closed and coconvex, $\Sigma_r(M)\approx\Sigma_r\cap M$ is ($\dim\Delta-1$)--connected. Hence, the desired assertion follows from \ref{mainconstr}~a), once we show that $\Psi_r\cap M\approx\Psi_r(M)=\Sigma_r(M)\cap\Lambda_{r-1}(M)$ is ($\dim\Delta-2$)--connected.

    Let $x$ be a point of $C\setminus M$ and let $p:\Sigma_r\setminus\opp^*(x)\maps\partial(\Sigma_r\cap M)$ denote the geodesic projection with center $x$ onto the boundary of $\Sigma_r\cap M$. Since $\Psi_r$ is an union of roots, each of which contains $x$ as an inner point, we know that $\Psi_r$ is star shaped with respect to $x$ and does not contain the antipode of $x$ in $\Sigma_r$. Therefore the restriction of $p$ to $\Psi_r\cap M$ is a retraction $\Psi_r\cap M\rightarrow\Psi_r\cap\partial(M\cap\Sigma_r)$ inducing a strong deformation retraction
    \begin{equation*}
        (\Psi_r\cap M)\times[0,1]\longrightarrow\Psi_r\cap M;\thickspace
        (z,t)\longmapsto r_\Delta(z,p(z),t)
    \end{equation*}
    from $\Psi_r\cap M$ onto $\Psi_r\cap\partial(M\cap\Sigma_r)$. Hence, $\Psi_r\cap M\approx\Psi_r\cap\partial(M\cap\Sigma_r)$.
    \begin{figure}[ht]
        \begin{center}
            \includegraphics[scale=0.8]{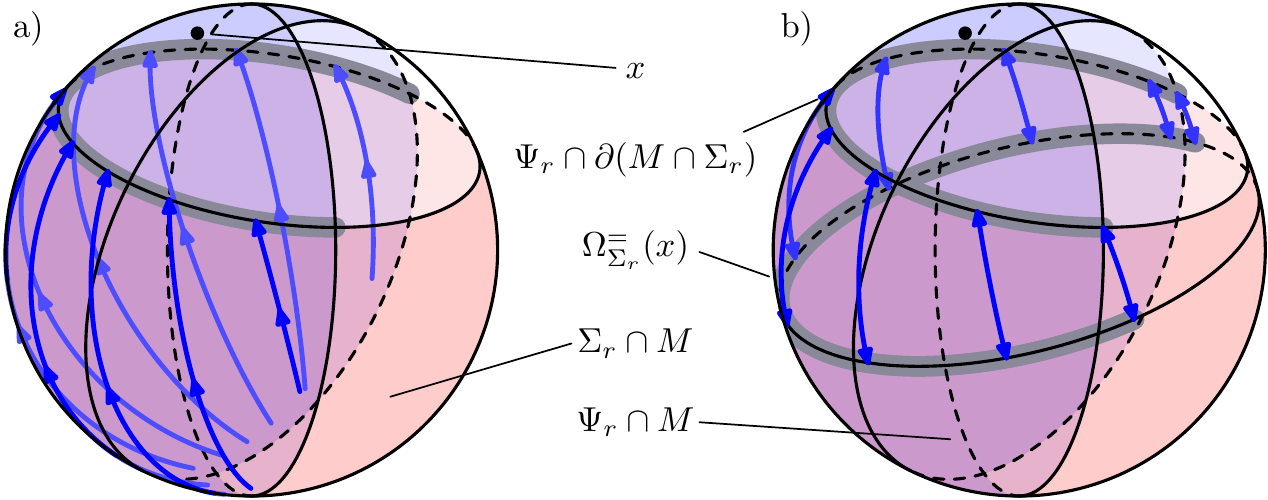}
        \end{center}
        \caption{Via geodesic projection with center $x$ we get a) a homotopy equivalence $\Psi_r\cap M\approx\Psi_r\cap\partial(M\cap\Sigma_r)$ and b) a homeomorphism of $\Psi_r\cap\partial(M\cap\Sigma_r)$ and a coconvex subset of $\Omega_{\Sigma_r}^=(x)$.}
    \end{figure}
    Observe that $p$ maps $(\Sigma_r\setminus M)\setminus\Psi_r$ onto the complement $\partial(M\cap\Sigma_r)\setminus\Psi_r$ of the above retract. Let $q:\Sigma_r\setminus\opp^*(x)\rightarrow \Omega_{\Sigma_r}^=(x)$ denote the geodesic projection with center $x$ onto the equator $\Omega_{\Sigma_r}^=(x)$. Note that the restriction of $q$ to $\partial(M\cap\Sigma_r)$ is the inverse homeomorphism of $p|_{\Omega_{\Sigma_r}^=(x)}$ and that $q=q\circ p$. Furthermore $q$ maps open, convex subsets of $\Sigma_r\setminus\opp^*(x)$ to open, convex sets. Since $(\Sigma_r\setminus M)\setminus\Psi_r$ is open and convex, $q(\partial(M\cap\Sigma_r)\setminus\Psi_r)=q((\Sigma_r\setminus M)\setminus\Psi_r)$ is an open, convex subset of $\Omega_{\Sigma_r}^=(x)$. Therefore $\Psi_r\cap\partial(M\cap\Sigma_r)$ is ($\dim\Delta-2$)--connected, since it is the homeomorphic image of a closed, coconvex subset of $\Omega_{\Sigma_r}^=(x)$.
\end{proof}

\begin{lemma}\label{ccc:proj-openconv-on-link}
     Suppose $\dim\Delta>0$. Let $x, y\in\Delta$ be opposite vertices and let $M$ be a proper, open, convex subset of $(x,y)$ (see \ref{building:segments}). Then the image of $M$ under the geodesic projection onto $\lk x$ is a proper, open, convex subset of $\lk x$.
\end{lemma}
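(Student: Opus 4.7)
My plan decomposes the conclusion into three claims about $p_x(M) \subseteq \lk x$: openness, convexity, and properness.

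\textbf{Openness.} For each $a \in (x,y)$ the geodesic from $x$ to $y$ with initial direction $p_x(a)$ is uniquely determined: along any such arc the forward direction at an interior point $z$ must equal the well-defined $p_z(y)$. Hence $v' \mapsto \gamma_{v'}(d(x,a))$ is a continuous local section of $p_x$ at $v = p_x(a)$, and openness of $M$ puts a neighborhood of $v$ in $\lk x$ into $p_x(M)$.

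\textbf{Convexity.} Take $v_1, v_2 \in p_x(M)$ with $d_{\lk x}(v_1, v_2) < \pi$ and pick $a_i \in M$ projecting to $v_i$. By Proposition~\ref{building:segments} applied to the spherical building $\lk x$, the geodesic $[v_1, v_2]$ sits in some apartment $A \subseteq \lk x$. The key step is to realize $A$ as $\lk_\Sigma x$ for an apartment $\Sigma$ of $\Delta$ that also contains $y$. Uniqueness of the meridians from $x$ to $y$ in directions $v_1$ and $v_2$ then forces $a_1, a_2 \in \Sigma$, and $M \cap \Sigma$ is an open convex subset of the sphere $\Sigma \cong \stdsph^n$ missing both $x$ and $y$. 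The standard spherical fact that geodesic projection from a pole sends such subsets to open convex subsets of the equator now makes $p_x(M \cap \Sigma)$ open convex in $\lk_\Sigma x$, so $[v_1, v_2] \subseteq p_x(M \cap \Sigma) \subseteq p_x(M)$.

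\textbf{Properness.} Applying Lemma~\ref{ccc:openconvexsets} to $\lk x$ (with a direct count in the degenerate $\dim \lk x = 0$ case) reduces properness to the claim that $p_x(M)$ contains no antipodal pair. Suppose $v, v' \in p_x(M)$ are antipodal, and pick $a, b \in M$ projecting to them; then $\angle_x(a, b) = \pi$, and Proposition~\ref{building:sph-cos-law} gives
\begin{equation*}
\cos d(a,b) \leq \cos d(x,a)\cos d(x,b) - \sin d(x,a)\sin d(x,b) = \cos\bigl(d(x,a) + d(x,b)\bigr).
\end{equation*}
If $d(x,a) + d(x,b) \leq \pi$, monotonicity of $\cos$ on $[0, \pi]$ together with the triangle inequality force $d(a,b) = d(x,a) + d(x,b)$, so $x \in [a,b]$. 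If instead $d(x,a) + d(x,b) > \pi$, substituting $d(y,\cdot) = \pi - d(x,\cdot)$ (valid since $a, b \in (x,y)$) rewrites the right side as $\cos(d(y,a) + d(y,b))$ with $d(y,a) + d(y,b) < \pi$, and the same argument at $y$ gives $y \in [a,b]$. When $d(a,b) < \pi$, convexity of $M$ puts $[a, b] \subseteq M$, so either $x \in M$ or $y \in M$, in both cases contradicting $M \subseteq (x, y)$. The remaining possibility $d(a,b) = \pi$ places a pair of antipodes in $M$, whence Lemma~\ref{ccc:openconvexsets} gives $M = \Delta$, again a contradiction.

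\textbf{Main obstacle.} The technical crux is the apartment-lifting in the convexity step: showing that the apartment of $\lk x$ containing $[v_1, v_2]$ can be realized as $\lk_\Sigma x$ for an apartment $\Sigma$ of $\Delta$ through both $x$ and $y$. This uses the standard correspondence between apartments of $\lk x$ and apartments of $\Delta$ through $x$, together with the freedom to choose the opposite end of $\Sigma$ so that it contains $y$, and must be handled carefully. The properness step, by contrast, is clean once one exploits the $x \leftrightarrow y$ symmetry on the segment $[x,y]$ through the identity $d(y,\cdot) = \pi - d(x,\cdot)$.
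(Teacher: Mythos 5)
Your overall strategy mirrors the paper's: lift an apartment $A$ of $\lk x$ to the apartment $\Sigma=\conv(x,y,A)$ of $\Delta$ and reduce to the spherical-geometry fact that geodesic projection from a pole carries open convex sets to open convex sets on the equator. The convexity step is essentially the paper's argument, and the openness step (a local section $v'\mapsto\gamma_{v'}(d(x,a))$, rather than the paper's composition with the equator projection $q$) is a reasonable variant, though continuity of that section is asserted rather than derived; the paper sidesteps this by noting that $q|_A$ is an isometry by \ref{building:link-metric}.

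There is, however, a genuine error in the properness step. In the case $d(a,b)=\pi$ you invoke Lemma~\ref{ccc:openconvexsets} applied to $M$ to conclude $M=\Delta$. But that lemma requires $M$ to be open \emph{in $\Delta$}, whereas $M$ is only open in the subspace $(x,y)$; when $\dim\Delta>1$ the set $(x,y)$ has empty interior in $\Delta$, so $M$ is nowhere near open in $\Delta$ and the lemma does not apply. The correct way to dispose of this case is a perturbation: by the earlier reduction there is an apartment $\Sigma$ of $\Delta$ containing $x,y,a,b$ with all four on a common great circle $\Gamma\subseteq\Sigma$, and $\Gamma\setminus\{x,y\}\subseteq(x,y)$. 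Since $M$ is open in $(x,y)$, you may slide $a$ along $\Gamma$ to a nearby $a'\in M$ with $d(a',b)<\pi$ and $x\in[a',b]$ (or $y\in[a',b]$), which lands you back in your first case and yields $x\in M$ or $y\in M$, contradicting $M\subseteq(x,y)$. This perturbation is what the paper implicitly uses when it dismisses "a pair of antipodal points from $\Delta$" in a single clause (note also that the paper's $\pi/2$ in that paragraph is a typo for $\pi$; your reading of the condition as $\angle_x(p_xu,p_xv)=\pi$ is the correct one).
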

\begin{proof}
    Let $A$ be an arbitrary apartment of $\lk x$. The convex hull of $x, y,$ and $A$ is an apartment $\Sigma$. For a point $z\in A$, the geodesic segment joining $x$ and $y$ going through $z$ is contained in $\Sigma$. Therefore $\Sigma$ contains the preimage of $z$ under the restriction $p_x|_{(x,y)}$. Hence, $p_x(M)\cap A=p_x(M\cap\Sigma)$.

    Let $q$ be the geodesic projection with center $x$ onto the equator $\Omega_{\Sigma}^=(x)$. From $q=q\circ p_x|_\Sigma$ we get $q(M\cap\Sigma)=q(p_x(M)\cap A)$. Therefore $q(p_x(M)\cap A)$ is an open, convex subset of $\Omega_{\Sigma}^=(x)$, because $q$ maps open, convex subsets of $\Sigma\setminus\opp^*(x)$ to open, convex subsets of $\Omega_{\Sigma}^=(x)$. Since the restriction of $q$ on $A$ is an isometry according to \ref{building:link-metric}, $p_x(M)\cap A$ is open and convex in $\lk x$. Recall that $A$ has been chosen arbitrary. Hence, $p_x(M)$ is open and convex in $\lk x$.

    Assume there are $u,v\in M$ such that\ $\angle_x(p_xu,p_xv)=\pi/2$. Then $[x,u]\cup[u,y]\cup[y,v]\cup[v,x]$ would be a great circle. But this is impossible, because $M$ would contain $x$ or $y$ or a pair of antipodal points from $\Delta$. Hence, $p_x(M)$ is a proper subset of $\lk x$.
\end{proof}

\begin{cor}\label{ccc:link}
    The links in non empty, closed, coconvex supported subcomplexes of $\Delta$ are non empty, closed, coconvex supported subcomplexes.
\end{cor}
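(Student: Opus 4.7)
I would proceed by induction on $\dim\sigma$. The case $\sigma=\emptyset$ is immediate, since $\lk_\Lambda\emptyset=\Lambda$. For the step, fixing $v\in\sigma$ and using that the link of a vertex in a spherical building is again a spherical building (with canonical metric $\angle_v$, by \ref{building:link-metric}) together with $\lk_\Lambda\sigma=\lk_{\lk_\Lambda v}(\sigma\setminus\{v\})$ reduces the problem to the vertex case $\sigma=\{x\}$ with $x\in\Lambda$.

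Put $U=\Delta\setminus M$, which is open, convex and proper. Viewing $\lk x$ both as the subcomplex of $\Delta$ and as an abstract spherical building (the two being homeomorphic simplex-wise), the natural candidate for a closed coconvex support of $\lk_\Lambda x$ is
\[
    N=\{\eta\in\lk x\mid [x,\eta]\subseteq M\}=\lk x\setminus p_x\bigl(U\cap\overline{\st x}\bigr).
\]
The support identity $\lk_\Lambda x=\Lambda(N)$ is immediate from $\overline{\{x\}\cup\tau}=\bigcup_{\eta\in\overline\tau}[x,\eta]$: namely $\{x\}\cup\tau\in\Lambda$ iff $\overline\tau\subseteq N$. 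Closedness is also easy, since the radial parametrization of \ref{building:deformation} identifies $\overline{\st x}\setminus\{x\}$ with $\lk x\times(0,1]$, making $p_x$ an open map there and showing $p_x(U\cap\overline{\st x})$ is open in $\lk x$. Non-emptiness of $\lk_\Lambda x$ follows (for $\dim\Delta\geq 1$) from $\dim\Lambda=\dim\Delta$ (\ref{ccc:sphericity}), since any chamber of $\Lambda$ whose closure contains $x$ gives a chamber in $\lk_\Lambda x$.

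The main obstacle is the coconvexity of $N$: the complement must be convex for the $\angle_x$-metric. Given $\eta_1,\eta_2\notin N$ with $\angle_x(\eta_1,\eta_2)<\pi$, choose $z_i\in[x,\eta_i]\cap U$ and an apartment $\Sigma$ of $\Delta$ whose link at $x$ is an apartment of $\lk x$ through $\eta_1,\eta_2$; then $z_1,z_2\in\Sigma$ and, by \ref{ccc:openconvexsets} applied to the proper set $U$, the chord $[z_1,z_2]$ lies in $U\cap\Sigma$. For each $\eta$ on the $\angle_x$-geodesic $[\eta_1,\eta_2]_{\lk x}$, the equator-projection technique from the proof of \ref{ccc:proj-openconv-on-link}---namely transferring to $\Omega^=_\Sigma(x)$, which is isometric via the projection $q$ to the apartment $\lk_\Sigma x$ of $\lk x$, and using that $q$ sends open convex subsets of $\Sigma\setminus\opp^*(x)$ to open convex subsets of the equator---would produce a point $w\in[z_1,z_2]\cap U$ on the cevian from $x$ in the direction $\eta$. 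The delicate check, which is the heart of the argument, is that $w$ lies on the initial segment $[x,\eta]$ (i.e.\ $d(x,w)\leq d(x,\eta)$) rather than past $\eta$ on the extended cevian; this uses the diameter bounds $d(x,z_i)\leq d(x,\eta_i)\leq\pi/2$ from \ref{building:chamberdiameter} together with the spherical law of cosines \ref{building:sph-cos-law}.
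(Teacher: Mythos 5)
Your overall strategy matches the paper's: reduce to the vertex case, identify $\lk_\Lambda x$ as the subcomplex of $\lk x$ supported by $N=\lk x\setminus p_x(U\cap\overline{\st x})$, and verify that $N$ is closed and coconvex. The identification of $N$, the reduction to vertices, and the openness argument for $p_x(U\cap\overline{\st x})$ are all sound. (The paper writes the support as $\lk x\setminus p_x(M\cap\st x)$ with $M$ denoting your $U$; since $U$ is open the two sets agree.) The paper then simply observes that $M\cap\st x$ is a proper, open, convex subset of $(x,y)$ (using $\st x\subseteq[x,y]$ via Kleiner--Leeb) and cites Lemma~\ref{ccc:proj-openconv-on-link} to conclude that $p_x(M\cap\st x)$ is proper, open, convex.

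Where you diverge --- and where the gap sits --- is the coconvexity argument. You replace the clean citation of Lemma~\ref{ccc:proj-openconv-on-link} by a direct cevian intersection and then face the ``delicate check'' that the cevian meets $[z_1,z_2]$ at a point $w$ with $d(x,w)\leq d(x,\eta)$. You claim this follows from the chamber-diameter bound $d(x,z_i)\leq d(x,\eta_i)\leq\pi/2$ plus the spherical law of cosines, but these only yield $d(x,w)\leq\pi/2$ (the segment $[z_1,z_2]$ lies in the closed hemisphere $\Omega^\leq_\Delta(x)$), whereas $d(x,\eta)$ can be strictly smaller than $\pi/2$ --- the radius of $\overline{\st x}$ typically varies with direction. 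The fact you actually need is that $\overline{\st x}$ is a convex subcomplex (an intersection of roots in every apartment): then $[z_1,z_2]\subseteq\overline{\st x}$, so $w\in\overline{\st x}$ automatically, and since $\eta$ is exactly the exit point of the cevian from $\overline{\st x}$, this forces $w\in[x,\eta]$. The paper smuggles this convexity in by applying Lemma~\ref{ccc:proj-openconv-on-link} to the convex set $M\cap\st x$ rather than to the larger convex set $M\cap\Sigma$ --- the equator projection $q$ there is applied to something already confined to $\overline{\st x}$, so the produced preimage lands on $[x,\eta]$ without any further check. As written, your proof asserts the delicate check with an unsound justification; it becomes correct once you invoke the convexity of $\overline{\st x}$, or (more in the spirit of the paper) simply apply Lemma~\ref{ccc:proj-openconv-on-link} to $U\cap\st x$ directly rather than rederiving it. As a minor aside, your citation of \ref{ccc:openconvexsets} for $[z_1,z_2]\subseteq U$ is slightly misplaced: that lemma ensures $z_1,z_2$ are not antipodal, after which containment follows from the convexity of $U$ itself.
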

\begin{proof}
    It is sufficient to prove the assertion for vertices. Let $M\subset\Delta$ be a proper, open, convex subset and let $x$ be a vertex in $\Delta\setminus M$. Since $M$ is open, a simplex of $\lk x$ is contained in the link $\lk_{\Delta(\Delta\setminus M)}x$ if and only if its closure does not intersect the image $p_x(M\cap\st x)$. Let $y\op x$ be a vertex. Since $[x,y]$ contains a neighborhood of $x$, by \cite[Lemma~3.6.1]{kl}, it also contains $\st x$. Hence, the assertion follows from \ref{ccc:proj-openconv-on-link}.
\end{proof}

\begin{prop}\label{ccc:noncontractible}
    Non empty, closed, coconvex subsets of at least one
    dimensional, thick, spherical buildings are non contractible.
\end{prop}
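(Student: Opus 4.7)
By Observation~\ref{ccc:supp-comp-homotopy} it suffices to prove that $\Lambda := \Delta(M)$ is non contractible. Proposition~\ref{ccc:sphericity} shows that $\Lambda$ is $n$-spherical with $n := \dim\Delta$, so $(n-1)$-connected, making non contractibility equivalent to $\tilde H_n(\Lambda) \ne 0$. Since $\Lambda$ and $\Delta$ are both $n$-dimensional, their $n$-th homology groups coincide with their groups of $n$-cycles, so the inclusion induces an injection $H_n(\Lambda) \hookrightarrow H_n(\Delta)$. By the Solomon--Tits theorem for thick spherical buildings, the class of any apartment of $\Delta$ is nonzero in $H_n(\Delta)$, so it suffices to exhibit a full apartment $\Sigma$ of $\Delta$ contained in $M$.

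I would construct such $\Sigma$ by induction on $n$, strengthening the inductive statement to ``every non-empty closed coconvex subset of a thick spherical building of dimension at least one contains a full apartment''. By the proof of \ref{ccc:sphericity}, $M$ contains a chamber $C_0$. For $n=1$, $\Delta$ is a thick generalized $m$-gon; by Lemma~\ref{ccc:openconvexsets}, $U := \Delta\setminus M$ contains no antipodal pair, and convexity of $U$ combined with $v\in M$ forces that at most one edge incident to $v$ can meet $U$ in any neighborhood of $v$ (otherwise the geodesic through $v$ between two points in $U$ would force $v\in U$). Using thickness one then extends $C_0$ edge by edge within $\Lambda$ to a $2m$-cycle, which in a generalized $m$-gon is automatically an apartment. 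For $n\ge 2$, pick a vertex $v\in C_0$; by Corollary~\ref{ccc:link}, $\lk_\Lambda v$ is a non-empty closed coconvex supported subcomplex of the thick spherical building $\lk v$ of dimension $n-1\ge 1$, so the inductive hypothesis yields a full apartment $A$ of $\lk v$ contained in $\lk_\Lambda v$. The cone $v*A$ therefore sits in $\Lambda$ and realises one ``half'' of a candidate apartment of $\Delta$ inside $M$.

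The main obstacle is completing this half to a full apartment $\Sigma\subset M$ in the inductive step. One picks a vertex $v^*$ opposite $v$ lying in $M$ -- its existence follows from Lemma~\ref{ccc:openconvexsets} together with convexity of $U$, which prevents $\opp(v)\subset U$ -- and argues, using thickness together with the combinatorial projection of Lemma~\ref{building:projection}, that the apartment of $\Delta$ containing $v$, $v^*$ and $A$ can be chosen so that its ``other half'' also avoids $U$. Ensuring chamber by chamber that this other half lies in $M$ is the technically delicate heart of the proof.
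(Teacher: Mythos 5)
Your reduction to ``exhibit a full apartment of $\Delta$ inside $M$'' is sound as far as it goes (even more simply than the homological argument: the retraction $\rho_{\Sigma,C}$ exhibits any apartment $\Sigma\subseteq\Lambda$ as a retract of $\Lambda$, so $\Lambda$ is non-contractible). But the strengthened inductive claim you introduce --- that every non-empty closed coconvex subset of a thick spherical building of dimension $\geq 1$ contains an entire apartment --- is strictly stronger than non-contractibility, it is nowhere established in the paper, and you do not prove it. The inductive step is exactly where it breaks: having produced the hemisphere $v*A$ inside $\Lambda$, you have no control over which apartment of $\Delta$ extends $v*A$, and there is no a priori reason that any choice has its other hemisphere avoiding the open convex set $U=\Delta\setminus M$. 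You acknowledge this yourself (``the technically delicate heart of the proof''), so this is a genuine missing idea rather than a skipped routine step. The $n=1$ base case also has a soft spot: the local ``at most one bad edge at $v$'' observation lets you start a non-backtracking path in $\Lambda$, but making it close up into a $2m$-cycle of $\Lambda$ is not automatic from thickness alone.

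The paper takes a different and cheaper route, producing a top-dimensional topological sphere in the coconvex set that is not required to be an apartment. Writing $U$ for the open convex complement, it finds opposite vertices $x\op y$ in $\Delta\setminus U$ (possible by \ref{ccc:openconvexsets} and thickness), takes the union $[x,y]$ of all geodesics from $x$ to $y$, and deletes the subset $S$ of those geodesics that meet $M'=p_x(U\cap[x,y])$. What remains lies in $\Delta\setminus U$ and is, by Kleiner--Leeb, the spherical join of $\{x,y\}$ with $\lk x\setminus M'$, where $M'$ is again a proper open convex subset of $\lk x$ by \ref{ccc:proj-openconv-on-link}; induction on $\dim\Delta$ (with a direct argument in the $1$-dimensional case) then yields a $\dim\Delta$-sphere inside $\Delta\setminus U$. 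To salvage your version you would have to independently establish the apartment-in-coconvex-set claim, for which the paper offers no support.
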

\begin{proof}
    Let $\Delta$ be at least one dimensional and thick and let $M\subset\Delta$ be a proper, open, convex subset. We proof the existence of a $\dim\Delta$--dimensional sphere in $\Delta\setminus M$ by induction on $\dim\Delta$.

    Since $\Delta$ is thick, there are three pairwise opposite chambers. Then there is a pair $x\op y$ of opposite vertices inside $\Delta\setminus M$ by \ref{ccc:openconvexsets}.

    Let $S$ be the union of the open geodesic segments that join $x$, $y$ and contain a point from $M'=p_x(M\cap[x,y])$. Then $[x,y]\setminus S$ is a subset of $\Delta\setminus M$. Furthermore $[x,y]\setminus S$ is the spherical join of $\{x,y\}$ and $(\lk x\setminus M',\angle_x)$ by \cite[Proposition~3.10.1]{kl}. Hence we are done, if $\lk x\setminus M'$ contains a $(\dim\lk x)$--dimensional sphere. Clearly, that is assured by the induction hypothesis, provided that $\dim\Delta>1$, since $M'$ is a proper, open, convex subset of $\lk x$ by \ref{ccc:proj-openconv-on-link}. But even if $\dim\Delta=1$ we get a $0$--sphere in $\lk x\setminus M'$, since $M'$ is connected and $\lk x$ is thick.
\end{proof}

    %
%   hcomp.tex
%
%   23.06.2010
%

\section{Hemisphere complexes}

In this section we will examine some special coconvex supported subcomplexes. Their supports are unions of hemispheres, so the complexes will be called hemisphere complexes. Throughout this section let $x$ be an arbitrary point of $\Delta$.

\begin{definition}\label{hc:definition}
    The subcomplex $\Delta^>(x)=\Delta(\Omega^>_\Delta(x))$ is said to be the open hemisphere complex of $\Delta$ with respect to the pole $x$ and $\Delta^\geq(x)=\Delta(\Omega^\geq_\Delta(x))$ is said to be the closed hemisphere complex of $\Delta$ with respect to the pole $x$. $\Delta^=(x)=\Delta(\Omega^=_\Delta(x))$ is the equator complex of $x$.
\end{definition}

\noindent By \cite[II.1~Proposition~1.4]{brha} the sets $\Omega^<_\Delta(x)$ and $\Omega^\leq_\Delta(x)$ are convex. Hence, hemisphere complexes are coconvex supported subcomplexes.

\begin{cor}\label{hc:closed-hc-main}
    Closed hemisphere complexes are homotopy Cohen--Macaulay. They are non contractible for thick buildings.
\end{cor}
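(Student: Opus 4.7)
The plan is to deduce the statement from Theorem~\ref{theorem-a}. The closed hemisphere complex $\Delta^{\geq}(x)$ has support $\Omega^{\geq}_{\Delta}(x)$, and this set is closed (its complement $\Omega^{<}_{\Delta}(x)$ is open by the definition of the metric topology) and coconvex (the complement is convex, as already recorded in the remark preceding this corollary). Hence the only hypothesis of Theorem~\ref{theorem-a} that still needs checking is non-emptiness of $\Delta^{\geq}(x)$.

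To verify non-emptiness, I would pick any apartment $\Sigma$ containing $x$ and let $x'' \in \Sigma$ be the antipode of $x$ inside $\Sigma$. If $C$ denotes a chamber of $\Sigma$ whose closure contains $x''$, then by Corollary~\ref{building:chamberdiameter} every $y \in \overline{C}$ satisfies $d(x'', y) \leq \pi/2$, and the triangle inequality in the CAT$(1)$ metric gives $d(x, y) \geq d(x, x'') - d(x'', y) = \pi - d(x'', y) \geq \pi/2$. Thus $\overline{C} \subseteq \Omega^{\geq}_{\Delta}(x)$, so $C$ is a top-dimensional simplex of $\Delta^{\geq}(x)$ and the subcomplex is non-empty.

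With all three hypotheses in place, Theorem~\ref{theorem-a} gives the homotopy Cohen--Macaulay conclusion immediately, and also non-contractibility whenever $\Delta$ is thick and $\dim\Delta \geq 1$. The one case demanded by the corollary but not covered by Theorem~\ref{theorem-a} is a thick building of dimension zero; there any two distinct vertices are antipodal, so $\Delta^{\geq}(x) = \vt(\Delta) \setminus \{x\}$ is a discrete set with at least two points by thickness, hence not contractible. I expect no real obstacle here: the only genuine step is the antipodal-chamber observation used for non-emptiness, and it falls out directly from the chamber-diameter bound.
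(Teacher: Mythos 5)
Your proof is correct and follows essentially the same approach as the paper's: invoke Theorem~\ref{theorem-a} and handle the dimension-zero case separately by noting that only $x$ itself lies strictly inside the open ball of radius $\pi/2$, so thickness yields a $0$--sphere. The explicit non-emptiness check via a chamber containing an antipode of $x$ is a detail the paper leaves implicit as an immediate consequence of \ref{building:chamberdiameter}, but it is exactly the right verification.
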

\begin{proof}
    The assertion is an immediate consequence of theorem \ref{theorem-a} except for the non contractibility in the case $\dim\Delta=0$. But closed hemisphere complexes are also in this case non contractible, since $\Omega^<_\Delta(x)$ is a single point.
\end{proof}

\noindent Note that the intersection of $\Omega_\Delta^\sim(x)$ with apartments containing $x$ is always convex. We therefore get the following observation.

\begin{observation}\label{hc:full-subcomplex}
    Open hemisphere complexes, closed hemisphere complexes and equator complexes are full subcomplexes of $\Delta$.
\end{observation}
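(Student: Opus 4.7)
The plan is to verify fullness directly. Let $\sim \in \{>,\ \geq,\ =\}$ and suppose $\sigma$ is a simplex of $\Delta$ whose vertices all lie in $\Delta^\sim(x)$. I need to show $\overline{\sigma} \subseteq \Omega^\sim_\Delta(x)$, which by definition means $\sigma \in \Delta^\sim(x)$.

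First I would reduce to a single apartment. Choose a chamber $C$ containing $\sigma$, and pick any chamber $D$ containing $x$. By the standard building axiom that any two chambers lie in a common apartment, there is an apartment $\Sigma$ with $C,D \subseteq \Sigma$. Then $x \in \Sigma$ and $\overline{\sigma} \subseteq \Sigma$, and each vertex $v$ of $\sigma$ satisfies $d(x,v) \sim \pi/2$, so the vertices of $\sigma$ all lie in $\Omega^\sim_\Delta(x) \cap \Sigma$.

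Next I would invoke the parenthetical remark preceding the statement: the intersection $\Omega^\sim_\Delta(x) \cap \Sigma$ is convex in $\Sigma$ for each of the three relations. Indeed, $\Sigma$ is isometric to a standard sphere, and these intersections are the open hemisphere, the closed hemisphere, and the equator with respect to the pole $x$, each of which is $\pi$-convex in $\Sigma$ in the sense of Proposition \ref{building:segments}. On the other hand, by Corollary \ref{building:chamberdiameter} the closed simplex $\overline{\sigma}$ has diameter at most $\pi/2 < \pi$, and within the spherical apartment $\Sigma$ it coincides with the geodesic (i.e.\ $\pi$-) convex hull of its vertices. Combining these two facts, $\overline{\sigma} \subseteq \Omega^\sim_\Delta(x) \cap \Sigma$, which is what was needed.

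There is essentially no obstacle here beyond bookkeeping: the real content is the parenthetical observation about convexity inside apartments containing $x$, which the author leaves to the reader; the only other ingredient is the ability to throw $x$ and $\sigma$ into a common apartment, which is standard. It is worth noting that the argument uniformly handles all three cases $>$, $\geq$, $=$, since the crucial convexity statement is phrased uniformly and the same reduction to an apartment works.
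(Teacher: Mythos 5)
Your proof is correct and follows the paper's intended line of reasoning: the paper gives no separate proof but states the key fact (convexity of $\Omega^\sim_\Delta(x)\cap\Sigma$ for any apartment $\Sigma$ containing $x$) in the sentence preceding the observation, and you have simply unpacked that remark together with the facts that $x$ and any closed simplex fit in a common apartment, and that a closed simplex of diameter $\leq\pi/2$ is the spherical convex hull of its vertices.
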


\begin{notation}
    If $\Delta$ is reducible, then $\Delta_\hor(x)$ denotes the maximal join factor of $\Delta$ that is contained in $\Omega_\Delta^=(x)$ whereas $\Delta_\ver(x)$ denotes the minimal join factor containing $x$.
\end{notation}

\noindent We certainly have $\Delta=\Delta_\hor(x)*\Delta_\ver(x)$, since any irreducible join factor that does not intersect the closure of the simplex carrying $x$ lies in $\Omega_\Delta^=(x)$ by \ref{building:join}. Now let us have a look at the join decomposition of hemisphere complexes:

Let $\Delta=\Delta_1*\Delta_2$ be a reducible spherical building and let $\sim$ be one of the relations $>$, $\geq$ or $=$. We get $\Delta^\sim(x)=\Delta_1(\Omega_\Delta^\sim(x))*\Delta_2(\Omega_\Delta^\sim(x))$ from \ref{hc:full-subcomplex}. If $x$ is a point of $\Delta_1$, then $\Delta_1(\Omega^\sim_\Delta(x))=\Delta^\sim(x)$ and $\Delta_2$ is a subcomplex of $\Delta_\hor(x)$. Therefore $\Delta_2(\Omega_\Delta^\sim(x))$ is empty if $\sim$ is a strong inequality or all of $\Delta_2$ otherwise. If $x$ is not contained in neither $\Delta_1$ nor $\Delta_2$ then there are two unique points $x_1\in\Delta_1$ and $x_2\in\Delta_2$ such that $x$ lies inside their joining segment. In this case we have $\Delta_i(\Omega^\sim_\Delta(x))=\Delta_i^\sim(x_i)$:

Assume $\{i,j\}=\{1,2\}$ and $y\in\Delta_i$. Then $d(x_1,x_2)=\pi/2=d(y,x_j)$, since points of disjoint factors have distance $\pi/2$. There is an apartment containing $x$, $x_1$, $x_2$ and $y$. We therefore get from the spherical law of cosines
\begin{align*}
    \cos d(x,y) &= \sin d(x_j,x) \cos\angle_{x_j}(x,y) \\
    &= \sin d(x_j,x)\cos\angle_{x_j}(x_i,y)
    = \sin d(x_j,x)\cos d(x_i,y).
\end{align*}
Hence, $d(x,y)\sim\pi/2$ if and only if $d(x_i,y)\sim\pi/2$. We proved:

\begin{prop}\label{hc:join}
    Assume $\Delta_\ver(x)=\Delta_1*\cdots*\Delta_k$ is a decomposition of $\Delta_\ver(x)$ into irreducible factors. Then $\Delta^>(x)=\Delta_\ver^>(x)=\Delta_1(\Omega_\Delta^>(x))*\cdots*\Delta_k(\Omega_\Delta^>(x))$ is a join of open hemisphere complexes in $\Delta_1,\ldots,\Delta_k$ and the equator complex decomposes to $\Delta^=(x)=\Delta_\ver^=(x)*\Delta_\hor(x)$.
\end{prop}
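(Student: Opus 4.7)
\noindent The plan is to assemble the proposition from the computations already carried out in the paragraph immediately preceding it, by iterating the two-factor case over the irreducible factors of $\Delta_\ver(x)$.

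First I would record the base ingredient: since $\Delta = \Delta_\hor(x) * \Delta_\ver(x)$ and the equator as well as the open hemisphere complex are full subcomplexes of $\Delta$ by Observation \ref{hc:full-subcomplex}, applying the general two-factor formula $\Delta^\sim(x) = \Delta_\hor(x)(\Omega^\sim_\Delta(x)) * \Delta_\ver(x)(\Omega^\sim_\Delta(x))$ (derived in the preceding paragraph) reduces everything to computing each factor separately. By the very definition of $\Delta_\hor(x)$ as a join factor contained in $\Omega^=_\Delta(x)$, we have $\Delta_\hor(x) \cap \Omega^>_\Delta(x) = \emptyset$ and $\Delta_\hor(x) \cap \Omega^=_\Delta(x) = \Delta_\hor(x)$. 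Hence $\Delta_\hor(x)(\Omega^>_\Delta(x)) = \emptyset$ and $\Delta_\hor(x)(\Omega^=_\Delta(x)) = \Delta_\hor(x)$.

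For the equator assertion, plugging these identities into the two-factor formula with $\sim$ being $=$ yields at once $\Delta^=(x) = \Delta_\ver(x)(\Omega^=_\Delta(x)) * \Delta_\hor(x) = \Delta_\ver^=(x) * \Delta_\hor(x)$, using that $\Omega^=_\Delta(x) \cap \Delta_\ver(x) = \Omega^=_{\Delta_\ver(x)}(x)$ (which follows from the same two-factor decomposition applied in reverse, or directly since distances inside a join factor equal distances in the whole join). For the open hemisphere assertion, the same formula with $\sim$ being $>$ collapses the $\Delta_\hor(x)$--factor to the empty complex, and since the empty complex is a unit for the join, we obtain $\Delta^>(x) = \Delta_\ver(x)(\Omega^>_\Delta(x)) = \Delta_\ver^>(x)$.

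It remains to decompose $\Delta_\ver^>(x)$ as an iterated join. Here I would induct on $k$: if $\Delta_\ver(x) = \Delta_1 * \Delta_2'$ with $\Delta_2' = \Delta_2 * \cdots * \Delta_k$, then $x$ does not lie in either factor (both contain pieces of the simplex carrying $x$ by minimality of $\Delta_\ver(x)$), so the case analysis from the preceding paragraph provides points $x_1 \in \Delta_1$, $x_2' \in \Delta_2'$ with $x \in (x_1, x_2')$ and gives $\Delta_i(\Omega^>_\Delta(x)) = \Delta_i^>(x_i)$ for the two factors. Applying the inductive hypothesis to $\Delta_2'$ with pole $x_2'$ finishes the join decomposition into $\Delta_i(\Omega^>_\Delta(x))$ for $i = 1, \ldots, k$.

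The only real subtlety is bookkeeping: one must verify that iterating the two-factor splitting identifies each $\Delta_i(\Omega^>_{\Delta_\ver(x)}(x))$ with $\Delta_i(\Omega^>_\Delta(x))$ (these agree because the horizontal factor is orthogonal, so $d_\Delta$ restricted to $\Delta_\ver(x)$ equals $d_{\Delta_\ver(x)}$), and that empty-complex factors behave as the join identity. Both points are routine, so I do not anticipate any genuine obstacle beyond this accounting.
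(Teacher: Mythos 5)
Your proposal is correct and follows essentially the same route as the paper: the paper's proof is precisely the paragraph preceding the proposition, which establishes the two-factor identity $\Delta^\sim(x)=\Delta_1(\Omega_\Delta^\sim(x))*\Delta_2(\Omega_\Delta^\sim(x))$ via Observation \ref{hc:full-subcomplex} and the spherical law of cosines, with the iteration over irreducible factors left implicit behind the phrase ``We proved:''. You simply make that iteration and the accompanying bookkeeping (the horizontal factor collapsing under $>$, the join-identity role of the empty complex, the agreement of intrinsic and ambient distances on a join factor) explicit, which is exactly what the paper expects the reader to supply.
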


\noindent If $\Delta$ is irreducible, then the closed stars of the simplices opposite to the simplex carrying $x$ are contained in $\Omega_\Delta^>(x)$, since the diameter of closed chambers is less than $\pi/2$ by \ref{building:join}. Hence, open hemisphere complexes of irreducible spherical buildings have the same dimension as the surrounding building. In general, $\dim\Delta^>(x)=\dim\Delta_\ver(x)\leq\dim\Delta$ by \ref{hc:join}; and the last inequality is strict if $\Delta_\hor(x)$ is not empty. In the sequel we will have to take care of this case.

\begin{lemmadef}\label{hc:equator-link}
    Let $\sigma$ be a simplex of $\Delta^=(x)$. There is a point $p_\sigma x\in\lk\sigma$ such that
    \begin{equation*}
        d(x,y)\sim\pi/2\Longleftrightarrow d_{\lk\sigma}(p_\sigma x,y)\sim\pi/2,
    \end{equation*}
    for any point $y\in\lk\sigma$ and any relation $<$, $\leq$, $=$, $\geq$ or $>$. For the simplex $\xi$ carrying $x$ and the simplex $\chi$ carrying $p_\sigma x$, holds $\sigma\cup \chi=\proj_\sigma\xi$. (If $\sigma$ is a vertex then $p_\sigma x$ is the geodesic projection of $x$ on $\partial\st\sigma=\lk\sigma$.)
\end{lemmadef}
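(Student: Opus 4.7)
The plan is to induct on $\dim\sigma$, starting from the vertex case indicated in the parenthetical remark of the statement. The induction simultaneously delivers the point $p_\sigma x$ and the translation property.

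For the base case $\sigma = v$ a vertex, the condition $d(v,x) = \pi/2$ forces $x \notin \opp^*(v)$, so the geodesic projection $p_v x \in \lk v$ of \ref{building:geodesicproj} is defined. Given an arbitrary $y \in \lk v$, I would produce a common apartment for $v$, $x$, $y$ by first choosing a chamber $C$ containing $v$ and the simplex carrying $y$ (which is possible because $y \in \lk v$), then an apartment $\Sigma$ containing both $C$ and $\xi$ (which exists by the standard building axiom). In $\Sigma$ the spherical law of cosines \ref{building:sph-cos-law} holds with equality, and substituting $d(v,x) = \pi/2$ collapses it to $\cos d(x,y) = \sin d(v,y)\cos\angle_v(x,y)$. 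Since $0 < d(v,y) \leq \pi/2$ by \ref{building:edgelength} and \ref{building:chamberdiameter}, the factor $\sin d(v,y)$ is strictly positive, so $d(x,y) \sim \pi/2$ is equivalent to $\cos\angle_v(x,y) \sim 0$, which by \ref{building:link-metric} is the same as $d_{\lk v}(p_v x, y) \sim \pi/2$, for each of the five relations. The identity $\proj_v \xi = v \cup \chi'$, with $\chi'$ the carrying simplex of $p_v x$ in $\lk v$, follows from \ref{building:projection}: the open segment $(v, p_v x)$ lies in $\proj_v \xi$ and the minimal simplex whose closure contains both $v$ and $p_v x$ is $v \cup \chi'$.

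For the inductive step, I fix a vertex $v$ of $\sigma$ and set $\sigma' := \sigma \setminus v$. Every vertex $u$ of $\sigma'$ satisfies $d(x,u) = \pi/2$, so by the base case $d_{\lk v}(p_v x, u) = \pi/2$. Inside any apartment of $\lk v$ containing $\sigma'$ and $p_v x$, the set $\Omega^=_{\lk v}(p_v x)$ is a great subsphere (by \ref{building:join} applied to $\lk v$) and is therefore closed under taking simplicial closures of its vertex sets, so $\sigma' \subseteq \Omega^=_{\lk v}(p_v x)$. Since $\dim\sigma' = \dim\sigma - 1$, the induction hypothesis applied inside $\lk v$ with pole $p_v x$ produces a point $p_{\sigma'}(p_v x) \in \lk_{\lk v}\sigma' = \lk\sigma$ with the translation property, and I define $p_\sigma x$ to be this point. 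Chaining the vertex-level and induction-level equivalences yields $d(x,y) \sim \pi/2 \Leftrightarrow d_{\lk\sigma}(p_\sigma x, y) \sim \pi/2$ for every $y \in \lk\sigma$. For the projection identity $\sigma \cup \chi = \proj_\sigma \xi$, I invoke the standard transitivity of combinatorial projections across faces: for $v \leq \sigma$, $\proj_\sigma \xi = \proj_\sigma(\proj_v \xi)$, and writing $\proj_v \xi = v \cup \chi'$ one obtains $\proj_\sigma(v \cup \chi') = v \cup \proj^{\lk v}_{\sigma'}\chi'$. By the inductive identity in $\lk v$, $\proj^{\lk v}_{\sigma'}\chi' = \sigma' \cup \chi$ with $\chi$ carrying $p_\sigma x$, so $\proj_\sigma \xi = v \cup \sigma' \cup \chi = \sigma \cup \chi$.

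The main obstacle I anticipate is not the overall framework but the careful handling of two technical ingredients: the residue-projection transitivity $\proj_\sigma = \proj_\sigma \circ \proj_v$ used in the last step, and the passage from the vertex-level $d_{\lk v}(p_v x, u) = \pi/2$ at each vertex $u$ of $\sigma'$ to the simplex-level inclusion $\sigma' \subseteq \Omega^=_{\lk v}(p_v x)$. Both are standard consequences of the building structure and of \ref{building:join}, but they need to be checked against the definitions of $\proj_\sigma$ and of the canonical metric on $\lk v$. The existence of a common apartment for $v$, $x$, $y$ in the base case, by contrast, is routine.
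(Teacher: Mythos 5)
Your argument is correct and is essentially the paper's own proof: establish the vertex case via the spherical law of cosines together with \ref{building:link-metric} and \ref{building:projection}, then induct by splitting $\sigma$ into a vertex $v$ and the complementary face $\sigma'$, passing to $\lk v$ with pole $p_v x$ and invoking the transitivity of $\proj$ across faces (Tits 2.30.5). The only cosmetic difference is the order of the two steps: the paper applies the induction hypothesis to $\sigma'$ in $\Delta$ and then the vertex case to the remaining vertex inside $\lk\sigma'$, whereas you do the vertex case for $v$ first and then the induction hypothesis inside $\lk v$; both are the same two-step descent. One small cleanup: to see $\sigma'\subseteq\Omega^=_{\lk v}(p_v x)$ you do not need the detour through vertices of $\sigma'$ nor the (unclear) appeal to \ref{building:join} as a ``great subsphere'' argument — since $\overline{\sigma}\subseteq\Omega^=_\Delta(x)$, every point $y$ of $\sigma'$ already satisfies $d(x,y)=\pi/2$, and the vertex case applied to each such $y$ gives $d_{\lk v}(p_v x,y)=\pi/2$ directly.
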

\begin{proof}
    At first we will prove the assertion for vertices. Let  $z\in\Delta^=(x)$ be a vertex and let $y\in\lk z$ be a point. There is an apartment containing $x$, $y$, and $z$. By the spherical law of cosines and \ref{building:link-metric} we therefore get
    \begin{equation*}
        \cos d(x,y) = \sin d(z,y)\cos\angle_z(x,y)
                    = \sin d(z,y)\cos d_{\lk z}(p_zx,y).
    \end{equation*}
    Hence, $d(x,y)\sim\pi/2$ if and only if $d_{\lk z}(p_zx,y)\sim\pi/2$. The assertion on the projection is an immediate consequence of \ref{building:projection}.

    One inductively obtains the general assertion and the definition of $p$ by regarding a simplex as a vertex in the link of one of its codimension-1-faces. This is justified, since $\proj_\tau\xi=\proj_\tau\proj_\sigma\xi$ for $\sigma\leq\tau$ by \cite[2.30.5]{ti1} and since the canonical metric is unique.
\end{proof}

\begin{lemma}\label{hc:link}
    The links in open hemisphere complexes of irreducible buildings are non empty, closed, coconvex supported subcomplexes.
\end{lemma}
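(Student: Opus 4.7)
My approach is to reduce to the case $\sigma=\{v\}$ is a single vertex and then construct, using the geodesic projection, an explicit proper open convex subset $U$ of $\lk v$ whose complement supports $\lk_{\Delta^>(x)}v$.

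For the reduction, suppose $\sigma=\{v\}\cup\sigma'$ with $\sigma'\in\simp(\lk v)$ non empty. A simplex $\tau\in\simp(\lk\sigma)$ lies in $\lk_{\Delta^>(x)}\sigma$ iff every vertex of $\sigma\cup\tau$ sits in $\Omega^>_\Delta(x)$, iff $\tau\in\lk_{\lk_{\Delta^>(x)}v}\sigma'$. Granted the vertex case, $\lk_{\Delta^>(x)}v$ is a non empty closed coconvex supported subcomplex of the spherical building $\lk v$, and \ref{ccc:link} applied inside $\lk v$ then delivers the statement for $\sigma$ via the simplex $\sigma'$.

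The vertex case splits on whether $d(v,x)=\pi$. If so, the spherical law of cosines degenerates to $\cos d(x,y)=-\cos d(v,y)$ in any apartment containing $v,x$; irreducibility together with \ref{building:join} forces $d(v,y)<\pi/2$ for every $y\in\lk v$, yielding $\lk_{\Delta^>(x)}v=\lk v$, supported by itself. Otherwise $d(v,x)<\pi$, and I would fix an apartment $\Sigma$ containing $v$ and $x$ and let $v^*$ be the antipode of $v$ in $\Sigma$; then $v^*$ is a vertex of $\Delta$ opposite $v$ with $d(v^*,x)=\pi-d(v,x)<\pi/2$, so $v^*\in\Omega^<_\Delta(x)$. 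The set $M=\Omega^<_\Delta(x)\cap(v,v^*)$ is a proper open convex subset of $(v,v^*)$: openness and convexity descend from $\Omega^<_\Delta(x)$, while properness follows because $v\notin\Omega^<_\Delta(x)$ keeps $M$ away from a neighborhood of $v$. Applying \ref{ccc:proj-openconv-on-link} to the pair $v,v^*$ of opposite vertices, the image $U=p_v(M)$ is a proper open convex subset of $\lk v$.

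It then remains to prove that $\lk v\setminus U$ supports $\lk_{\Delta^>(x)}v$, i.e., a simplex $\tau$ of $\lk v$ has $\bar\tau\cap U=\emptyset$ iff every vertex of $\tau$ lies in $\Omega^>_\Delta(x)$. For the vertex level this is a direct spherical law of cosines computation in an apartment containing $v$, $v^*$, the vertex $w\in\lk v$, and $x$: irreducibility and \ref{building:join} give $d(v,w)<\pi/2$, which together with the sign of $\cos d(v,x)$ forces the half great circle $[v,w,v^*]$ to meet $\Omega^<_\Delta(x)$ exactly when $d(x,w)\leq\pi/2$, with intersection projecting under $p_v$ onto $w$. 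Passage from vertices to simplices uses fullness of $\Delta^>(x)$ (\ref{hc:full-subcomplex}) and convexity of $\Omega^>_\Delta(x)$ inside apartments containing $\bar\tau$ and $x$. Non emptiness follows from properness of $U$: any vertex of $\lk v$ outside $U$ lies in $\Omega^>_\Delta(x)\cap\lk v$.

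The main obstacle I anticipate is the precise verification that $U$ exactly separates the ``good'' from the ``bad'' vertices of $\lk v$, including those on the equator $\Omega^=_\Delta(x)$. This rests on a careful analysis of the half great circles from $v$ to $v^*$ using the spherical law of cosines, and it is there that the irreducibility of $\Delta$ enters crucially via \ref{building:join} to keep all chamber diameters strictly below $\pi/2$.
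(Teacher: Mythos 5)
The paper argues by perturbing the pole: it replaces $x$ by a nearby $x'$ on a segment $[x,y]$ for $y\in\sigma$ and shows $\lk_{\Delta^>(x)}\sigma=\lk_{\Delta^\geq(x')}\sigma$, then cites \ref{ccc:link}. Your plan instead builds the coconvex support directly by projecting $\Omega^<_\Delta(x)\cap(v,v^*)$ onto $\lk v$. The reduction to the vertex case and the handling of $d(v,x)=\pi$ are fine, but the crux of your argument does not go through.

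The set $M=\Omega^<_\Delta(x)\cap(v,v^*)$ is not a convex subset of $(v,v^*)$, so \ref{ccc:proj-openconv-on-link} does not apply, and in fact its conclusion fails here. Since $d(v^*,x)<\pi/2$, the open ball $\Omega^<_\Delta(x)$ contains a neighbourhood of $v^*$; two points of $M$ lying near $v^*$ on different geodesics from $v$ to $v^*$ are joined by a short geodesic passing through $v^*$, which is excluded from $(v,v^*)$ and hence from $M$. Worse, $U=p_v(M)$ is \emph{all} of $\lk v$: given any direction $\bar w\in\lk v$, pick a geodesic $\gamma$ from $v$ to $v^*$ through $\bar w$ (such a geodesic exists because $\lk v\subseteq[v,v^*]$); points $\gamma(t)$ with $t$ close to $\pi$ lie in $\Omega^<_\Delta(x)$ by continuity and project under $p_v$ to $\bar w$. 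So $\lk v\setminus U=\emptyset$ cannot support the non-empty complex $\lk_{\Delta^>(x)}v$. One can check this concretely in $\Delta=\mathrm{Flag}(\projspace{2}(\finitefield{2}))$ with $x$ a point-vertex, $v$ another point-vertex: there $\lk_{\Delta^>(x)}v$ has two of the three vertices of $\lk v$, but $p_v(M)$ hits all three.

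Replacing $(v,v^*)$ by $\st v$, which is what the proof of \ref{ccc:link} actually does, runs into the opposite problem: for a vertex $w\in\lk v$ with $d(x,w)=\pi/2$, the angle computation you sketch shows $\angle_w(v,x)>\pi/2$, hence $d(x,\cdot)$ strictly increases as one moves from $w$ towards $v$, so $[v,w)\cap\Omega^<_\Delta(x)=\emptyset$ and $w\notin p_v(\Omega^<_\Delta(x)\cap\st v)$. Thus that projected set fails to contain the equatorial vertices, which nonetheless must be excluded from the link. This is precisely the difficulty that forces the paper to perturb the pole: with $x'$ strictly between $x$ and a point of $\sigma$, the equatorial vertices of $\lk\sigma$ become strictly interior to $\Omega^<_\Delta(x')$, after which \ref{ccc:link} applies verbatim. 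To repair your argument you would need to introduce that perturbation (or an equivalent device) before taking the geodesic projection.
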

\begin{proof}
    Suppose $\Delta$ is irreducible. Let $\sigma$ be a simplex of $\Delta^>(x)$. The idea is to recognize $\lk_{\Delta^>(x)}\sigma$ as a link $\lk_{\Delta^\geq(x')}\sigma$ in a closed hemisphere complex (to a slightly perturbed pole $x'$) and to apply \ref{ccc:link}. The task is to choose $x'$.

    Let $y$ be a point of $\sigma$. As $D=\{\abs{d(x,z)-\pi/2}\mid z\in\vt(\Delta)\setminus\vt(\Delta^=(x))\}$ is finite by \ref{building:retraction}, we may chose a point $x'$ on a segment joining $x$ and $y$ such that $0<d(x,x')<\min D$. From the triangle inequality we get the implications $d(x,z)>\pi/2\Rightarrow d(x',z)>\pi/2$ and $d(x,z)<\pi/2\Rightarrow d(x',z)<\pi/2$ for any vertex $z$ of $\Delta$, hence
    \begin{equation*}
        \vt(\Delta^>(x))\subseteq\vt(\Delta^>(x'))\subseteq
        \vt(\Delta^\geq(x'))\subseteq\vt(\Delta^\geq(x)).
    \end{equation*}
    Therefore $\lk_{\Delta^>(x)}\sigma$ is contained in $\lk_{\Delta^\geq(x')}\sigma$. Let $z$ be a vertex of $\lk_{\Delta^\geq(x')}\sigma$.  According to \ref{building:join} $d(y,z)$ is less than $\pi/2$. If $y$ is an antipode of $x$, then we have $d(x,z)>\pi/2$. If $x$ and $y$ are not antipodal we deduce $d(x,z)>d(x',z)$ from $d(x,y)>d(x',y)>\pi/2$, $\angle_y(x,z)=\angle_y(x',z)$ and the spherical law of cosines. Hence, $\lk_{\Delta^>(x)}\sigma=\lk_{\Delta^\geq(x')}\sigma$, because their vertex sets coincide. Now the lemma follows from \ref{ccc:link}.
 \end{proof}

\noindent For the remainder of this section our task will be to show that open hemisphere complexes of thick, spherical buildings are spherical and non contractible. (To be more precise: $\Delta^>(x)$ is $\dim\Delta_\ver(x)$--spherical.) We already know by \ref{hc:join}, \ref{hc:full-subcomplex}, and \ref{hc:link} that links in open hemisphere complexes are joins of open hemisphere complexes and closed, coconvex supported subcomplexes. Hence, from theorem~\ref{theorem-a} we will get the second main result.

\begin{maintheorem}\label{theorem-b}
    Open hemisphere complexes of thick, spherical buildings are non contractible homotopy Cohen--Macaulay complexes.
\end{maintheorem}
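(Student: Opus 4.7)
The plan is to derive Theorem \ref{theorem-b} from Theorem \ref{theorem-a} by induction on $\dim \Delta$, using the structural results established in this section. First, Proposition \ref{hc:join} reduces the statement to the case where $\Delta$ is irreducible and $\Delta_\ver(x) = \Delta$: the decomposition $\Delta^>(x) = \Delta_1(\Omega^>_\Delta(x)) * \cdots * \Delta_k(\Omega^>_\Delta(x))$ exhibits $\Delta^>(x)$ as a spherical join of open hemisphere complexes of the irreducible factors of $\Delta_\ver(x)$, and spherical joins of spherical, non contractible complexes are themselves spherical and non contractible.

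For the Cohen--Macaulay condition at non empty simplices, the paragraph preceding the theorem already sketches the strategy: for $\sigma \in \Delta^>(x)$ non empty, Lemma \ref{hc:link} (applied to the irreducible part of $\lk_\Delta \sigma$) combined with Proposition \ref{hc:join} (should $\lk_\Delta \sigma$ be reducible) writes $\lk_{\Delta^>(x)} \sigma$ as a join whose factors are either closed, coconvex supported subcomplexes (where Theorem \ref{theorem-a} applies) or open hemisphere complexes in lower-dimensional thick buildings (where the inductive hypothesis applies). Joins of spherical complexes are spherical of the summed dimension, producing the required link condition at every non empty simplex.

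It remains to show that $\Delta^>(x)$ itself is $\dim\Delta$-spherical and non contractible. The dimension is immediate in the irreducible case: a chamber opposite the simplex carrying $x$ has closure in $\Omega^>_\Delta(x)$, since its vertices have antipodes (in a common apartment) inside the closed chamber of $x$, whose diameter is strictly less than $\pi/2$ by \ref{building:chamberdiameter} and \ref{building:join}. For connectivity and non contractibility, I would compare to the closed hemisphere complex $\Delta^\geq(x)$, which is $\dim\Delta$-spherical and non contractible by Corollary \ref{hc:closed-hc-main}. The equator complex $\Delta^=(x)$ is full in $\Delta^\geq(x)$ by Observation \ref{hc:full-subcomplex}, so the geometric complement $\Delta^\geq(x) \setminus \Delta^=(x)$ deformation retracts onto $\Delta^>(x)$. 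A Mayer--Vietoris decomposition of $\Delta^\geq(x)$ into an open neighborhood of the equator and its complement would then transfer the top-dimensional non-vanishing homology of $\Delta^\geq(x)$ to $\Delta^>(x)$. The main obstacle is controlling the connectivity of $\Delta^=(x)$ and of the simplicial link of $\Delta^=(x)$ inside $\Delta^\geq(x)$; both should be at least $(\dim\Delta - 2)$-connected, and I would handle this inductively using Lemma \ref{hc:equator-link}, which rewrites links in the equator complex as equator complexes in lower-dimensional buildings, closing the induction.
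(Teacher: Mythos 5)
Your reduction to irreducible $\Delta$ via \ref{hc:join} and your treatment of links at nonempty simplices via \ref{hc:link} are correct and match the paper. The gap is the last step: showing $\Delta^>(x)$ itself is $(\dim\Delta-1)$-connected. The one-shot Mayer--Vietoris comparison of $\Delta^>(x)$ with $\Delta^\geq(x)$ does not go through, and the paper explicitly warns against exactly this: the intersection of the open hemisphere $\Omega^>_\Delta(x)$ with a union of closed hemispheres need not be $(\dim\Delta-2)$-connected. In your decomposition the intersection term is essentially the boundary of a regular neighborhood of $\Delta^=(x)$ inside $\Delta^\geq(x)$, and there is no reason for it to be highly connected; if it were, none of the machinery of Subsections 3.1--3.2 would be needed. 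The concrete obstruction is spelled out just before Definition \ref{filter:def-restriction}: when one tries to contract $\partial\st_{\Delta^\geq(x)}\sigma$ into $\Delta^>(x)$ by geodesically coning towards an antipode $z$, a boundary point $u$ lying in the equator can have $(u,z]$ stuck in $\Omega^=_\Delta(x)$ whenever $\lk\sigma$ has a join factor inside the equator --- and this happens regardless of the choice of $z$. Lemma \ref{hc:equator-link} rewrites equator links as equator complexes in links, but it does not remove this obstruction, so it cannot close the induction you propose.

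The paper therefore does not delete the equator in one stroke. It introduces the restriction map $\restr_\Delta^x$ and filters $\Delta^\geq(x)=F_N\supset\cdots\supset F_0=\Delta^>(x)$ by the height of $\restr_\Delta^x$, peeling off only those relative stars $\st_{F_k}\sigma$ whose links have the form $(\lk\sigma)^>(p_\sigma x)*(\lk\sigma)_\hor(p_\sigma x)$; and when $(\lk\sigma)_\hor(p_\sigma x)\neq\emptyset$, it covers $\partial\st_{F_k}\sigma$ by a family of cones $\defclpp(\sigma,L*A,\tau_A)$ indexed by apartments $A$ of the horizontal factor, using thickness via Lemma \ref{subc:apartment-intersection} to choose opposites $\tau_A$ for which the cones avoid the equator (Lemmas \ref{subc:proj-condition}, \ref{subc:intersection-condition}). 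Proposition \ref{mainconstr}~b) is then applied step by step, not once. The non-contractibility is likewise not read off from $H_{\dim\Delta}(\Delta^\geq(x))$: the paper explicitly constructs a $\dim\Delta$-sphere by producing two distinct opposites of a chosen equator vertex inside $\Delta^>(x)$, again invoking \ref{subc:apartment-intersection}. Your outline identifies the right difficulty but underestimates it; the filtration and the cone constructions are not optional refinements but the actual content of the proof.
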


\noindent Note that the proof of \ref{ccc:sphericity} would not work in the case of open hemisphere complexes, since the intersection of an open hemisphere with an union of closed hemispheres is not ($\dim\Delta-2$)-connected in general. A closer look at the proof of \cite[Lemma~3.5]{vh} suggests that such situations would be inevitable. For classical buildings one is able to achieve a precise description of the links in hemisphere complexes. Therefore I tried to mimic the sphericity proofs H.~Abels and P.~Abramenko used in \cite{al2} and \cite{ab4}, but I was not able to avoid limitations on the thickness of the underlying buildings. This led to a different approach: Starting with a closed hemisphere complex, which is known to be spherical by \ref{hc:closed-hc-main}, we delete the stars of simplices contained in the equator complex by a filtration such that the boundary of the deleted stars is contractible in the remaining subcomplex (see figure \ref{figure:contractingstars}). To do this, we will have to spend some work in advance.

\begin{figure}[ht]
\begin{center}
    \includegraphics[scale=0.8]{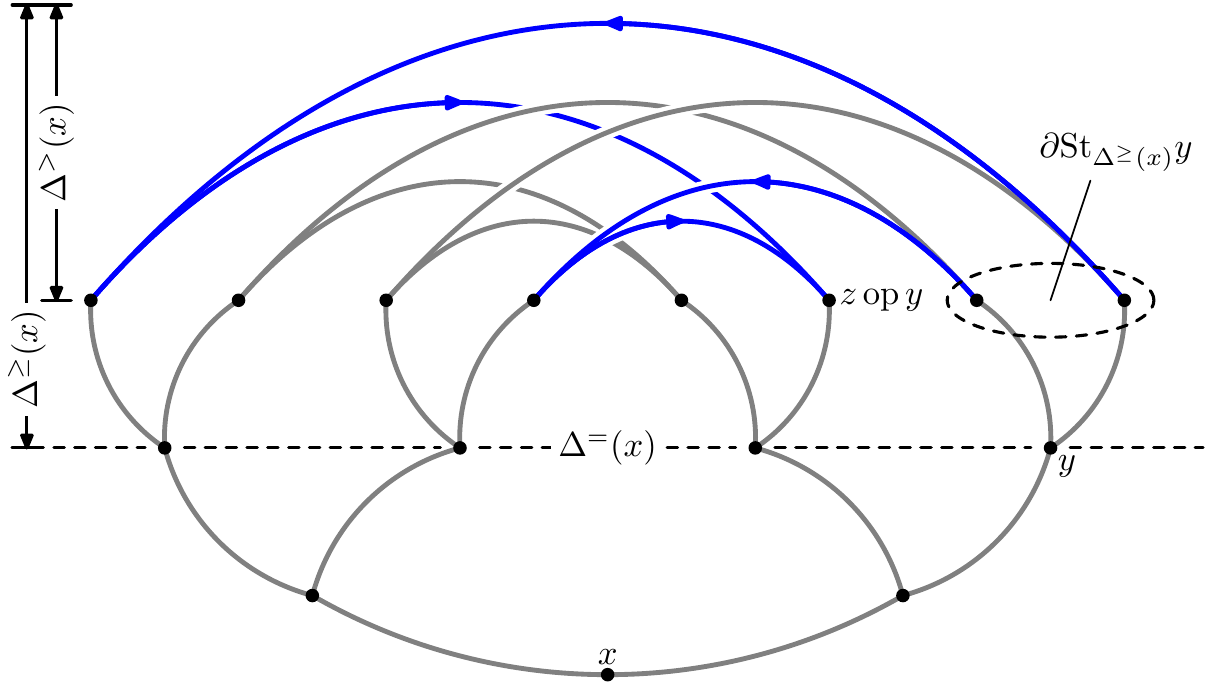}
\end{center}
    \caption{\label{figure:contractingstars}The segments joining $\partial\st_{\Delta^\geq(x)}y$ with $z\op y$ are contained in $\Delta^>(x)$. Hence, the boundary of the $\st_{\Delta^\geq(x)}y$ is contractible in $\Delta^>(x)$. ($\Delta$ is the flag complex of the projective plane over $\finitefield{2}$.)}
\end{figure} 
    %
%   filter.tex
%
%   19.06.2010
%

\subsection{A filtration of closed hemisphere complexes}

Our aim is to define a filtration $\Delta^>(x)=F_0\subset F_1\subset\cdots\subset F_N$ of $F_N=\Delta^\geq(x)$ such that a) $F_k$ is the disjoint union $F_{k-1}\cup\bigcup_{\sigma\in I_k}\st_{F_k}\sigma$ for some set of simplices $I_k\subseteq\simp(F_k)\setminus\simp(F_{k-1})$ and b) the boundary of the relative star $\st_{F_k}\sigma$ is contractible in $F_{k-1}$, for any $\sigma\in I_k$. We begin by describing the major obstacle that needs to be overcome:

For a point $y$ of the equator complex, suppose there is an antipode $z\in\Delta^>(x)$. (In the next subsection we will show that such an antipode exists, provided $\Delta$ is thick.) Any point $u\in\partial\st_{\Delta^\geq(x)}y$ is connected to $z$ by a segment. In an ideal world, we could therefore contract $\partial\st_{\Delta^\geq(x)}\sigma$ inside $\Omega^>(x)$ by geodesically coning off from $z$. This idea works sometimes, but if $u$ lies also in the equator, we would like to see $(u,z]\subseteq\Omega_\Delta^>(x)$. This, however, does not always happen. We explain the obstruction.

Let $\sigma$ be the simplex carrying $x$ and let $\theta$ denote the simplex carrying $u$. As $(u,z]$ is contained in a geodesic segment joining $y$ and $z$, the initial segment $(u,z]\cap\st\theta$ of $(u,z]$ lies in a simplex $\tau$ of $\st\theta$ that is opposite to $\sigma\cup\theta$ in $\st\theta$. If $\sigma\setminus\theta$ is a simplex of a join factor of $\lk\theta$ that lies in $\Omega_\Delta^=(x)$, then $\tau$ is contained in the equator, since $\tau\setminus\theta$ is also a simplex of that join factor. In this case $(u,z]$ can not be contained in $\Omega_\Delta^>(x)$ regardless of which antipode $z$ of $y$ we use.

To avoid this problem, we construct a filtration such that the links of the simplices in the boundaries of the relative stars do not have join factors in $\Omega_\Delta^=(x)$. This will be done by defining a restriction map $\simp(\Delta^\geq(x))\maps\simp(\Delta^=(x))$,
mapping each simplex to one of its faces and decomposing $\Delta^\geq(x)$ into the relative stars we would like to have. Afterwards the filtration is obtained by a partial ordering on the image of the restriction map.

\begin{definition}\label{filter:def-restriction}
    For a chamber $C\in\cham(\Delta)$ and a vertex $v$ of $C$ let $C_v=C\setminus v$ denote its complementary face. For a simplex $\sigma\in\simp(\Delta)$ let $\sigma_x^=$ be its maximal face contained in the equator complex with respect to the pole $x$. The map $\restr_\Delta^x:\simp(\Delta^\geq(x))\longrightarrow\simp(\Delta^=(x))$ defined by
    \begin{equation*}
        \vt\restr_\Delta^x(\sigma)=\{ v\in\vt(\sigma_x^=)\mid \exists\
        C\in\cham(\st\sigma_x^=)\thickspace :\thickspace \lk
        C_v\not\subseteq\Delta^=(x)\}
    \end{equation*}
    (see figure \ref{figure:restriction-map}) is called the ($\Delta,x$)--restriction on $\Delta^\geq(x)$.
\end{definition}

\begin{figure}[ht]
\begin{center}
    \includegraphics[scale=0.8]{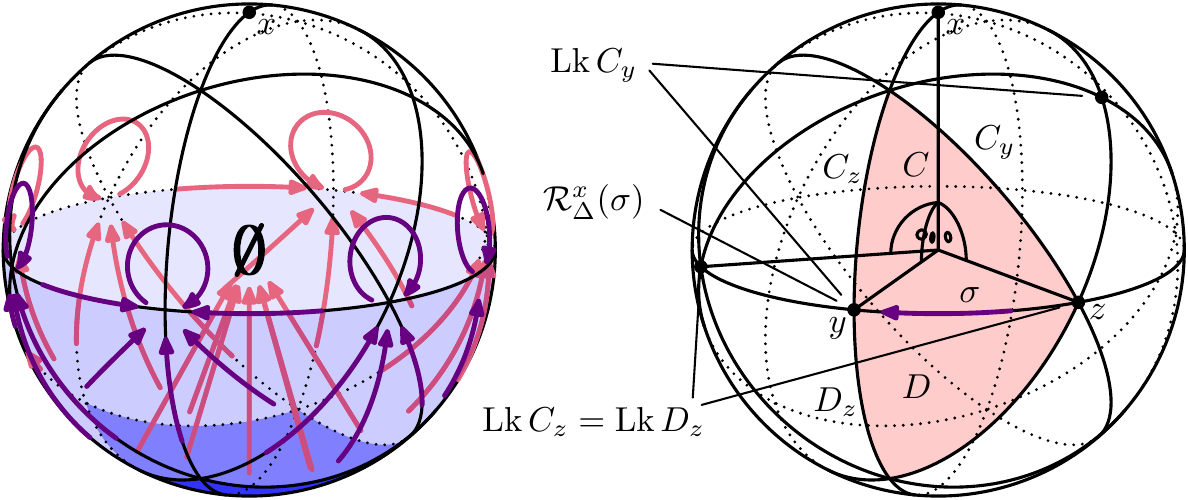}
\end{center}
    \caption{\label{figure:restriction-map}The ($\Delta,x$)-restriction in an apartment of $\mathrm{Flag}(\projspace{3}(F))$ containing $x$. ($x$ is the midpoint of an edge joining a point and a hyperplane of $\projspace{3}(F)$.)}
\end{figure}

\begin{lemma}\label{filter:join}
    For $x, y\in\Delta$ the following statements are equivalent:
    \begin{itemize}
    \item[\rm{a)}]
        $\Delta=\Delta_x*\Delta_y$ decomposes as a spherical join, s.t. $x\in\Delta_x$ and $y\in\Delta_y$.
    \item[\rm{b)}]
        $\lk C_v\subseteq\Delta^=(x)$ for any chamber $C$ of $\st y$ and any vertex $v$ of the simplex carrying $y$.
    \end{itemize}
\end{lemma}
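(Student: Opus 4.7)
Direction (a) $\Rightarrow$ (b) is a direct computation with the join structure. Given $\Delta = \Delta_x * \Delta_y$ with $x \in \Delta_x$ and $y \in \Delta_y$, the simplex $\eta$ carrying $y$ lies entirely in $\Delta_y$. Any chamber $C \in \cham(\st\eta)$ factors as $C = C_1 * C_2$ with $C_i \in \cham(\Delta_i)$ and $\eta \subseteq C_2$. For $v \in \vt(\eta)$, the panel $C_v = C_1 \cup (C_2 \setminus \{v\})$ already contains the full chamber $C_1$ of $\Delta_x$, so any $w \in \lk C_v$ must extend $C_v$ on the $\Delta_y$ side; hence $w \in \Delta_y$ and $d(x,w) = \pi/2$ by the definition of spherical join.

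For the converse (b) $\Rightarrow$ (a), I first take $w = v$ in condition (b), which is legitimate since $v \in \lk C_v$ for any $C \supseteq \eta$, to obtain $d(x,v) = \pi/2$ for every $v \in \vt(\eta)$. The plan is then to reduce to Proposition \ref{building:join} by producing a chamber $C \supseteq \eta$ together with a point $x' \in \overline{C}$ satisfying $d(x',y) = \pi/2$; an application of \ref{building:join} to the pair $x', y \in \overline{C}$ then decomposes $\Delta = \Delta_{x'} * \Delta_y$, and since $x$ lies at distance $\pi/2$ from every vertex of $\eta \subseteq \Delta_y$, the type-partition characterization of the join factors forces $x \in \Delta_{x'}$, giving (a).

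To locate such an $x'$, I would work inside an apartment $\Sigma$ containing $x$ together with a chamber $C \supseteq \eta$. In $\Sigma$ condition (b) admits a sharp geometric interpretation: for each chamber $C' \supseteq \eta$ in $\Sigma$ and each $v \in \vt(\eta)$, the set $\lk C'_v \cap \vt(\Sigma)$ consists of $v$ together with its reflection $v'$ across the wall carrying the panel $C'_v$; both vertices lie in $\Delta^=(x)$, so $x$ is equidistant from $v$ and $v'$ and therefore lies on that wall. Intersecting all such walls as $C'$ ranges over the chambers of $\Sigma$ containing $\eta$ and $v$ ranges over $\vt(\eta)$ confines $x$ to a sub-sphere ``dual'' to $\eta$ in $\Sigma$, from which one can select $x' \in \overline{C}$ on the face of $C$ complementary to $\eta$ at distance $\pi/2$ from $y$.

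The main obstacle is this last apartment-level step: one must check carefully that the reflections furnished by (b) generate enough walls to pin $x$ onto the span of the face of $C$ complementary to $\eta$, and then verify that the resulting splitting of $\Sigma$ lifts to a genuine spherical join decomposition of the whole building $\Delta$ via \ref{building:join}. Once this rigidity is in place, the identification of the factor containing $x$ is forced by the distance condition $d(x,\vt(\eta)) = \pi/2$.
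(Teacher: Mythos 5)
There is a genuine gap, and it is precisely the ``main obstacle'' you already flagged. Your auxiliary point $x'$ is a detour: you do not need to manufacture a new point in $\overline{C}$ and then argue that $x$ lands in its join factor. The wall condition you extracted from (b) already forces $x$ itself into $\overline{\st_\Sigma y}$, and that is all you need. Concretely: in an apartment $\Sigma$ containing $x$ and $y$, condition (b) says (via the reflection argument you gave) that $x$ lies on every wall of $\Sigma$ supporting a panel of the form $C_v$ with $C\supseteq\eta$, $v\in\vt(\eta)$. These are exactly the walls bounding $\overline{\st_\Sigma\eta}$, and $\overline{\st_\Sigma\eta}$ is the intersection of the roots they define; $x$ lying on each wall puts $x$ in each such root, hence $x\in\overline{\st_\Sigma\eta}$. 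In particular $x$ and $y$ lie in the closure of a common chamber $C$. Since you already established $d(x,v)=\pi/2$ for every $v\in\vt(\eta)$, and $x,y,v\in\overline{C}$ lie in a common apartment, the spherical law of cosines (with Corollary \ref{building:chamberdiameter} and Lemma \ref{building:link-metric} controlling $d(v,y)$ and $\angle_v(x,y)$) gives $d(x,y)=\pi/2$ directly. Now Proposition \ref{building:join} applies to the pair $x,y\in\overline{C}$ and yields (a) immediately; no $x'$ and no type-partition argument for which factor contains $x$ are needed.

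Your geometric picture of ``intersecting walls to pin $x$ onto a dual sub-sphere'' is also not quite right in general: the bounding walls of $\overline{\st_\Sigma\eta}$ need not have a common non-empty intersection (consider $\eta$ a vertex in a rank-$2$ Coxeter complex), so the intersection of the walls is the wrong object. What (b) legitimately gives is membership in each root cut out by such a wall, and the intersection of roots is the closed star. The direction (a)$\Rightarrow$(b) in your write-up is fine and matches the paper's one-line dismissal.
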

\begin{proof}
    The implication a) $\Rightarrow$ b) is clear. Suppose b) holds. Let $\Sigma$ be an apartment containing $x$ and $y$. Then b) means that $x$ is contained in any wall bordering $\st_\Sigma y$. Therefore $x$ is a point of $\overline{\st_\Sigma y}$. Since $d(x,v)=\pi/2$ for any vertex $v$ of the simplex carrying $y$, we get $d(x,y)=\pi/2$ from \ref{building:chamberdiameter}, \ref{building:link-metric} and the spherical law of cosines. Hence, a) follows by \ref{building:join}.
\end{proof}

\begin{cor}\label{filter:empty}
    Let $\sigma$ be a simplex of $\Delta^\geq(x)$. Then
    \begin{equation*}
        \restr_\Delta^x(\sigma)=\emptyset \Longleftrightarrow
        \sigma^=_x \text{ is a simplex of }\Delta_\hor(x).
    \end{equation*}
\end{cor}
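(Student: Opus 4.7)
The plan is to derive this corollary as an essentially direct translation of Lemma~\ref{filter:join} once the quantifiers are aligned. The key observation is that the condition defining $\restr_\Delta^x(\sigma)=\emptyset$, namely $\lk C_v\subseteq\Delta^=(x)$ for every vertex $v$ of $\sigma_x^=$ and every chamber $C\in\cham(\st\sigma_x^=)$, is exactly hypothesis b) of Lemma~\ref{filter:join} applied with any point $y$ chosen in the relative interior of $\sigma_x^=$, because then the simplex carrying $y$ equals $\sigma_x^=$ and $\st y=\st\sigma_x^=$.

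First I would dispose of the degenerate case $\sigma_x^==\emptyset$: the quantifier in the definition of $\restr_\Delta^x(\sigma)$ is vacuous, so $\restr_\Delta^x(\sigma)=\emptyset$, and the empty simplex lies in $\simp(\Delta_\hor(x))$ by the usual convention; both sides of the equivalence are therefore true. Assume henceforth $\sigma_x^=\neq\emptyset$ and fix $y$ in its interior. For the forward direction, $\restr_\Delta^x(\sigma)=\emptyset$ supplies hypothesis b) of Lemma~\ref{filter:join}, yielding a spherical join decomposition $\Delta=\Delta_x*\Delta_y$ with $x\in\Delta_x$ and $y\in\Delta_y$. Since every point of $\Delta_y$ is at distance $\pi/2$ from every point of $\Delta_x$, and in particular from $x$, the factor $\Delta_y$ is contained in $\Omega^=_\Delta(x)$, hence in the maximal such join factor $\Delta_\hor(x)$. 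As $\Delta_y$ is a subcomplex containing $y$, it contains the simplex carrying $y$, which gives $\sigma_x^=\in\simp(\Delta_\hor(x))$.

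For the converse, if $\sigma_x^=\in\simp(\Delta_\hor(x))$ then $y\in\Delta_\hor(x)$, so the canonical decomposition $\Delta=\Delta_\ver(x)*\Delta_\hor(x)$ already realizes hypothesis a) of Lemma~\ref{filter:join} with $\Delta_x:=\Delta_\ver(x)$ and $\Delta_y:=\Delta_\hor(x)$, and the a)$\Rightarrow$b) direction of that lemma returns precisely the defining condition $\restr_\Delta^x(\sigma)=\emptyset$. There is no genuine obstacle: the only care needed is the quantifier matching pointed out in the first paragraph, together with the observation that $\Delta_y$ from Lemma~\ref{filter:join} is automatically a join factor landing in $\Omega^=_\Delta(x)$, which lets me identify it with (a subcomplex of) $\Delta_\hor(x)$ by maximality.
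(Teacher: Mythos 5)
Your proof is correct and is exactly the intended deduction: since $\restr_\Delta^x(\sigma)=\emptyset$ unwinds to condition b) of Lemma~\ref{filter:join} for a point $y$ in the interior of $\sigma_x^=$, the equivalence with the join decomposition a) gives the result, with the two directions handled just as you describe via maximality of $\Delta_\hor(x)$ and the canonical decomposition $\Delta=\Delta_\ver(x)*\Delta_\hor(x)$. The degenerate case $\sigma_x^==\emptyset$ is also correctly dispatched.
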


\begin{notation}
    For any simplex $\sigma\in\simp(\Delta)$ let $\lambda_\sigma$ denote the simplicial map $\lambda_\sigma:\overline{\st\sigma}\rightarrow\st\sigma; \tau\mapsto\sigma\cup\tau$.
\end{notation}

\begin{lemma}\label{filter:link}
    Let $\tau$ be a simplex of $\Delta^=(x)$ and let $\sigma$ be a face of $\tau$. Then we have $\restr_{\lk\sigma}^{p_\sigma x}(\tau\setminus\sigma)=\restr_\Delta^x(\tau)\setminus\sigma$ and $\restr_\Delta^x(\tau)\cap\sigma$ is a face of $\restr_\Delta^x(\sigma)$.
\end{lemma}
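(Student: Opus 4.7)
My plan is to work directly with the vertex-set definition of the two restriction maps and to transfer the condition on chambers and their links between $\Delta$ and $\lk\sigma$. Since $\tau\in\simp(\Delta^=(x))$ and $\sigma\leq\tau$, both $\sigma$ and $\tau$ lie in $\Delta^=(x)$, so $\sigma_x^==\sigma$ and $\tau_x^==\tau$; by Lemma \ref{hc:equator-link}, the face $\tau\setminus\sigma$ lies in $(\lk\sigma)^=(p_\sigma x)$, hence $(\tau\setminus\sigma)_{p_\sigma x}^==\tau\setminus\sigma$ as well.

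For the first equality I would use the standard bijection $C\longleftrightarrow C\setminus\sigma$ between $\cham(\st_\Delta\tau)$ and $\cham(\st_{\lk\sigma}(\tau\setminus\sigma))$: every chamber of $\Delta$ containing $\tau$ has the form $(C\setminus\sigma)\cup\sigma$, where $C\setminus\sigma$ is a chamber of $\lk\sigma$ containing $\tau\setminus\sigma$, and vice versa. For a vertex $v\in\vt(\tau)\setminus\vt(\sigma)=\vt(\tau\setminus\sigma)$ one computes $C_v=(C\setminus\sigma)_v\cup\sigma$, so $\sigma\subseteq C_v$, and a direct check on simplices gives $\lk_\Delta C_v=\lk_{\lk\sigma}(C\setminus\sigma)_v$ as subcomplexes of $\lk\sigma$. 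By fullness of the equator complex (\ref{hc:full-subcomplex}), both inclusions $\lk_\Delta C_v\subseteq\Delta^=(x)$ and $\lk_{\lk\sigma}(C\setminus\sigma)_v\subseteq(\lk\sigma)^=(p_\sigma x)$ are tested on vertices, and by Lemma \ref{hc:equator-link} these two vertex-level tests coincide. Running the bijection over all such $C$ therefore yields $v\in\vt\restr_\Delta^x(\tau)$ if and only if $v\in\vt\restr_{\lk\sigma}^{p_\sigma x}(\tau\setminus\sigma)$, which is the claimed equality.

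For the second assertion it suffices to establish the vertex containment $\vt\restr_\Delta^x(\tau)\cap\vt(\sigma)\subseteq\vt\restr_\Delta^x(\sigma)$, from which $\restr_\Delta^x(\tau)\cap\sigma$ is automatically a face of $\restr_\Delta^x(\sigma)$. This part is purely formal: if $v\in\vt(\sigma)\subseteq\vt(\tau)$ and $C\in\cham(\st\tau)$ witnesses $\lk C_v\not\subseteq\Delta^=(x)$, then $C\in\cham(\st\sigma)$ because $\sigma\leq\tau$ forces $\st\tau\subseteq\st\sigma$, and the very same complementary face $C_v=C\setminus v$ witnesses $v\in\vt\restr_\Delta^x(\sigma)$.

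The only non-trivial bookkeeping step is the identification $\lk_\Delta C_v=\lk_{\lk\sigma}(C\setminus\sigma)_v$ together with the translation of the equator condition via Lemma \ref{hc:equator-link}; once these identifications are in place, both claims reduce to tracking vertex sets.
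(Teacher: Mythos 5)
Your proof is correct and follows essentially the same route as the paper: you use the bijection $C\leftrightarrow C\setminus\sigma$ between chambers of $\st\tau$ and of $\st_{\lk\sigma}(\tau\setminus\sigma)$ (the paper phrases this via $\lambda_\sigma$), identify $\lk_\Delta C_v=\lk_{\lk\sigma}(C\setminus\sigma)_v$, invoke Lemma~\ref{hc:equator-link} to equate the equator conditions, and settle the second assertion by $\st\tau\subseteq\st\sigma$. The extra invocation of fullness (\ref{hc:full-subcomplex}) is harmless but not needed, since $(\lk\sigma)^=(p_\sigma x)=\lk\sigma\cap\Delta^=(x)$ already makes the two inclusion conditions literally identical.
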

\begin{proof}
    $\lambda_\sigma$ identifies $\st_{\lk\sigma}\tau\setminus\sigma$ with $\st\tau$. Note that for any vertex $v\in\vt(\tau\setminus\sigma)$ and any chamber $C$ of
    $\st_{\lk\sigma}(\tau\setminus\sigma)$ we have $\lk_{\lk\sigma} C_v=\lk (\lambda_\sigma C)_v$. By \ref{hc:equator-link} we know that $(\lk\sigma)^=(p_\sigma x)$ equals $\lk\sigma\cap\Delta^=(x)$. Hence, using the definition one gets $\restr_{\lk\sigma}^{p_\sigma x}(\tau\setminus\sigma)=\restr_\Delta^x(\tau)\setminus\sigma$. The second assertion is clear, since $\st\sigma$ contains $\st\tau$.
\end{proof}

\begin{cor}\label{filter:idempotent}
    The map $\restr_\Delta^x$ is idempotent and its image
    $\im\restr_\Delta^x$ is a subcomplex of $\Delta_\ver(x)$.
\end{cor}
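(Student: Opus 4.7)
\medskip\noindent
\textbf{Proof plan.} I would verify the three assertions---idempotence, closure of the image under taking faces, and containment in $\Delta_\ver(x)$---by unwinding Definition \ref{filter:def-restriction} and invoking Lemmas \ref{filter:join} and \ref{filter:link}. The starting observation is that $\restr_\Delta^x(\sigma)$ depends only on $\sigma_x^=$, since both $\vt(\sigma_x^=)$ and $\st\sigma_x^=$ do; in particular, for any $\tau\in\Delta^=(x)$ we have $\tau_x^==\tau$, and $\restr_\Delta^x(\tau)$ is computed using $\tau$ itself.

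To prove idempotence I would set $\tau:=\restr_\Delta^x(\sigma)$, so $\tau\leq\sigma_x^=$ and hence $\st\sigma_x^=\subseteq\st\tau$. For each $v\in\vt(\tau)$, the definition furnishes a chamber $D\in\cham(\st\sigma_x^=)$ with $\lk D_v\not\subseteq\Delta^=(x)$; this same $D$ lies in $\cham(\st\tau)$ and witnesses $v\in\vt\restr_\Delta^x(\tau)$. Together with the automatic inclusion $\restr_\Delta^x(\tau)\leq\tau$, this forces $\restr_\Delta^x(\tau)=\tau$.

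Closure of $\im\restr_\Delta^x$ under faces is then formal from Lemma \ref{filter:link}: if $\tau\in\im\restr_\Delta^x$, meaning $\restr_\Delta^x(\tau)=\tau$, and $\sigma\leq\tau$, then $\sigma=\restr_\Delta^x(\tau)\cap\sigma$ is a face of $\restr_\Delta^x(\sigma)$, while $\restr_\Delta^x(\sigma)\leq\sigma_x^=\leq\sigma$, forcing $\restr_\Delta^x(\sigma)=\sigma$.

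The last assertion is where the geometry enters, and I expect it to be the main obstacle. I would assume for contradiction that some $\tau\in\im\restr_\Delta^x$ has a vertex $v\in\vt(\Delta_\hor(x))$. The decomposition $\Delta=\Delta_\ver(x)*\Delta_\hor(x)$ places $x$ and $v$ in complementary join factors, so condition a) of Lemma \ref{filter:join} holds with $y=v$; the equivalent condition b) then yields $\lk C_v\subseteq\Delta^=(x)$ for every chamber $C\in\cham(\st v)\supseteq\cham(\st\tau)$, contradicting $v\in\vt(\restr_\Delta^x(\tau))$. Matching the definitional obstruction $\lk D_v\not\subseteq\Delta^=(x)$ with the join criterion of Lemma \ref{filter:join} is the only nonroutine ingredient.
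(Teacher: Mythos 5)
Your proof is correct and follows essentially the same route as the paper's: Lemma \ref{filter:link} gives closure of the image under faces (hence also idempotence, which the paper extracts in one stroke by applying \ref{filter:link} to the faces of $\restr_\Delta^x(\tau)$ rather than verifying idempotence separately from the definition as you do), and the containment in $\Delta_\ver(x)$ reduces to the join criterion — you invoke Lemma \ref{filter:join} directly at the vertex level, the paper passes through its Corollary \ref{filter:empty}, but these are the same observation.
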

\begin{proof}
    By \ref{filter:link} any face $\sigma$ of $\restr_\Delta^x(\tau)$ is a face of $\restr_\Delta^x(\sigma)$, hence $\restr_\Delta^x(\sigma)=\sigma$ for all faces of $\restr_\Delta^x(\tau)$. According to \ref{filter:empty} a simplex $\sigma\in\im\restr_\Delta^x$ is contained $\Delta_\hor(x)$ if and only if $\sigma=\restr_\Delta^x(\sigma)=\emptyset$.
\end{proof}

\begin{lemma}\label{filter:equal-restriction}
    Let $\tau$ be a simplex of $\Delta^\geq(x)$ and let $\sigma$ be a face of $\tau^=_x$. The following statements are equivalent:
    \begin{itemize}
        \item[{\rm a)}]
            $(\tau\setminus\sigma)^=_x$ is a simplex in $(\lk\sigma)_\hor(p_\sigma x)$.
        \item[{\rm b)}]
            $\restr_\Delta^x(\tau)$ is a face of $\sigma$.
        \item[{\rm c)}]
            $\restr_\Delta^x(\tau)=\restr_\Delta^x(\sigma)$.
    \end{itemize}
\end{lemma}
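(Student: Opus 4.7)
The plan is to establish the chain (a)$\Leftrightarrow$(b) and (b)$\Leftrightarrow$(c), in both cases by descending into the link of an equator face and invoking \ref{filter:empty} and \ref{filter:link}. The direction (c)$\Rightarrow$(b) is trivial, and (a)$\Leftrightarrow$(b) is essentially a translation of the definitions; the main obstacle will be (b)$\Rightarrow$(c), for which I will pass to $\lk\restr_\Delta^x(\tau)$ and use \ref{filter:idempotent} to restart the argument there.

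For (a)$\Leftrightarrow$(b), observe first that $\sigma=\sigma^=_x$ since $\sigma\leq\tau^=_x$, and that $\restr_\Delta^x(\tau)=\restr_\Delta^x(\tau^=_x)$ straight from the definition of the restriction map. By \ref{hc:equator-link} the equator of $\lk\sigma$ at $p_\sigma x$ is $\lk\sigma\cap\Delta^=(x)$, so $(\tau\setminus\sigma)^=_x$ and $(\tau\setminus\sigma)^=_{p_\sigma x}$ denote the same simplex of $\lk\sigma$. Applying \ref{filter:empty} inside $\lk\sigma$ and then \ref{filter:link} to $\tau^=_x$ and its face $\sigma$ gives
\begin{equation*}
    \text{(a)}\ \Longleftrightarrow\ \restr_{\lk\sigma}^{p_\sigma x}(\tau^=_x\setminus\sigma)=\emptyset\ \Longleftrightarrow\ \restr_\Delta^x(\tau^=_x)\setminus\sigma=\emptyset\ \Longleftrightarrow\ \restr_\Delta^x(\tau)\leq\sigma,
\end{equation*}
which is (b).

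For the second equivalence, (c)$\Rightarrow$(b) follows from $\restr_\Delta^x(\sigma)\leq\sigma$. So assume (b) and write $\rho=\restr_\Delta^x(\tau)$, so that $\rho\leq\sigma\leq\tau$. The inclusion $\rho\leq\restr_\Delta^x(\sigma)$ is immediate from the second assertion of \ref{filter:link}, applied to $\tau^=_x$ and $\sigma$: the intersection $\rho\cap\sigma=\rho$ is a face of $\restr_\Delta^x(\sigma)$. For the reverse inclusion I pass to $\lk\rho$: combining \ref{filter:idempotent} with \ref{filter:link} (applied to $\tau^=_x$ and its face $\rho$) yields
\begin{equation*}
    \restr_{\lk\rho}^{p_\rho x}(\tau^=_x\setminus\rho)=\restr_\Delta^x(\tau^=_x)\setminus\rho=\rho\setminus\rho=\emptyset,
\end{equation*}
so \ref{filter:empty} places $\tau^=_x\setminus\rho$ inside $(\lk\rho)_\hor(p_\rho x)$. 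Since $\sigma\setminus\rho$ is a face of $\tau^=_x\setminus\rho$, it lies in the horizontal factor too, and a second application of \ref{filter:empty} and \ref{filter:link} (this time to $\sigma$ and its face $\rho$) gives $\restr_\Delta^x(\sigma)\setminus\rho=\emptyset$, i.e., $\restr_\Delta^x(\sigma)\leq\rho$. Together with the earlier inclusion this proves (c).
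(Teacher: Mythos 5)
Your proof is correct and follows essentially the same route as the paper: reduce to the case $\tau=\tau^=_x$, get (a)$\Leftrightarrow$(b) directly from \ref{filter:link} and \ref{filter:empty}, and prove (b)$\Rightarrow$(c) by establishing both inclusions via \ref{filter:link}, \ref{filter:empty}, and \ref{filter:idempotent}. The only cosmetic difference is that for the reverse inclusion you argue that $\sigma\setminus\rho$ itself lies in $(\lk\rho)_\hor(p_\rho x)$ and then take its restriction, whereas the paper first forms $\restr_\Delta^x(\sigma)\setminus\rho$ and then argues it lies in the horizontal factor; these are the same argument in different order.
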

\begin{proof}
    We may suppose $\tau=\tau^=_x$. By \ref{filter:link} and \ref{filter:empty} the simplex $\tau\setminus\sigma$ is contained in $(\lk\sigma)_\hor(p_\sigma x)$ if and only if $\restr_\Delta^x(\tau)\setminus\sigma=\emptyset$ holds, within if and only if $\restr_\Delta^x(\tau)$ is a face of $\sigma$. Hence, a) $\Leftrightarrow$ b).

    The implication c) $\Rightarrow$ b) is obvious, since $\restr_\Delta^x(\sigma)$ is always a face of $\sigma$. We show b) $\Rightarrow$ c): Assume $\restr_\Delta^x(\tau)$ is a face of $\sigma$. Then $\restr_\Delta^x(\tau)$ is a face of $\restr_\Delta^x(\sigma)$ according to \ref{filter:link}. Conversely, $\tau\setminus\restr_\Delta^x(\tau)$, within its face $\restr_\Delta^x(\sigma)\setminus\restr_\Delta^x(\tau)$ is contained in $(\lk\restr_\Delta^x(\tau))_\hor(p_{\restr_\Delta^x(\tau)}x)$ by b) $\Rightarrow$ a). Then $\restr_\Delta^x(\sigma)$ is a face of $\restr_\Delta^x(\tau)$ by \ref{filter:link} and \ref{filter:idempotent}.
\end{proof}

\noindent By lemma \ref{filter:equal-restriction} the $\restr_\Delta^x$--preimage of $\tau\in\simp(\im\restr_\Delta^x$) is $\st_F\tau$ for some subcomplex $F\subseteq\Delta^\geq(x)$. Moreover it holds $\lk_F\tau=(\lk\tau)^>(p_\tau x)*(\lk\tau)_\hor(p_\tau x)$ as desired. It also would have been possible to use the equivalence of lemma \ref{filter:equal-restriction} as a definition of the restriction map. This was done in \cite{buwo2} and \cite{bgw} by K.-U.~Bux, R.~Gramlich, S.~Witzel and K.~Wortman to define a filtration of Euclidean buildings whose relative links should get the above properties.

\begin{definition}\label{filter:def-order}
    For two simplices $\sigma$, $\tau$ of $\im\restr_\Delta^x$ we define $\sigma\preceq\tau$ if and only if the upper bound $\sigma\cup\tau$ exists and $\restr_\Delta^x(\sigma\cup\tau)=\tau$. We write $\sigma\prec\tau$ if and only if $\sigma\preceq\tau$ but not $\tau\preceq\sigma$.
\end{definition}

\begin{lemma}\label{filter:order}
    Let $\sigma$ be a simplex of $\Delta^=(x)$ and let $\tau$ be a simplex of $\im\restr_\Delta^x$. Suppose $\restr_\Delta^x(\sigma)\preceq\tau$. Then $\sigma\cup\tau$ exists and $\restr_\Delta^x(\sigma\cup\tau)=\tau$.
\end{lemma}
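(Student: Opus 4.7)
The plan is to work in the link of $\rho := \restr_\Delta^x(\sigma)$ and exploit the join decomposition available there. Set $\mu := \rho \cup \tau$, which exists by the hypothesis $\rho \preceq \tau$, and put $\nu := \sigma \cup \tau$, whose existence is the first thing to establish. Since $\sigma, \tau \in \Delta^=(x)$, any upper bound containing both lies in $\Delta^=(x)$; in particular $\mu_x^= = \mu$ and, once produced, $\nu_x^= = \nu$.

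To produce $\nu$, I would work inside $\lk \rho$. Lemma \ref{filter:equal-restriction} applied to $\sigma$ and its face $\rho = \restr_\Delta^x(\sigma)$ places $\sigma \setminus \rho$ in the horizontal factor $(\lk \rho)_\hor(p_\rho x)$. Meanwhile, Lemma \ref{filter:link} gives $\restr_{\lk \rho}^{p_\rho x}(\mu \setminus \rho) = \restr_\Delta^x(\mu) \setminus \rho = \tau \setminus \rho$, and since $\mu \setminus \rho$ and $\tau \setminus \rho$ have identical vertex sets, this simplex is its own restriction and therefore lies in $\im \restr_{\lk \rho}^{p_\rho x} \subseteq (\lk \rho)_\ver(p_\rho x)$ by Corollary \ref{filter:idempotent}. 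Because $\lk \rho = (\lk \rho)_\ver(p_\rho x) * (\lk \rho)_\hor(p_\rho x)$ is a spherical join, two simplices landing in complementary factors automatically admit an upper bound, so $(\sigma \setminus \rho) \cup (\tau \setminus \rho)$ is a simplex of $\lk \rho$; adjoining $\rho$ delivers the required $\nu = \sigma \cup \tau$.

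For the identity $\restr_\Delta^x(\nu) = \tau$ I would aim at condition (b) of Lemma \ref{filter:equal-restriction} applied to $\nu$ and its face $\tau \leq \nu_x^= = \nu$, namely $\restr_\Delta^x(\nu) \leq \tau$; the implication (b)$\Rightarrow$(c) combined with $\tau \in \im \restr_\Delta^x$ then upgrades this to equality. So I would take an arbitrary $v \in \vt(\nu) \setminus \vt(\tau) = \vt(\sigma) \setminus \vt(\tau)$ and exclude it from $\vt \restr_\Delta^x(\nu)$ via the chamber criterion in Definition \ref{filter:def-restriction}, splitting into two cases. If $v \notin \vt(\rho)$, then $v \notin \vt \restr_\Delta^x(\sigma)$ supplies $\lk C_v \subseteq \Delta^=(x)$ for every $C \in \cham(\st \sigma)$, and the inclusion $\cham(\st \nu) \subseteq \cham(\st \sigma)$ (since $\sigma \leq \nu$) transfers the condition. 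If instead $v \in \vt(\rho) \setminus \vt(\tau)$, then $v \notin \vt \restr_\Delta^x(\mu)$ supplies the same condition over $\cham(\st \mu)$, and now $\cham(\st \nu) \subseteq \cham(\st \mu)$ (since $\mu \leq \nu$) does the transfer.

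The main subtle point I expect is the first step: recognizing that Lemma \ref{filter:link} forces $\tau \setminus \rho$ to be vertical in $\lk \rho$ while Lemma \ref{filter:equal-restriction} forces $\sigma \setminus \rho$ to be horizontal, so that the join structure hands the upper bound to us for free. After that the second step is clean bookkeeping, partitioning $\vt(\sigma) \setminus \vt(\tau)$ along $\vt(\rho)$ and exploiting the two star-inclusions $\cham(\st \nu) \subseteq \cham(\st \sigma)$ and $\cham(\st \nu) \subseteq \cham(\st \mu)$.
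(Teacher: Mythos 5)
Your proof is correct and takes essentially the same route as the paper. The first half (existence of $\sigma\cup\tau$, obtained by showing $\sigma\setminus\rho$ lies in $(\lk\rho)_\hor(p_\rho x)$ via Lemma \ref{filter:equal-restriction} while $\tau\setminus\rho$ lies in $(\lk\rho)_\ver(p_\rho x)$ via Lemmas \ref{filter:link} and \ref{filter:idempotent}) is the paper's argument verbatim, and the second half likewise aims at condition (b) of Lemma \ref{filter:equal-restriction} applied to $\nu$ and then upgrades to (c); the only difference is that the paper invokes Lemma \ref{filter:link} once to obtain $\restr_\Delta^x(\sigma\cup\tau)\leq\rho\cup\tau$ and finishes with $\restr_\Delta^x(\rho\cup\tau)=\tau$, whereas you unpack the chamber criterion of Definition \ref{filter:def-restriction} directly along the two inclusions $\sigma\leq\nu$ and $\rho\cup\tau\leq\nu$ to obtain the (slightly stronger, but equivalent-in-effect) $\restr_\Delta^x(\nu)\leq\tau$, which is really a hands-on rederivation of what Lemma \ref{filter:link} packages.
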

\begin{proof}
    We put $\theta=\restr_\Delta^x(\sigma)$. According to \ref{filter:equal-restriction} $\sigma\setminus\theta$ is a simplex of $(\lk\theta)_\hor(p_{\theta}x)$, whereas $\tau\setminus\theta=\restr_\Delta^x(\tau\cup\theta)\setminus\theta=\restr_{\lk\theta}^{p_{\theta}x}(\tau\setminus\theta)$ is contained in $(\lk\theta)_\ver(p_{\theta}x)$ by \ref{filter:idempotent}. Hence, $\sigma\cup\tau$ exists.

    $\restr_\Delta^x(\sigma\cup\tau)\cap\sigma$ is a face of $\theta$ by \ref{filter:link}. Since $\sigma\setminus\theta$ and
    $\restr_\Delta^x(\sigma\cup\tau)$ are disjoint, the later is a face of $\theta\cup\tau$. From \ref{filter:equal-restriction} we get $\restr_\Delta^x(\sigma\cup\tau)=\restr_\Delta^x(\theta\cup\tau)=\tau$.
\end{proof}

\begin{cor}\label{filter:poset}
    $(\im\restr_\Delta^x,\preceq)$ is a poset with unique minimal
    element $\emptyset$.
\end{cor}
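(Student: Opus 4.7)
The plan is to verify the poset axioms one at a time and then confirm that $\emptyset$ is the unique minimum. Reflexivity and antisymmetry will be almost immediate: for any $\sigma\in\im\restr_\Delta^x$, idempotency (Corollary \ref{filter:idempotent}) gives $\restr_\Delta^x(\sigma\cup\sigma)=\restr_\Delta^x(\sigma)=\sigma$, so $\sigma\preceq\sigma$; and if $\sigma\preceq\tau$ and $\tau\preceq\sigma$, then both equal $\restr_\Delta^x(\sigma\cup\tau)$ and hence coincide.

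For the minimum element, I first note that the definition of the restriction trivially yields $\restr_\Delta^x(\emptyset)=\emptyset$, placing $\emptyset$ in $\im\restr_\Delta^x$. Then, for an arbitrary $\tau\in\im\restr_\Delta^x$, the union $\emptyset\cup\tau=\tau$ exists and $\restr_\Delta^x(\tau)=\tau$ by idempotency, so $\emptyset\preceq\tau$; uniqueness of the minimum then follows from antisymmetry.

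The substantive step—and the main expected obstacle—is transitivity. Suppose $\sigma\preceq\tau$ and $\tau\preceq\rho$. Both $\sigma$ and $\tau$ lie in $\Delta^=(x)$, so by fullness (Observation \ref{hc:full-subcomplex}) the union $\sigma\cup\tau$ is again a simplex of $\Delta^=(x)$, and by hypothesis its restriction is $\tau$ with $\tau\preceq\rho$. Applying Lemma \ref{filter:order} to the pair $(\sigma\cup\tau,\rho)$ will give that $\sigma\cup\tau\cup\rho$ exists and restricts to $\rho$, which in particular forces $\sigma\cup\rho$ to exist. To identify $\restr_\Delta^x(\sigma\cup\rho)$, I will invoke the equivalence (b) $\Leftrightarrow$ (c) of Lemma \ref{filter:equal-restriction} with ambient simplex $\sigma\cup\tau\cup\rho$ and face $\sigma\cup\rho$: since $\restr_\Delta^x(\sigma\cup\tau\cup\rho)=\rho$ is a face of $\sigma\cup\rho$, condition (b) is fulfilled, and (c) yields $\restr_\Delta^x(\sigma\cup\rho)=\restr_\Delta^x(\sigma\cup\tau\cup\rho)=\rho$. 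This proves $\sigma\preceq\rho$ and completes the argument.
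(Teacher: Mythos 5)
Your proof is correct and follows essentially the same route as the paper: for transitivity, first apply Lemma \ref{filter:order} to $\sigma\cup\tau$ (whose restriction is $\tau\preceq\rho$) to obtain the existence of $\sigma\cup\tau\cup\rho$ with restriction $\rho$, then apply the equivalence (b) $\Leftrightarrow$ (c) of Lemma \ref{filter:equal-restriction} to the face $\sigma\cup\rho$ to conclude $\restr_\Delta^x(\sigma\cup\rho)=\rho$. You are a bit more explicit than the paper about reflexivity, antisymmetry, and the minimality of $\emptyset$, which the paper leaves implicit, but the substantive argument is identical.
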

\begin{proof}
    Let $\sigma$, $\tau$, and $\theta$ be simplices of $\im\restr_\Delta^x$. Suppose $\sigma\preceq\tau$ and $\tau\preceq\theta$. Then the upper bound $\sigma\cup\tau\cup\theta$ exists and $\restr_\Delta^x(\sigma\cup\tau\cup\theta)=\theta$ by \ref{filter:order}. From \ref{filter:equal-restriction} follows $\restr_\Delta^x(\sigma\cup\theta)=\theta$, since $\restr_\Delta^x(\sigma\cup\tau\cup\theta)$ is a face of $\sigma\cup\theta$. Hence, $\sigma\preceq\tau$.
\end{proof}

\noindent Suppose $\sigma_0\prec\sigma_1\prec\cdots$ is a strictly increasing sequence in $(\im\restr_\Delta^x,\preceq$). According to \ref{filter:order} the upper bounds $\sigma_0\cup\sigma_1\cup\cdots\cup\sigma_k$ exist and the sequence of their ranks is strictly increasing. Hence, the length of any strictly increasing sequence in $(\im\restr_\Delta^x,\preceq$) is bounded from above by $\rank\Delta^=_\ver(x)$. I.e. strictly increasing sequences are finite.

\begin{definition}\label{filter:def-height}
    For a simplex $\sigma$ of $\Delta^\geq(x)$ let its height $\height(\sigma)$ be the length of the longest strictly increasing chain in $(\im\restr_\Delta^x,\preceq)$ ending with $\restr_\Delta^x(\sigma)$.
\end{definition}

\begin{lemma}\label{filter:faceheight}
    Let $\tau$ be a simplex of $\Delta^\geq(x)$. Then $\height(\sigma)\leq\height(\tau)$, for any face $\sigma$ of $\tau$. Equality holds if and only if $\restr_\Delta^x(\sigma)=\restr_\Delta^x(\tau)$.
\end{lemma}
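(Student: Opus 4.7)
The plan is to reduce both claims to the single monotonicity statement: \emph{if $\sigma\leq\tau$ are simplices of $\Delta^{\geq}(x)$, then $\restr_\Delta^x(\sigma)\preceq\restr_\Delta^x(\tau)$ in the poset $(\im\restr_\Delta^x,\preceq)$.} Once this is established, both the height inequality and the equality condition will drop out from the definition of height and the antisymmetry of $\preceq$ recorded in Corollary \ref{filter:poset}.

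Set $a=\restr_\Delta^x(\sigma)$ and $b=\restr_\Delta^x(\tau)$. Since $a\leq\sigma_x^=\leq\tau_x^=$ and $b\leq\tau_x^=$, the union $a\cup b$ exists as a face of $\tau_x^=$. To show $a\preceq b$ it then remains to verify $\restr_\Delta^x(a\cup b)=b$. Here I would apply the implication (b)$\Rightarrow$(c) of Lemma \ref{filter:equal-restriction} to the original $\tau$, with the role of ``$\sigma$'' in that lemma played by $a\cup b$: the hypothesis that $\restr_\Delta^x(\tau)=b$ is a face of $a\cup b$ is immediate, and the conclusion yields $\restr_\Delta^x(a\cup b)=\restr_\Delta^x(\tau)=b$. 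This short step is the only nontrivial point of the argument.

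Given monotonicity, the height inequality follows by lifting chains: take a strictly increasing chain in $(\im\restr_\Delta^x,\preceq)$ of length $\height(\sigma)$ ending at $a$. If $a=b$ the same chain witnesses $\height(\tau)\geq\height(\sigma)$; if $a\neq b$, antisymmetry of $\preceq$ upgrades $a\preceq b$ to $a\prec b$, so the chain can be prolonged by one step to end at $b$, giving $\height(\tau)\geq\height(\sigma)+1$. This second case also shows $\height(\sigma)<\height(\tau)$, so equality of heights forces $a=b$; the converse implication is immediate since $\height(\cdot)$ depends on a simplex only through its restriction. I do not anticipate further obstacles: all of the structural content sits inside Lemmas \ref{filter:link} and \ref{filter:equal-restriction}, and the argument above is essentially a bookkeeping exercise on top of them.
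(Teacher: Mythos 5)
Your proof is correct and follows essentially the same route as the paper: the key step in both is to apply the implication (b)$\Rightarrow$(c) of Lemma \ref{filter:equal-restriction} with the role of ``$\sigma$'' taken by $\restr_\Delta^x(\sigma)\cup\restr_\Delta^x(\tau)$ to deduce $\restr_\Delta^x(\sigma)\preceq\restr_\Delta^x(\tau)$, after which the height inequality and the equality condition follow from the definition of height and the poset structure. Your write-up is a bit more explicit about why $a\cup b$ is a face of $\tau^=_x$ and about the chain-prolongation argument, but the substance is identical.
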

\begin{proof}
    According to \ref{filter:equal-restriction} we have
    $\restr_\Delta^x(\restr_\Delta^x(\sigma)\cup\restr_\Delta^x(\tau))=
    \restr_\Delta^x(\tau)$, for any face $\sigma$ of $\tau$ (replace $\sigma$ by $\restr_\Delta^x(\sigma)\cup\restr_\Delta^x(\tau)$ and use b) $\Rightarrow$ c)), hence $\restr_\Delta^x(\sigma)\preceq\restr_\Delta^x(\tau)$ and consequently $\height(\sigma)\leq\height(\tau)$. By definition, we have either $\restr_\Delta^x(\sigma)=\restr_\Delta^x(\tau)$ or $\restr_\Delta^x(\sigma)\prec\restr_\Delta^x(\tau)$ (which means $\height(\sigma)<\height(\tau)$).
\end{proof}

\begin{figure}[ht]
\begin{center}
    \includegraphics[scale=0.8]{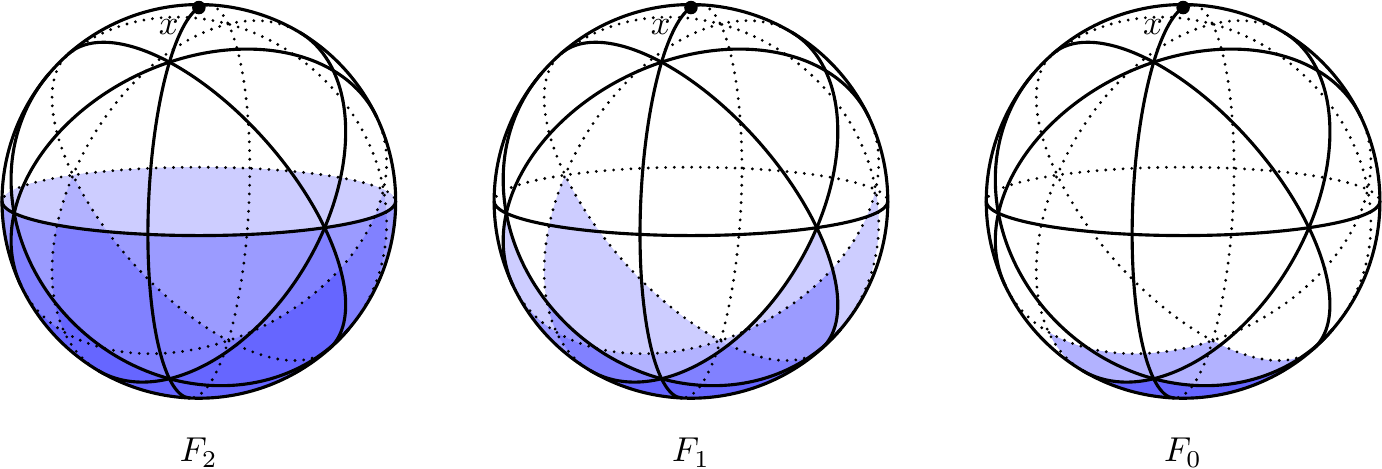}
\end{center}
    \caption{\label{figure:the-filtration}The filtration of an apartment of $\mathrm{Flag}(\projspace{3}(F))$ containing $x$. ($x$ is the midpoint of an edge joining a point and a hyperplane of $\projspace{3}(F)$.)}
\end{figure}

\begin{cordef}\label{filter:def-filter}
    For $0\leq k\leq\rank\Delta_\ver^=(x)$ we define
    \begin{equation*}
        F_k=\bigcup\{ \sigma\in\simp(\Delta^\geq(x))\mid\height(\sigma)\leq k\}.
    \end{equation*}
    Then $\Delta^>(x)*\Delta_\hor(x)=F_0\subseteq F_1\subseteq\cdots\subseteq F_{\rank\Delta_\ver^=(x)}=\Delta^\geq(x)$ is sequence of subcomplexes. The complex $F_k$ is the disjoint union of the previous complex and the relative stars (in $F_k$) of the height-k-simplices from $\im\restr_\Delta^x$. For any height-k-simplex $\sigma$ from $\im\restr_\Delta^x$ the relative star $\st_{F_k}\sigma$ is the preimage of $\sigma$ under ($\Delta,x$)--restriction and $\lk_{F_k}\sigma=(\lk\sigma)^>(p_\sigma x)*(\lk\sigma)_\hor(p_\sigma x)$.
\end{cordef}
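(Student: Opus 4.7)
\textbf{Proof plan for \ref{filter:def-filter}.} The statement bundles several claims, each a consequence of the combinatorial machinery built up in \ref{filter:def-restriction}--\ref{filter:faceheight}. The plan is to dispatch the structural assertions first (each $F_k$ is a subcomplex, the endpoints of the filtration, the disjoint-union stratification) and then derive the link formula. The main step is to identify the relative star $\st_{F_k}\sigma$ with the $\restr_\Delta^x$-preimage of $\sigma$; the ``hard'' part is really only careful bookkeeping against the chain of restriction lemmas.

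That each $F_k$ is a subcomplex is immediate from \ref{filter:faceheight}: taking faces does not increase height. The inclusions $F_{k-1}\subseteq F_k$ are obvious. To identify $F_0$, observe that $\emptyset$ is the unique minimal element of $(\im\restr_\Delta^x,\preceq)$ by \ref{filter:poset}, so $\height(\sigma)=0$ is equivalent to $\restr_\Delta^x(\sigma)=\emptyset$, which by \ref{filter:empty} means $\sigma_x^=\in\Delta_\hor(x)$. Since $\Delta^>(x)$ is a full subcomplex by \ref{hc:full-subcomplex}, the face $\sigma\setminus\sigma_x^=$ lies in $\Delta^>(x)$ automatically; combined with $\sigma_x^=\in\Delta_\hor(x)$, this is precisely the membership criterion for the join $\Delta^>(x)\ast\Delta_\hor(x)$. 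For the top of the filtration, the remark preceding \ref{filter:def-height} bounds the height of every simplex in $\Delta^\geq(x)$ by $\rank\Delta_\ver^=(x)$, so $F_{\rank\Delta_\ver^=(x)}=\Delta^\geq(x)$.

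Now the central step: for $\sigma\in\im\restr_\Delta^x$ with $\height(\sigma)=k$, the relative star $\st_{F_k}\sigma$ coincides with the $\restr_\Delta^x$-preimage of $\sigma$. The inclusion ``preimage $\subseteq\st_{F_k}\sigma$'' is direct, since $\restr_\Delta^x(\tau)=\sigma$ forces $\tau\supseteq\sigma$ and $\height(\tau)=k$. For the converse, let $\tau\in F_k$ contain $\sigma$; by \ref{filter:faceheight} applied to the face $\sigma\le\tau$, $\sigma=\restr_\Delta^x(\sigma)\preceq\restr_\Delta^x(\tau)$, and since $\restr_\Delta^x(\tau)$ has height at most $k$ while $\sigma$ already has height $k$, equality in the poset gives $\restr_\Delta^x(\tau)=\sigma$. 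The disjoint-union description of $F_k$ then follows at once: simplices of height exactly $k$ partition according to their image under $\restr_\Delta^x$, each block being a relative star.

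Finally, the link formula. A simplex $\tau\in\lk\sigma$ lies in $\lk_{F_k}\sigma$ iff $\sigma\cup\tau\in\Delta^\geq(x)$ and $\restr_\Delta^x(\sigma\cup\tau)=\sigma$. The first condition unpacks, via \ref{hc:equator-link}, to $\tau\in(\lk\sigma)^\geq(p_\sigma x)$. Given this, I apply \ref{filter:equal-restriction} with its $\tau$ replaced by $\sigma\cup\tau$ and its $\sigma$ kept as $\sigma$ (a face of $(\sigma\cup\tau)_x^=$ since $\sigma\in\Delta^=(x)$): the equivalence (c)$\Leftrightarrow$(a) translates the restriction condition into $\tau_x^=\in(\lk\sigma)_\hor(p_\sigma x)$. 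Splitting $\tau$ into $\tau_x^=$ and the complementary face $\tau\setminus\tau_x^=$, and using \ref{hc:equator-link} once more to recognise the latter in $(\lk\sigma)^>(p_\sigma x)$, yields the claimed decomposition $\lk_{F_k}\sigma=(\lk\sigma)^>(p_\sigma x)\ast(\lk\sigma)_\hor(p_\sigma x)$.
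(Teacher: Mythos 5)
Your argument is correct, and since the paper states this as a corollary without giving an explicit proof, your reconstruction is essentially the intended derivation from \ref{filter:empty}, \ref{filter:poset}, \ref{filter:faceheight}, \ref{filter:equal-restriction}, and \ref{hc:equator-link}. In particular you correctly identified the two nontrivial steps --- the height-comparison forcing $\restr_\Delta^x(\tau)=\sigma$ for $\tau\in\st_{F_k}\sigma$, and the translation of $\restr_\Delta^x(\sigma\cup\tau)=\sigma$ into the join decomposition of the link via \ref{filter:equal-restriction}~(a)$\Leftrightarrow$(c) together with \ref{hc:equator-link} --- and the bookkeeping is sound.
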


    %
%   subcomp.tex
%
%   15.11.2006
%

\subsection{Constructing subcomplexes of $\mathbf{F_k}$}

In order to carry $\dim\Delta$-sphericity from a stage of our filtration to the previous stage we construct $\dim\Delta$-spherical subcomplexes of the previous stage containing the boundaries of the removed relative stars. In this subsection we are going to develop a blueprint for these complexes.

\begin{notation}
    For any pair $\sigma\op\tau$ of opposite simplices from $\Delta$ and any simplex $\theta$ of $\overline{\st\sigma}$
    we put
    \begin{align*}
        \defclpp(\sigma,\theta,\tau)&=\conv(\lambda_\sigma\theta,\proj_\tau\lambda_\sigma\theta),\\
        \defclp(\sigma,\theta,\tau)&=\defclpp(\sigma,\theta,\tau)\setminus\st\sigma,\\
        \defcl(\sigma,\theta,\tau)&=\defclpp(\sigma,\theta,\tau)\setminus(\st\sigma\cup\st\tau).
    \end{align*}
\end{notation}

\begin{lemma}\label{subc:cone-cap-star-boundary}
    Let $\sigma\op\tau$ be opposite simplices of $\Delta$ and let $\theta$ be a simplex of $\overline{\st\sigma}$. Suppose $\defcl^*$ is one of the complexes $\defclpp(\sigma,\theta,\tau)$, $\defclp(\sigma,\theta,\tau)$ or $\defcl(\sigma,\theta,\tau)$. Then $\defcl^*\cap\ \partial\st\sigma=\partial\lambda_\sigma\theta\setminus\st\sigma$.
\end{lemma}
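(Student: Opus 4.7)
The plan is to reduce the claim to $\defcl^{*}=\defclpp$, dispatch the easy inclusion directly, and then reduce the reverse inclusion to the stronger statement $\defclpp\cap\overline{\st\sigma}=\overline{\lambda_{\sigma}\theta}$. For the first reduction, $\partial\st\sigma=\overline{\st\sigma}\setminus\st\sigma$ is disjoint from $\st\sigma$ by definition and also from $\st\tau$: any simplex $\rho$ of $\overline{\st\sigma}$ containing $\tau$ would fit together with $\sigma$ in a common closed chamber (as $\sigma\cup\rho\subseteq\overline C$ for some $C\supseteq\sigma$), contradicting \ref{building:chamberdiameter} since $\sigma\op\tau$ forces a pair of points in $\sigma$ and $\tau$ at distance $\pi$. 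Hence all three versions of $\defcl^{*}$ meet $\partial\st\sigma$ in the same set, and it suffices to treat $\defclpp$.

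For the easy inclusion $\partial\lambda_{\sigma}\theta\setminus\st\sigma\subseteq\defclpp\cap\partial\st\sigma$, every proper face of $\lambda_{\sigma}\theta$ belongs to $\overline{\lambda_{\sigma}\theta}\subseteq\defclpp$, and if it does not contain $\sigma$ it lies in $\overline{\st\sigma}\setminus\st\sigma\subseteq\partial\st\sigma$. For the reverse inclusion, granting $\defclpp\cap\overline{\st\sigma}=\overline{\lambda_{\sigma}\theta}$, any $\rho\in\defclpp\cap\partial\st\sigma$ is a face of $\lambda_{\sigma}\theta$ which does not contain $\sigma$ and is therefore proper, so $\rho\in\partial\lambda_{\sigma}\theta\setminus\st\sigma$.

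It remains to establish $\defclpp\cap\overline{\st\sigma}\subseteq\overline{\lambda_{\sigma}\theta}$; the reverse is trivial. My approach would be to fix an apartment $\Sigma$ containing $\sigma$, $\tau$, and $\theta$ (which exists because $\lambda_{\sigma}\theta$ and $\tau$ lie in a common apartment), observe that $\defclpp\cap\Sigma$ is the intersection of the roots of $\Sigma$ containing both $\lambda_{\sigma}\theta$ and $\proj_{\tau}\lambda_{\sigma}\theta$, and that $\overline{\st_{\Sigma}\sigma}$ is itself an intersection of roots, and then verify the equality by a direct Coxeter-complex computation. The key combinatorial ingredient is that for opposite simplices $\sigma\op\tau$ the projections $\proj_{\sigma}$ and $\proj_{\tau}$ are mutually inverse between $\st\sigma$ and $\st\tau$; in particular $\proj_{\sigma}\proj_{\tau}\lambda_{\sigma}\theta=\lambda_{\sigma}\theta$, and by the definition of $\proj_{\sigma}$ this identifies $\lambda_{\sigma}\theta$ as the maximal simplex of $\st\sigma\cap\defclpp$. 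Globalising from $\Sigma$ to $\Delta$ is straightforward, since every simplex of $\defclpp\cap\overline{\st\sigma}$ lies in some apartment containing $\lambda_{\sigma}\theta$ and $\tau$. The main obstacle I anticipate is verifying inside $\Sigma$ that $\defclpp$ does not extend back into $\overline{\st_{\Sigma}\sigma}$ beyond $\lambda_{\sigma}\theta$: intuitively the convex hull stretches from $\lambda_{\sigma}\theta$ toward its $\tau$-opposite and meets $\overline{\st_{\Sigma}\sigma}$ only along the base $\overline{\lambda_{\sigma}\theta}$, but making this precise requires enumerating the roots of $\Sigma$ containing both $\lambda_{\sigma}\theta$ and $\proj_{\tau}\lambda_{\sigma}\theta$ and checking that their joint intersection with $\overline{\st_{\Sigma}\sigma}$ is exactly $\overline{\lambda_{\sigma}\theta}$.
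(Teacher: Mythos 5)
You already hold the decisive ingredient but do not deploy it, so the ``main obstacle'' you flag at the end is illusory and the detour through a fixed apartment and an enumeration of roots is unnecessary---and, as presented, left unfinished. You correctly note that $\proj_\sigma\proj_\tau\lambda_\sigma\theta=\lambda_\sigma\theta$ because $\proj_\sigma$ and $\proj_\tau$ are mutually inverse for $\sigma\op\tau$, and that by definition this is the unique maximal simplex of $\conv(\sigma,\proj_\tau\lambda_\sigma\theta)\cap\st\sigma$. The one small identity you need to add is
\begin{equation*}
\conv(\sigma,\proj_\tau\lambda_\sigma\theta)=\conv(\lambda_\sigma\theta,\proj_\tau\lambda_\sigma\theta)=\defclpp(\sigma,\theta,\tau),
\end{equation*}
which holds because $\sigma\leq\lambda_\sigma\theta$ gives one inclusion, while $\lambda_\sigma\theta=\proj_\sigma\proj_\tau\lambda_\sigma\theta\in\conv(\sigma,\proj_\tau\lambda_\sigma\theta)$ by the definition of $\proj_\sigma$ gives the other. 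With that, the argument closes intrinsically and globally: given $\eta\in\defclpp(\sigma,\theta,\tau)\cap\partial\st\sigma$, the upper bound $\lambda_\sigma\eta=\sigma\cup\eta$ exists, and since $\defclpp(\sigma,\theta,\tau)$ is an intersection of roots---hence a full subcomplex containing both $\sigma$ and $\eta$---we get $\lambda_\sigma\eta\in\defclpp(\sigma,\theta,\tau)\cap\st\sigma$, which is therefore a face of the unique maximal simplex $\lambda_\sigma\theta$. Thus $\eta$ is a proper face of $\lambda_\sigma\theta$ outside $\st\sigma$, which is the desired reverse inclusion. No Coxeter-complex computation and no passage to an apartment are required, so there is nothing left to ``verify by enumerating roots.''

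This is precisely the paper's proof; you had all its pieces but stopped just short of assembling them. Your preliminary reductions are correct and consistent with the paper: $\partial\st\sigma$ is disjoint from $\st\sigma$ by definition and from $\st\tau$ because $\sigma\op\tau$ rules out a common upper bound via \ref{building:chamberdiameter}, so all three versions of $\defcl^*$ meet $\partial\st\sigma$ in the same set, and the easy inclusion is as you say. The only genuine gap is the unfinished final step, which the projection fact you already cited settles in one line.
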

\begin{proof}
    Any face of $\lambda_\sigma\theta$ that is not contained in $\st\sigma$ lies in $\defcl^*$. Conversely, let $\eta$ be a simplex of $\conv(\lambda_\sigma\theta,\proj_\tau\lambda_\sigma\theta)\cap\partial\st\sigma$. Since $\sigma$ is opposite to $\tau$, we have $\lambda_\sigma\theta=\proj_\sigma\proj_\tau\lambda_\sigma\theta$.
    Then $\conv(\lambda_\sigma\theta,\proj_\tau\lambda_\sigma\theta)$ equals $\conv(\sigma,\proj_\tau\lambda_\sigma\theta)$ and
    $\lambda_\sigma\eta$ is a face of $\lambda_\sigma\theta$, because $\proj_\sigma\proj_\tau\lambda_\sigma\theta$ is the unique maximal simplex of $\conv(\sigma,\proj_\tau\lambda_\sigma\theta)\cap\st\sigma$. Hence, $\eta$ is contained in $\partial\lambda_\sigma\theta\setminus\st\sigma$.
\end{proof}

\begin{notation}
    Let $\sigma\op\tau$ be opposite simplices of $\Delta$. For any non empty subcomplex $L$ of $\overline{\st\sigma}$ we put
    \begin{equation*}
        \defclpp(\sigma,L,\tau) = \bigcup\nolimits_{\theta\in\simp(L)}\defclpp(\sigma,\theta,\tau)
    \end{equation*}
    and analogously $\defclp(\sigma,L,\tau)$ as well as $\defcl(\sigma,L,\tau)$.
\end{notation}

\begin{lemma}\label{subc:cone-in-south}
    Let $\sigma$ be a simplex of $\Delta^=(x)$ and let $\tau$ be a simplex opposite to $\sigma$. Then $\defclpp(\sigma,L,\tau)$
    is a subcomplex of $\Delta^\geq(x)$ for any subcomplex $L$
    of $\partial\st_{\Delta^\geq(x)}\sigma$.
\end{lemma}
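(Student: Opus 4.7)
The subcomplex property is built into the definition $\defclpp(\sigma,L,\tau)=\bigcup_{\theta\in\simp(L)}\conv(\lambda_\sigma\theta,\proj_\tau\lambda_\sigma\theta)$, so the real content is the inclusion $\defclpp(\sigma,L,\tau)\subseteq\Delta^{\geq}(x)$. Since $\Delta^{\geq}(x)$ is full in $\Delta$ by Observation~\ref{hc:full-subcomplex}, it suffices to check that every vertex of every simplex of $\defclpp(\sigma,\theta,\tau)$ lies in $\Omega^{\geq}_\Delta(x)$, for each individual $\theta\in\simp(L)$. Fix such a $\theta$.

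My plan is to localize everything inside a single apartment. Choose $\Sigma$ containing both $\lambda_\sigma\theta$ and $\proj_\tau\lambda_\sigma\theta$; it automatically contains $\sigma,\tau,\theta$. When $\theta\not\subseteq\sigma$ (the degenerate case $\theta\subseteq\sigma$ collapses $\defclpp(\sigma,\theta,\tau)$ to $\conv(\sigma,\tau)$ and is dispatched at the end), the two generators are not opposite in $\Delta$: the link isomorphism $\lk_\Sigma\sigma\to\lk_\Sigma\tau$ induced by $\sigma\op\tau$ which governs the projection onto $\tau$ is not the apartment antipodal map, so any vertex in $\vt(\theta)\setminus\vt(\sigma)$ fails to have an antipodal partner among $\vt(\proj_\tau\lambda_\sigma\theta)$. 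Hence their Tits convex hull lies in every apartment containing both, which gives $\defclpp(\sigma,\theta,\tau)\subseteq\Sigma$. Pick a chamber $C$ of $\Sigma$ with $\lambda_\sigma\theta\leq C$, form the retraction $\rho=\rho_{\Sigma,C}$, and set $x'=\rho(x)$. Proposition~\ref{building:retraction} gives $d(x,v)\geq d_\Sigma(x',v)$ for every $v\in\Sigma$ with equality for $v\in\overline{C}$, so it suffices to show $d_\Sigma(x',v)\geq\pi/2$ for every vertex $v$ of $\defclpp(\sigma,\theta,\tau)$.

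The equality clause applied to $\sigma,\theta\subseteq\overline{C}$ yields $d_\Sigma(x',v)=\pi/2$ on $\vt(\sigma)$ and $d_\Sigma(x',v)\geq\pi/2$ on $\vt(\theta)$, placing $\lambda_\sigma\theta$ in $\Omega^{\geq}_\Sigma(x')$. Since $\sigma\op\tau$ in $\Sigma$, each vertex of $\tau$ is antipodal to a vertex of $\sigma$ and so lies at apartment distance $\pi-\pi/2=\pi/2$ from $x'$. For each remaining vertex $w$ of $\proj_\tau\lambda_\sigma\theta$, which is the image $\phi(v)$ of some vertex $v\in\vt(\theta)$ under the link isomorphism $\phi$ above, the apartment geodesic from $\sigma$ through $v$ to $\tau$ has length $\pi$ and ends at $w=\phi(v)$; with $d_\Sigma(x',\sigma)=d_\Sigma(x',\tau)=\pi/2$, the spherical law of cosines applied in the apartment triangles at $\sigma$ and at $\tau$ forces $d_\Sigma(x',w)=d_\Sigma(x',v)\geq\pi/2$, so $\proj_\tau\lambda_\sigma\theta\subseteq\Omega^{\geq}_\Sigma(x')$ too. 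The closed hemisphere $\Omega^{\geq}_\Sigma(x')$ is a closed metric ball of radius $\pi/2$ in the CAT(1) sphere $\Sigma$, hence $\pi$-convex; by the characterization of Tits convex subcomplexes recalled in Subsection~1.3 (via Theorem~2.19 of \cite{ti1}) the maximal subcomplex of $\Sigma$ supported in $\Omega^{\geq}_\Sigma(x')$ is Tits convex, and since it contains both generators it contains their Tits convex hull $\defclpp(\sigma,\theta,\tau)$, completing this case. The degenerate case $\theta\subseteq\sigma$ reduces to $\conv(\sigma,\tau)\subseteq\Delta^{=}(x)$, which follows from $\tau\in\Delta^{=}(x)$ (antipodes of the equator simplex $\sigma$) together with the $\pi$-convexity of the equator in any apartment. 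The main obstacle is the angle identification $d_\Sigma(x',\phi(v))=d_\Sigma(x',v)$ above, whose proof rests crucially on $\sigma\in\Delta^{=}(x)$ forcing $x'$ onto the axis equidistant from $\sigma$ and $\tau$.
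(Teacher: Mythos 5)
Your apartment-localization strategy (retract $x$ to $x'=\rho_{\Sigma,C}(x)$, bound vertex distances, then invoke convexity) differs genuinely from the paper's proof, which works directly with an arbitrary point $u$ of $\defclpp(\sigma,L,\tau)$ via the geodesic projection $p_yu\in\partial\lambda_\sigma\theta$ and the spherical law of cosines at $y$. But the key step in your version contains a real error. The length-$\pi$ geodesic from $y\in\sigma$ through $v$ ends at the antipode $z\in\tau$, not at $w=\phi(v)$, and the asserted equality $d_\Sigma(x',w)=d_\Sigma(x',v)$ is false: the link isomorphism $\phi$ induced by $\proj_\tau$ is not realized by the reflection of $\Sigma$ across the great subsphere equidistant from $\sigma$ and $\tau$, so putting $x'$ on that subsphere does not make it equidistant from $v$ and $\phi(v)$. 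A concrete counterexample already lives in a type $A_3$ apartment: take $\sigma$ the vertex of a ``point'', $v$ an incident vertex of a ``line'', so $\phi(v)$ is the residual ``point'' vertex; a direct computation with $x'$ a generic equator point of $\sigma$ gives $d_\Sigma(x',v)\neq d_\Sigma(x',\phi(v))$ even though both exceed $\pi/2$. What is true and sufficient is only the one-sided bound $d_\Sigma(x',w)\geq\pi/2$, and the clean route to it is exactly the paper's: $p_yw\in\partial\lambda_\sigma\theta\subseteq\overline{C}$ by Lemma~\ref{subc:cone-cap-star-boundary}, hence $d_\Sigma(x',p_yw)=d(x,p_yw)\geq\pi/2$, so the angle at $y$ is not acute and the law of cosines gives $d_\Sigma(x',w)\geq\pi/2$. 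Since the paper applies this to every point of $\defclpp(\sigma,L,\tau)$ at once, the apartment retraction, the vertex bookkeeping, and the convexity appeal add nothing here.

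Two further soft spots. In the degenerate case $\theta\subseteq\sigma$ you assert $\tau\in\Delta^=(x)$, but a simplex opposite to an equator simplex lies only in $\Delta^\geq(x)$ in general (the triangle inequality gives $d(x,\cdot)\geq\pi/2$, not equality), so that reduction does not go through as stated. And the claim that $\Sigma(\Omega^\geq_\Sigma(x'))$ is Tits convex for an arbitrary $x'$ is asserted without proof and is not among the facts recalled in Subsection~1.3; when the bounding great sphere of the hemisphere is not a wall, it is not obvious that the maximal supported subcomplex is an intersection of roots.
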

\begin{proof}
    Let $y\in\sigma$ be a point and let $z$ be its antipode in $\tau$. From the triangle inequality we get $d(z,x)\geq\pi/2$. Let $u$ be a point of $\defclpp(\sigma,L,\tau)\setminus\{y,z\}$. Then $u\in\defclpp(\sigma,\theta,\tau)$ for some simplex $\theta\in\simp(L)$. The projection $p_yu$ of $u$ on $\partial\st\sigma$ is a point of $\partial\lambda_\sigma\theta$ by \ref{subc:cone-cap-star-boundary}. It holds $d(p_yu,x)\geq\pi/2$, because $\lambda_\sigma\theta$ is a simplex of $\Delta^\geq(x)$. Hence, $\angle_y(x,u)$ is not acute and we obtain $d(x,u)\geq\pi/2$ by the spherical law of cosines.
\end{proof}

\begin{lemma}\label{subc:cone-in-filter}
    Let $\sigma\neq\emptyset$ be a simplex of $\im\restr_\Delta^x$ and let $\tau$ be opposite to $\sigma$. Let $L$ be a subcomplex of $\partial\st_{F_{\height(\sigma)}}\sigma$. If $\defclpp(\sigma,L,\tau)\cap\Delta^=(x)$ is contained in $\overline{\st\sigma}$, then $\defclpp(\sigma,L,\tau)$ is a subcomplex of $F_{\height(\sigma)}$ and $\defclp(\sigma,L,\tau)=\defclpp(\sigma,L,\tau)\cap F_{\height(\sigma)-1}$.
\end{lemma}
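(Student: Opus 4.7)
The plan is to prove both conclusions by establishing, for every simplex $\eta$ of $\defclpp(\sigma,L,\tau)$, the comparison
\begin{equation*}
    \restr_\Delta^x(\eta)\preceq\sigma\qquad(\star)
\end{equation*}
in the poset $(\im\restr_\Delta^x,\preceq)$ of Corollary \ref{filter:poset}, with equality iff $\sigma\subseteq\eta$. Writing $k=\height(\sigma)\ge 1$, both conclusions drop out quickly from $(\star)$ and this equivalence.

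For the setup, Lemma \ref{subc:cone-in-south} puts every simplex $\eta$ of $\defclpp(\sigma,L,\tau)$ in $\Delta^\geq(x)$, so $\restr_\Delta^x(\eta)$ is defined and is a face of $\eta_x^=$. The hypothesis $\defclpp(\sigma,L,\tau)\cap\Delta^=(x)\subseteq\overline{\st\sigma}$ places $\eta_x^=$ in $\overline{\st\sigma}$, so the upper bound $\alpha:=\sigma\cup\eta_x^=$ exists as a simplex of the equator. The key identity I aim at is $\restr_\Delta^x(\alpha)=\sigma$. Since $\sigma\in\im\restr_\Delta^x$ gives $\restr_\Delta^x(\sigma)=\sigma$ by Corollary \ref{filter:idempotent}, Lemma \ref{filter:equal-restriction} applied to $\alpha$ with face $\sigma$ reduces this to its condition (a), namely that $\eta_x^=\setminus\sigma$ is a simplex of the horizontal factor $(\lk\sigma)_\hor(p_\sigma x)$. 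All its vertices are at distance $\pi/2$ from $x$, so by Lemma \ref{hc:equator-link} they sit in $(\lk\sigma)^=(p_\sigma x)$, which by Proposition \ref{hc:join} factors as $(\lk\sigma)_\ver^=(p_\sigma x)*(\lk\sigma)_\hor(p_\sigma x)$, and the task narrows to ruling out vertices in the vertical factor.

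Granting $\restr_\Delta^x(\alpha)=\sigma$, Lemma \ref{filter:faceheight} applied to $\restr_\Delta^x(\eta)$ as a face of $\alpha$ together with idempotence yields $\restr_\Delta^x(\eta)\preceq\restr_\Delta^x(\alpha)=\sigma$, which is $(\star)$. The equivalence $\sigma\subseteq\eta\Leftrightarrow\restr_\Delta^x(\eta)=\sigma$ then follows from antisymmetry in the poset \ref{filter:poset}: if $\sigma\subseteq\eta$ then $\sigma=\restr_\Delta^x(\sigma)\preceq\restr_\Delta^x(\eta)$ by Lemma \ref{filter:faceheight}, so together with $(\star)$ we get $\restr_\Delta^x(\eta)=\sigma$, and conversely $\restr_\Delta^x(\eta)=\sigma\subseteq\eta$ is immediate. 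Both conclusions now follow. The inclusion $\defclpp(\sigma,L,\tau)\subseteq F_k$ is immediate from $\height(\eta)=\height(\restr_\Delta^x(\eta))\le\height(\sigma)=k$. For $\defclp(\sigma,L,\tau)=\defclpp(\sigma,L,\tau)\cap F_{k-1}$: a simplex $\eta\in\defclp$ does not contain $\sigma$, so $\restr_\Delta^x(\eta)\ne\sigma$ by the equivalence, hence $\restr_\Delta^x(\eta)\prec\sigma$ and $\height(\eta)<k$; conversely any $\eta\in\defclpp\cap F_{k-1}$ cannot contain $\sigma$, since otherwise the equivalence would give $\restr_\Delta^x(\eta)=\sigma$ and $\height(\eta)=k$.

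The main obstacle is the horizontality step, i.e.\ showing $\eta_x^=\setminus\sigma\subseteq(\lk\sigma)_\hor(p_\sigma x)$. This is the unique place where both hypotheses must interact: the combinatorial $\sigma\in\im\restr_\Delta^x$ (every vertex of $\sigma$ is essential) and the geometric $\defclpp(\sigma,L,\tau)\cap\Delta^=(x)\subseteq\overline{\st\sigma}$ (no equator simplex of $\defclpp$ escapes the closed star). If some vertex $w\in\eta_x^=\setminus\sigma$ were vertical in $\lk\sigma$ with respect to $p_\sigma x$, then Lemma \ref{filter:join} would supply a chamber $C\supseteq\sigma$ containing $w$ together with a vertex $w'\notin\Delta^=(x)$ adjacent to $C$ across $C\setminus w$; I expect that working inside an apartment containing $\sigma$, $\tau$, and $C$, and exploiting the symmetric projection structure of $\defclpp(\sigma,L,\tau)$ through the opposite pair $\sigma\op\tau$, one can rebuild an equator simplex inside $\defclpp(\sigma,L,\tau)$ that is not a face of any chamber of $\st\sigma$, contradicting the hypothesis.
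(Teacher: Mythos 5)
Your overall strategy is the same as the paper's — compare heights through the restriction map and apply \ref{filter:faceheight} and \ref{filter:equal-restriction} — but as written the proof is incomplete: the step you yourself flag as ``the main obstacle,'' namely that $\eta_x^=\setminus\sigma$ lies in $(\lk\sigma)_\hor(p_\sigma x)$, is never proved. What you offer in its place (invoking \ref{filter:join} to produce a chamber $C$ and a vertex $w'$, then ``rebuilding an equator simplex'' by exploiting the pair $\sigma\op\tau$) is only a plan, not an argument, and you say as much. Since this is the unique place the geometric hypothesis enters, the gap is genuine.

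The gap is fillable, but the horizontality claim is also the wrong lever: you do not need it, and aiming for it is what makes the step look hard. The hypothesis $\defclpp(\sigma,L,\tau)\cap\Delta^=(x)\subseteq\overline{\st\sigma}$ should be fed directly into Lemma \ref{subc:cone-cap-star-boundary}: for $\eta\in\simp(\defclp(\sigma,L,\tau))$ with $\eta_x^=\neq\emptyset$, the face $\eta_x^=$ lies in $\partial\lambda_\sigma\theta\setminus\st\sigma$ for some $\theta\in\simp(L)$. In particular $\eta_x^=$ is a face of $\lambda_\sigma\theta\in\st_{F_{\height(\sigma)}}\sigma$ (so $\restr_\Delta^x(\lambda_\sigma\theta)=\sigma$) which does not contain $\sigma$. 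Lemma \ref{filter:equal-restriction}, applied to the pair $\eta_x^=\leq\lambda_\sigma\theta$, then yields $\restr_\Delta^x(\eta_x^=)\neq\sigma$ directly, and Lemma \ref{filter:faceheight} gives $\height(\eta)=\height(\eta_x^=)<\height(\lambda_\sigma\theta)=\height(\sigma)$. This closes the argument for $\defclp(\sigma,L,\tau)\subseteq F_{\height(\sigma)-1}$ in one line; the simplices of $\defclpp(\sigma,L,\tau)$ containing $\sigma$ land in $\st_{F_{\height(\sigma)}}\sigma$ by the same face-monotonicity of $\restr_\Delta^x$, and the two conclusions follow. Your detour through $\alpha=\sigma\cup\eta_x^=$ and condition (a) of \ref{filter:equal-restriction} is sound in principle (the horizontality of $\eta_x^=\setminus\sigma$ is in fact true, for the very reason just given: it is a face of $\theta_x^=$ with $\restr_\Delta^x(\lambda_\sigma\theta)=\sigma$), but passing through condition (a) rather than going straight to (b)$\Leftrightarrow$(c) is the long way around, and it is exactly the part you did not complete.
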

\begin{proof}
    Let $\theta$ be a simplex of $\defclp(\sigma,L,\tau)$. Then $\theta\in\simp(\Delta^\geq(x))$ by \ref{subc:cone-in-south}. Either $\theta$ is a simplex of $\Delta^>(x)$, which means $\height(\theta)=0<\height(\sigma)$, or we have $\theta_x^=\neq\emptyset$ (see \ref{filter:def-restriction}). In the latter case $\theta_x^=$ is contained in $\partial\lambda_\sigma\eta\setminus\st\sigma$ for some simplex $\eta\in\simp(L)$ by \ref{subc:cone-cap-star-boundary}. Since $\sigma$ is not a face of $\theta_x^=$, we get $\restr_\Delta^x(\theta_x^=)\neq\restr_\Delta^x(\lambda_\sigma\eta)=\sigma$ from \ref{filter:equal-restriction}. Then $\height(\theta)=\height(\theta_x^=)<\height(\lambda_\sigma\eta)=\height(\sigma)$ according to \ref{filter:faceheight}. Therefore $\defclp(\sigma,L,\tau)$ is a subcomplex of $F_{\height(\sigma)-1}$. The claim follows, since any simplex of $\defclpp(\sigma,L,\tau)$ is either a simplex of $\defclp(\sigma,L,\tau)$ or lies in $\st_{F_{\height(\sigma)}}\sigma$ and $F_{\height(\sigma)-1}\subseteq F_{\height(\sigma)}\setminus \st_{F_{\height(\sigma)}}\sigma$ by \ref{filter:def-filter}.
\end{proof}

\noindent Let $\sigma\neq\emptyset$ be a simplex of $\im\restr_\Delta^x$. In an ideal situation, there would be an opposite $\tau$ of $\sigma$ such that $\defclpp(\sigma,\partial\st_{F_{\height(\sigma)}}\sigma,\tau)\cap\Delta^=(x)\subseteq\overline{\st\sigma}$. But if $\lk\sigma$ has a join factor that is contained in the equator complex, such an opposite does not exist in general. We will show that $\defclpp(\sigma,L,\tau)\cap\Delta^=(x)\subseteq\overline{\st\sigma}$, for some opposite $\tau$ of $\sigma$, provided $\Delta$ is thick and $L$ is a subcomplex of $\partial\st_{F_{\height(\sigma)}}\sigma$ such that $L\cap\Delta^=(x)\subseteq\Sigma$ for some apartment $\Sigma$ containing $x$ and $\sigma$. Let us begin by a sketch of the idea:

Let $y$ be a point of $\sigma$ and let $\Sigma$ be an apartment containing $x$ and $y$. Let $u$ be a point of the intersection $\Omega_\Sigma^=(x)\cap\partial\st_\Sigma\sigma$ and let $\Sigma'$ be some other apartment that contains $y$, $u$, and $p_yx$. Further, let $\tau$ be the opposite of $\sigma$ in $\Sigma'$ and denote the antipode of $y$ in $\tau$ by $z$. We denote the simplex carrying $u$ by $\theta$. Finally, we denote the simplex carrying $x$ by $\xi$ and the simplex carrying $p_yx$ by $\chi$.
\begin{figure}[ht]
    \begin{center}
        \includegraphics[scale=0.8]{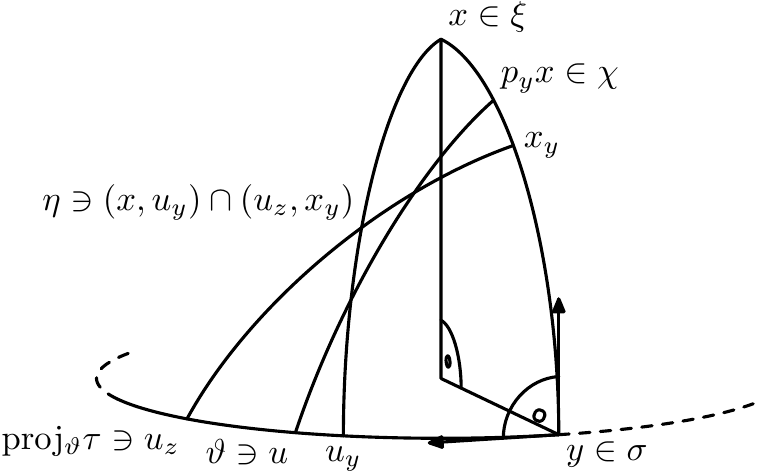}
    \end{center}
    \caption{$d(u_z,x)=\pi/2$ implies $\proj_{\theta\cup\chi}\xi=\eta=\proj_{\theta\cup\chi}\proj_\theta\tau$.}
\end{figure}

Suppose there is a point $u_z\in[z,u)\cap\st\theta$ at distance $\pi/2$ to $x$. By the spherical law of cosines the triangle $(u_z,y,x)$ is contained in some apartment. Then for any $u_y\in(u,y)$ and any $x_y\in(y,p_yx)$ the segments $(x,u_y)$, $(u_z,x_y)$, and $(u,p_yx)$ intersect each other. We are able to choose $u_y$ and $x_y$ such that $(x,u_y)\cap(u_z,x_y)$ is near $(u,p_yx)$ but outside the triangle $(u,y,p_yx)$. Then the simplex $\eta$ carrying $(x,u_y)\cap(u_z,x_y)$ is not contained in $\overline{\st\sigma}$. Since $\proj_\theta\tau$ is the simplex carrying $u_z$, we obtain $\proj_{\theta\cup\chi}\xi=\eta=\proj_{\theta\cup\chi}\proj_\theta\tau=\proj_{\theta\cup\chi}\tau$
by \ref{building:projection}. Therefore $\Sigma\cap\Sigma'$ is not contained in $\overline{\st\sigma}$.

Note, that $d(u_z,y)\neq\pi/2$ for any opposite $\tau$ of $\sigma$ chosen in an apartment $\Sigma'$ satisfying $\Sigma\cap\Sigma'=\overline{\st_\Sigma\sigma}$. Provided $\Delta$ is thick, such an apartment exists by the following lemma, which follows directly from \cite[Proposition~4.1]{abvm2}.

\begin{lemma}\label{subc:apartment-intersection}
    Let $\Delta$ be a thick and let $\Sigma\subseteq\Delta$ be an apartment. If $K\subseteq\Sigma$ is a non empty, convex chamber subcomplex, then there is an apartment $\Sigma'$ such that $K=\Sigma\cap\Sigma'$.
\end{lemma}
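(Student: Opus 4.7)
The statement is a well-known structural fact about thick buildings, and the plan is essentially to quote Abramenko--Van Maldeghem \cite{abvm2}: their Proposition~4.1 establishes, for arbitrary (not merely spherical) thick buildings, that every convex chamber subcomplex of an apartment arises as the intersection of that apartment with a suitably chosen second apartment. Since the hypotheses of our lemma match those of their proposition verbatim once we remark that $K$ is a \emph{chamber} subcomplex (so that its convexity in the sense of Tits is equivalent to $K$ being an intersection of roots of $\Sigma$), nothing beyond invoking the reference is needed.

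If one wanted to give a self-contained argument in the spherical setting, the natural route is induction on the number of chambers in $\Sigma\setminus K$. The base case $K=\Sigma$ is trivial: take $\Sigma'=\Sigma$. For the inductive step, because $K$ is a proper, convex, chamber subcomplex, there is a wall $H$ of $\Sigma$ such that one of its bounding roots $\alpha$ contains $K$ while the opposite root $-\alpha$ contains at least one chamber $C\notin K$ adjacent (across a panel $P\subseteq H$) to a chamber $D\in K$. Thickness provides a third chamber $C'\supseteq P$ distinct from $C$ and $D$, and one then constructs a new apartment $\Sigma^{*}$ agreeing with $\Sigma$ on the half $\alpha\supseteq K$ but replacing $C$ by $C'$ across $P$. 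This uses the standard folding/exchange argument (e.g.\ building the apartment as the $W$-translate of a chamber system containing $C'$, or equivalently using the retraction $\rho_{\Sigma,D}$ in reverse together with $\rho_{\Sigma',D}$ for a provisional $\Sigma'\supseteq\{D,C'\}$). Then $\Sigma\cap\Sigma^{*}\supseteq K\cup\{D\}$ but $C\notin\Sigma^{*}$, so the number of chambers of $\Sigma^{*}$ lying outside $K$ is strictly smaller than that of $\Sigma$; the induction hypothesis, applied to $\Sigma^{*}$ in place of $\Sigma$, delivers the desired $\Sigma'$.

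The main obstacle in this self-contained route is verifying that $\Sigma^{*}$ is genuinely an apartment and that the intersection $\Sigma\cap\Sigma^{*}$ has exactly the claimed form; this is precisely the delicate point that \cite[Proposition~4.1]{abvm2} treats cleanly via their analysis of isomorphisms between apartments pinned down by their common chamber systems. Accordingly, the most efficient write-up is simply to refer to that proposition, which is what we shall do.
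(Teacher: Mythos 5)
Your proposal is correct and follows the paper exactly: the paper supplies no proof of this lemma either, stating only that it ``follows directly from \cite[Proposition~4.1]{abvm2}.'' Your additional sketch of a thickness-based inductive argument is a reasonable account of what lies behind that reference, but since you ultimately defer to the citation, the approaches coincide.
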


\begin{lemma}\label{subc:proj-condition}
    Let $\sigma\neq\emptyset$ be a simplex of $\Delta^=(x)$ and let $\tau$ be opposite to $\sigma$. Let $L$ be a subcomplex of $\partial\st_{\Delta^\geq(x)}\sigma$. If  $\defclpp(\sigma,L,\tau)\cap\Delta^=(x)$ is not contained in $\overline{\st\sigma}$, then $\proj_\theta\tau\in\simp(\Delta^=(x))$ for some simplex $\theta$ of $\Delta^=(x)\cap(\overline{\lambda_\sigma L}\setminus\st\sigma)$.
\end{lemma}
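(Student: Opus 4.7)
The plan is to extract a witness point from the failure of inclusion and then trace a carefully chosen geodesic on the equator $\Omega^=_\Delta(x)$ back to a simplex on $\partial\st\sigma$ with the required properties.

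Fix $u\in\defclpp(\sigma,L,\tau)\cap\Delta^=(x)$ with $u\notin\overline{\st\sigma}$, and a simplex $\theta_0\in\simp(L)$ with $u\in\defclpp(\sigma,\theta_0,\tau)$. Recall from the proof of \ref{subc:cone-cap-star-boundary} that $\defclpp(\sigma,\theta_0,\tau)=\conv(\sigma,\proj_\tau\lambda_\sigma\theta_0)$ lies inside an apartment $\Sigma'$ hosting the opposite simplex pair $\mu:=\lambda_\sigma\theta_0$ and $\mu':=\proj_\tau\mu$. Choose $y$ in the (open) simplex $\sigma$ so that $u$ lies on a geodesic $[y,w]$ with $w\in\mu'$; extending in $\Sigma'$ yields the length-$\pi$ arc $g=[y,-y]$ through $u$, where $-y\in\tau$ is the antipode of $y$. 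Applying the argument of \ref{subc:cone-in-south} to $u$ gives $\angle_y(x,u)\geq\pi/2$; combined with $d(x,y)=d(x,u)=\pi/2$ and the spherical law of cosines, equality is forced and supplies $\angle_y(x,u)=\pi/2$ together with an apartment $\Sigma_0$ containing $x$, $y$, $u$. In the sphere $\Sigma_0$ the geodesic $[y,u]$ starts at $y$ tangent to the great subsphere $\Omega^=_{\Sigma_0}(x)$, hence lies inside it, and so does its great-circle extension in $\Sigma_0$; since $-y$ is also the antipode of $y$ in $\Sigma_0$, the full arc $g$ (and with it $\tau$) sits in $\Sigma_0\cap\Omega^=_\Delta(x)$.

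Let $v$ be the first point of $[y,u]$ outside $\overline{\st\sigma}$ and let $\theta$ be the simplex carrying $v$. Since $v\in\partial\st\sigma$ lies in the convex subcomplex $\defclpp(\sigma,\theta_0,\tau)$, \ref{subc:cone-cap-star-boundary} places $\theta$ in $\overline{\lambda_\sigma L}\setminus\st\sigma$. The point $v$ is an interior point of the open simplex $\theta$ lying on $\Omega^=_\Delta(x)$, while $\theta\subseteq\Omega^\geq_\Delta(x)$ by \ref{subc:cone-in-south}; the fact that an open spherical simplex contained in a closed hemisphere and meeting the bounding equator in an interior point must lie entirely in that equator then gives $\theta\subseteq\Omega^=_\Delta(x)$, i.e.\ $\theta\in\simp(\Delta^=(x))$. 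For the projection statement, \ref{building:projection} applied to $v\in\theta$ and $-y\in\tau$ shows that the continuation of $g$ just past $v$ enters $\proj_\theta\tau$ along the segment $(v,p_v(-y))\subseteq g\subseteq\Omega^=_\Delta(x)$; since $\proj_\theta\tau$ itself lies in $\defclpp(\sigma,L,\tau)\subseteq\Omega^\geq_\Delta(x)$ by \ref{subc:cone-in-south}, the same principle yields $\proj_\theta\tau\subseteq\Omega^=_\Delta(x)$, so $\proj_\theta\tau\in\simp(\Delta^=(x))$, as required.

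The main obstacle is the apartment juggling in the second half of the first paragraph: the weak inequality provided by \ref{subc:cone-in-south} has to be sharpened into an equality via the spherical law of cosines in order to produce the apartment $\Sigma_0$ and push the whole geodesic $g$ onto $\Omega^=_\Delta(x)$. Once this is in place, the ``open spherical simplex inside a closed hemisphere meeting the equator in an interior point lies in the equator'' argument handles both $\theta$ and $\proj_\theta\tau$ uniformly.
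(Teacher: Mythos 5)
Your proof follows essentially the same route as the paper's (produce the exit point $v=p_yu$ of the segment $[y,u]$ from $\overline{\st\sigma}$, show the simplex $\theta$ carrying it lies in the equator, then use \ref{building:projection} to see $\proj_\theta\tau$ carries an equatorial point), but two steps in the middle are problematic.

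First, the justification of $[y,u]\subseteq\Omega^=_\Delta(x)$ via the apartment $\Sigma_0$ is fine, but you then overreach by claiming ``the full arc $g$ (and with it $\tau$) sits in $\Sigma_0\cap\Omega^=_\Delta(x)$, since $-y$ is also the antipode of $y$ in $\Sigma_0$.'' There is no reason for $-y\in\tau$ to lie in $\Sigma_0$: the apartment $\Sigma_0$ comes from the equality case of the spherical law of cosines applied to the triangle $(x,y,u)$, so it contains $x,y,u$ but need not contain $\tau$. The extension of $[y,u]$ past $u$ inside $\Sigma_0$ generally diverges from the extension inside $\Sigma'$. In fact $[u,-y]\not\subseteq\Omega^\leq_\Delta(x)$ in general (you only know $d(x,-y)\ge\pi/2$), so $g\subseteq\Omega^=_\Delta(x)$ is likely false. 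Your last paragraph leans on this false inclusion to place $(v,p_v(-y))$ in the equator. The repair is exactly what the paper does: you do not need all of $(v,p_v(-y))$ to be equatorial, only that a nondegenerate initial piece of $[v,-y]$ beyond $v$ is; and that piece can be taken inside $(v,u]\subseteq[y,u]\subseteq\Omega^=_\Delta(x)$, which already gives that $\proj_\theta\tau$ carries a point of $\Omega^=_\Delta(x)$ (then finish by \ref{subc:cone-in-south}). Incidentally, the inclusion $[y,u]\subseteq\Omega^=_\Delta(x)$ itself is available without the $\Sigma_0$ detour: $\Omega^\leq_\Delta(x)$ is convex (so it contains the segment between $y,u\in\Omega^=_\Delta(x)$), and $\defclpp(\sigma,\theta_0,\tau)\subseteq\Omega^\geq_\Delta(x)$ by \ref{subc:cone-in-south}, forcing $[y,u]$ into the equator.

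Second, you never handle the degenerate case $u=-y$ (i.e.\ $u=z\in\tau$). In that case $d(y,u)=\pi$, the angle $\angle_y(x,u)$ is undefined, and your argument does not start. The paper disposes of this case separately by taking $\theta=\emptyset$ and noting $\proj_\emptyset\tau=\tau\in\simp(\Delta^=(x))$ by \ref{subc:cone-in-south}. You should do the same, or explain why a choice of $y$ avoiding $u=-y$ is always possible and still places $u$ on a suitable segment.

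Minor: ``the first point of $[y,u]$ outside $\overline{\st\sigma}$'' is not literally well-defined since $\overline{\st\sigma}$ is closed; you mean the exit point $p_yu\in\partial\st\sigma$, which is what the paper uses.
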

\begin{proof}
    Recall that we are dealing with open simplices. Therefore, a simplex contained in a closed hemisphere lies entirely in the associated equator, if it carries a point of the equator.

    Let $u$ be a point of $\Delta^=(x)\cap\defclpp(\sigma,L,\tau)$ that is not contained in $\overline{\st\sigma}$. Furthermore let $y\in\sigma$ and $z\in\tau$ be antipodal points. If $u=z$, then $\tau=\proj_\emptyset\tau$ is a simplex of $\Delta^=(x)$ by \ref{subc:cone-in-south}. Hence, suppose $u\neq z$.

    The segment $[y,u]$ lies in $\Omega^\leq_\Delta(x)$ as well as in $\defclpp(\sigma,L,\tau)\subseteq\Omega^\geq_\Delta(x)$. Therefore $[y,u]$ is entirely contained in $\Omega^=_\Delta(x)$. By \ref{subc:cone-cap-star-boundary} the simplex $\theta$ carrying $p_yu$ is in $\overline{\lambda_\sigma L}\setminus\st\sigma$ and also a simplex of  $\Delta^=(x)$ by \ref{subc:cone-in-south}. Note, that $p_yu$ lies on the geodesic segment joining $y$, $z$ going through $u$. We obtain $[p_yu,u]\subset[p_yu,z]$, implying by \ref{building:projection}, that $\proj_\theta\tau$ carries a point of $\Omega^=_\Delta(x)$. Then $\proj_\theta\tau$ is a simplex of $\Delta^=(x)$ according to \ref{subc:cone-in-south}.
\end{proof}

\begin{lemma}\label{subc:intersection-condition}
    Let $\sigma\neq\emptyset$ be a simplex of $\im\restr_\Delta^x$ and let $\Sigma$, $\Sigma'$ be apartments such that $x\in\Sigma$ and $\Sigma\cap\Sigma'$ contains $\st_\Sigma\sigma$. Let $\tau$ be the opposite of $\sigma$ in $\Sigma'$. If $\proj_\theta\tau\in\simp(\Delta^=(x))$ for some simplex $\theta$ of $\Sigma^=(x)\cap\partial\st_{F_{\height(\sigma)}}\sigma$, then $\Sigma\cap\Sigma'$ is strictly greater than $\overline{\st_\Sigma\sigma}$.
\end{lemma}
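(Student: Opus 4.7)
The strategy is to exhibit a simplex $\eta$ in $(\Sigma \cap \Sigma') \setminus \overline{\st_\Sigma \sigma}$, following the geometric sketch preceding Lemma~\ref{subc:apartment-intersection}. Concretely, $\eta$ will arise as a common iterated projection $\eta = \proj_{\theta \cup \chi} \xi = \proj_{\theta \cup \chi} \tau$, where $\xi$ is the simplex carrying $x$ and $\chi$ is the face of $\proj_\sigma \xi$ complementary to $\sigma$, so that $\sigma \cup \chi = \proj_\sigma \xi$ and $\chi$ carries $p_\sigma x$ by Lemma~\ref{hc:equator-link}.

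The preparatory step is to move the simplices $\theta$, $\chi$, and $\theta \cup \chi$ into $\Sigma \cap \Sigma'$. Since $\theta \in \partial \st_{F_{\height(\sigma)}}\sigma$, the simplex $\sigma \cup \theta$ exists, has all vertices in $\Sigma$, and lies in $\st_\Sigma \sigma \subseteq \Sigma \cap \Sigma'$; thus $\theta \in \Sigma \cap \Sigma'$. Similarly $\proj_\sigma \xi = \sigma \cup \chi$ is a simplex of $\Sigma$ containing $\sigma$ and hence in $\Sigma \cap \Sigma'$, so $\chi \in \Sigma \cap \Sigma'$. Finally I would argue that $\theta \cup \chi$ itself is a simplex sitting inside some chamber of $\st_\Sigma \sigma$, whence also $\theta \cup \chi \in \Sigma \cap \Sigma'$.

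The geometric heart of the argument begins by choosing $y \in \sigma$ with antipode $z \in \tau$ and a point $u \in \theta$. Lemma~\ref{building:projection} gives $(u, p_u z) \subseteq \proj_\theta \tau$, and the hypothesis $\proj_\theta \tau \in \Delta^=(x)$ forces $d(u_z, x) = \pi/2$ for every $u_z$ in this open segment. Combined with $d(x, y) = \pi/2$ and $d(y, u_z) < \pi$, Proposition~\ref{building:segments} places the triangle $(x, y, u_z)$ in an apartment. Working there, I would select interior points $u_y \in (u, y)$ and $x_y \in (y, p_y x)$ so that the segments $(x, u_y)$ and $(u_z, x_y)$ cross at a point $w$ situated just outside the triangle $(u, y, p_y x)$. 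Two applications of Lemma~\ref{building:projection} then identify the open simplex $\eta$ carrying $w$ both as $\proj_{\theta \cup \chi} \xi$ (via the segment from $x$) and as $\proj_{\theta \cup \chi} \proj_\theta \tau = \proj_{\theta \cup \chi} \tau$ (via the segment from $u_z$, together with \cite[2.30.5]{ti1}). This gives $\eta \in \Sigma \cap \Sigma'$; the placement of $w$ strictly outside the triangle $(u, y, p_y x)$ prevents $\sigma \cup \eta$ from being a simplex, so $\eta \notin \overline{\st_\Sigma \sigma}$.

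The main obstacle is this last geometric step: controlling $u_y$ and $x_y$ so that $w$ lands in an open simplex $\eta$ strictly outside $\overline{\st_\Sigma \sigma}$ while both projection identities hold. This requires careful small-parameter work in the apartment supplied by \ref{building:segments} together with the iterated application of \ref{building:projection}, and it is exactly the condition $d(u_z, x) = \pi/2$ (not $< \pi/2$) that forces $\proj_{\theta \cup \chi} \xi$ and $\proj_{\theta \cup \chi} \tau$ to coincide rather than to land in different simplices.
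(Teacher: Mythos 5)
Your proposal follows the heuristic geometric sketch that precedes Lemma~\ref{subc:apartment-intersection}, but this misses the paper's essential first move: the actual proof reduces to the case $\theta=\emptyset$ before doing any geometry. Using \ref{filter:def-filter} and \ref{filter:link}, one sees that $\sigma\setminus\theta$ is a non-empty simplex of $\im\restr_{\lk\theta}^{p_\theta x}$; by \ref{building:projection} the simplex $(\proj_\theta\tau)\setminus\theta$ is opposite to $\sigma\setminus\theta$ in $\lk_{\Sigma'}\theta$, and by \ref{hc:link} it lies in the equator complex of $\lk\theta$ with respect to $p_\theta x$ exactly when $\proj_\theta\tau\in\simp(\Delta^=(x))$. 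Since $\lk_\Sigma\theta\cap\lk_{\Sigma'}\theta$ being strictly larger than the closed star of $\sigma\setminus\theta$ in $\lk_\Sigma\theta$ forces $\Sigma\cap\Sigma'$ to be strictly larger than $\overline{\st_\Sigma\sigma}$, one may assume from the start that $\theta=\emptyset$ and hence $\tau\in\simp(\Delta^=(x))$.

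In that reduced case the geometry collapses to something far simpler than the sketch: with $y\in\sigma$, $z\in\tau$ antipodal, one has $d(x,y)=d(x,z)=\pi/2$ so $d(y,x)+d(x,z)=\pi$, giving a geodesic $s$ through $y$, $x$, $z$. Since $\restr_\Delta^x(\sigma)=\sigma\neq\emptyset$, the pole $x$ is not in $\overline{\st\sigma}$ (\ref{building:join}, \ref{filter:empty}), so $x$ is an interior point of $[z,p_yx]$, and \ref{building:projection} directly yields that $\proj_\chi\xi=\proj_\chi\tau$ (with $\chi$ carrying $p_yx$, $\xi$ carrying $x$) is a simplex of $\Sigma\cap\Sigma'$ not contained in $\overline{\st\sigma}$. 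No auxiliary point $u_z$, no crossing-segment construction, no small-parameter tuning. Your version, by contrast, must (a) assert without verification that $\theta\cup\chi$ is a simplex, which is not automatic for distinct simplices $\theta,\chi$ of $\lk\sigma$; (b) check that $\angle_y(x,u_z)=\pi/2$ and invoke the equality case of the spherical law of cosines \ref{building:sph-cos-law}, not \ref{building:segments}, to place $(x,y,u_z)$ in an apartment; and (c) carry out the perturbation of $u_y,x_y$ that you explicitly flag as the ``main obstacle.'' That obstacle is exactly what the $\theta=\emptyset$ reduction eliminates, and without it your argument is a sketch of a sketch rather than a proof.
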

\begin{proof}
    Let $\theta$ be a simplex of $\Sigma^=(x)\cap\partial\st_{F(\height(\sigma))}\sigma$ and $\proj_\theta\tau\in\simp(\Delta^=(x))$. At first we show that if suffices to consider the case $\theta=\emptyset$ and $\tau\in\simp(\Delta^=(x))$.

    We have $\restr_\Delta^x(\sigma\cup\theta)=\sigma$ by \ref{filter:def-filter}, hence $\sigma\setminus\theta$ is a non empty simplex of  $\im\restr_{\lk\theta}^{p_\theta x}$ according to \ref{filter:link}. The star of $\sigma\setminus\theta$ in $\lk_\Sigma\theta$ is contained in the apartment $\lk_{\Sigma'}\theta$ and $p_\theta x$ lies in $\lk_\Sigma\theta$. From \ref{building:projection} we know that $(\proj_\theta\tau)\setminus\theta$ is opposite to $\sigma\setminus\theta$ in $\lk_{\Sigma'}\theta$ and \ref{hc:link} implies that $(\proj_\theta\tau)\setminus\theta$ is in the equator complex of $\lk\theta$ with respect to $p_\theta x$ if and only if $\proj_\theta\tau$ is a simplex of $\Delta^=(x)$.  If $\lk_\Sigma\theta\cap\lk_{\Sigma'}\theta$ is strictly greater than the closed star of $\sigma\setminus\theta$ in $\lk_\Sigma\theta$, then $\Sigma\cap\Sigma'$ is also strictly greater than $\overline{\st_\Sigma\sigma}$. Hence, suppose $\theta=\emptyset$ and $\tau\in\simp(\Delta^=(x))$.

    Let $y\in\sigma$ and $z\in\tau$ be antipodal points. Denote the simplex carrying $x$ by $\xi$ and the simplex carrying $p_yx$ by $\chi$. Since $d(y,x)+d(x,z)=\pi$, there is a geodesic segment $s$ joining $y$ and $z$ going through $x$. It holds $s\setminus\overline{\st\sigma}=[z,p_yx)$. Since $\restr_\Delta^x(\sigma)=\sigma\neq \emptyset$, the pole $x$ can not be a point of $\overline{\st\sigma}$ according to \ref{building:join} and \ref{filter:empty}. Then $x$ is an interior point of $[z,p_yx]$. By \ref{building:projection}, it follows that $\proj_\chi\xi$ is not contained in $\overline{\st\sigma}$ and also that $\proj_\chi\xi=\proj_\chi\tau$ is a simplex of $\Sigma\cap\Sigma'$.
\end{proof}

\begin{cor}\label{subc:apartment-condition}
    Let $\Delta$ be thick and let $\sigma\neq\emptyset$ be simplex of $\im\restr_\Delta^x$. If $L$ is a subcomplex of $\lk_{F_{\height(\sigma)}}\sigma$ such that $L\cap\Delta^=(x)\subseteq\Sigma$ for some apartment $\Sigma$ containing $x$ and $\sigma$, then there is an opposite $\tau$ of $\sigma$ such that $\defclpp(\sigma,L,\tau)$ is a subcomplex of $F_{\height(\sigma)}$ and $\defclp(\sigma,L,\tau)=\defclpp(\sigma,L,\tau)\cap F_{\height(\sigma)-1}$.
\end{cor}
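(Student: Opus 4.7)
The plan is to choose $\tau$ so that every equator simplex appearing in $\defclpp(\sigma,L,\tau)$ is pinned inside $\overline{\st\sigma}$, so that Lemma~\ref{subc:cone-in-filter} applies directly. The closed star $\overline{\st_\Sigma\sigma}$ is a non empty, convex chamber subcomplex of $\Sigma$ (it equals the intersection of those roots of $\Sigma$ whose walls avoid $\sigma$), so thickness together with Lemma~\ref{subc:apartment-intersection} produces an apartment $\Sigma'$ with $\Sigma\cap\Sigma'=\overline{\st_\Sigma\sigma}$. Let $\tau$ denote the opposite of $\sigma$ inside $\Sigma'$.

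The main claim is that $\defclpp(\sigma,L,\tau)\cap\Delta^=(x)\subseteq\overline{\st\sigma}$, which I would establish by contradiction. If it fails, Lemma~\ref{subc:proj-condition} yields a simplex $\theta\in\Delta^=(x)\cap(\overline{\lambda_\sigma L}\setminus\st\sigma)$ with $\proj_\theta\tau\in\simp(\Delta^=(x))$. By the shape of $\overline{\lambda_\sigma L}\setminus\st\sigma$ there is some $\eta\in\simp(L)$ such that $\theta$ is a face of $\sigma\cup\eta$ that does not contain $\sigma$; since $L\subseteq\lk_{F_{\height(\sigma)}}\sigma$ makes $\sigma\cup\eta$ a simplex of $F_{\height(\sigma)}$, one already sees $\theta\in\partial\st_{F_{\height(\sigma)}}\sigma$. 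The delicate point, and the main obstacle of the proof, is to also place $\theta$ in the specific apartment $\Sigma$; once this is done, Lemma~\ref{subc:intersection-condition} forces $\Sigma\cap\Sigma'\supsetneq\overline{\st_\Sigma\sigma}$ and contradicts the choice of $\Sigma'$.

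For the placement of $\theta$ in $\Sigma$, observe that every vertex of $\theta$ belongs to $\Omega_\Delta^=(x)$ and lies in the vertex set of $\sigma\cup\eta$. Since $\sigma$ is itself an equator simplex, the only equator vertices available come from $\sigma$, which is in $\Sigma$, and from the face $\eta_x^=$ of $\eta$. Because $L$ is a subcomplex, $\eta_x^=\in L$, and the hypothesis $L\cap\Delta^=(x)\subseteq\Sigma$ puts $\eta_x^=$ into $\Sigma$. All vertices of $\theta$ are therefore vertices of $\Sigma$, and since apartments of $\Delta$ are full subcomplexes, $\theta$ itself lies in $\Sigma$, as required. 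Lemma~\ref{subc:cone-in-filter} now applies to $\sigma,L,\tau$ and delivers the two conclusions $\defclpp(\sigma,L,\tau)\subseteq F_{\height(\sigma)}$ and $\defclp(\sigma,L,\tau)=\defclpp(\sigma,L,\tau)\cap F_{\height(\sigma)-1}$.
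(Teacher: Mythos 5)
Your proposal is correct and follows essentially the same path as the paper: obtain $\Sigma'$ with $\Sigma\cap\Sigma'=\overline{\st_\Sigma\sigma}$ via Lemma~\ref{subc:apartment-intersection}, take $\tau$ opposite $\sigma$ in $\Sigma'$, use Lemma~\ref{subc:proj-condition} and Lemma~\ref{subc:intersection-condition} to force $\defclpp(\sigma,L,\tau)\cap\Delta^=(x)\subseteq\overline{\st\sigma}$, then apply Lemma~\ref{subc:cone-in-filter}. The one place where you add something beyond the paper's text is the verification that any offending $\theta$ lies in $\Sigma$; the paper simply asserts $\Delta^=(x)\cap(\overline{\lambda_\sigma L}\setminus\st\sigma)\subseteq\Sigma^=(x)\cap\partial\st_{F_{\height(\sigma)}}\sigma$, and your vertex-counting argument (using that $\sigma$ and $\eta_x^=$ supply all equator vertices of $\sigma\cup\eta$, that $\eta_x^=\in L\cap\Delta^=(x)\subseteq\Sigma$, and that apartments are full subcomplexes) is a correct justification of exactly that inclusion.
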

\begin{proof}
    By \ref{subc:apartment-intersection} there is an apartment $\Sigma'$ such that $\Sigma\cap\Sigma'=\overline{\st_\Sigma\sigma}$. Let $\tau$ be the opposite of $\sigma$ in $\Sigma'$. Then for any simplex $\theta$ of $\Sigma^=(x)\cap\partial\st_{F_{\height(\sigma)}}\sigma$, the projection $\proj_\theta\tau$ is not a simplex of $\Delta^=(x)$ by \ref{subc:intersection-condition}. Since $\Delta^=(x)\cap(\overline{\lambda_\sigma L}\setminus\st\sigma)$ is contained in $\Sigma^=(x)\cap\partial\st_{F_{\height(\sigma)}}\sigma$, we obtain $\defclpp(\sigma,L,\tau)\cap\Delta^=(x)\subseteq\overline{\st\sigma}$ by \ref{subc:proj-condition}. Now, the assertion follows from \ref{subc:cone-in-filter}.
\end{proof}

\noindent Since we can not achieve $\defclp(\sigma,\partial\st_{F_{\height(\sigma)}}\sigma,\tau)\subseteq F_{\height(\sigma)-1}$ for a simplex $\sigma$ of $\im\restr_\Delta^x$ that has a join factor in the equator complex, it is necessary to cover the boundary of $\st_{F_{\height(\sigma)}}\sigma$ by subcomplexes. Hence, we need a criterion on $\dim\Delta$--sphericity of an union of cones over subcomplexes of $\partial\st\sigma$.

\begin{definition}\label{subc:def-quasi}
    A subcomplex $\Lambda\subseteq\Delta$ contained in $\Delta\setminus\opp^*(y)$ is quasi-star-shaped with respect to $y$ if and only if for any point $z\in\Lambda$ the segment $[z,p_yz]$ joining $z$ and its geodesic projection on $\partial\st y$ is contained in $\Lambda$. We denote the set of subcomplexes that are quasi-star-shaped with respect to $y$ by $\mathcal{Q}_y$.
\end{definition}

\begin{observation}\label{subc:closed-quasi}
    $\mathcal{Q}_y$ is closed under unions and intersections.
\end{observation}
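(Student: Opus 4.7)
The plan is a direct verification from the definition. Both operations preserve the two structural requirements (being a subcomplex and avoiding $\opp^*(y)$) for trivial reasons: arbitrary unions and intersections of subcomplexes of $\Delta$ are subcomplexes, and if every $\Lambda_i$ lies in $\Delta\setminus\opp^*(y)$, then so do $\bigcup_i\Lambda_i$ and $\bigcap_i\Lambda_i$. The only real content is the quasi-star-shaped condition itself, which is a pointwise statement: for every point $z$ in the complex, the segment $[z,p_yz]$ must be contained in the complex. Conditions of this form are automatically preserved by unions and intersections.

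For the union, I would take $z\in\bigcup_i\Lambda_i$ and pick some index $j$ with $z\in\Lambda_j$. Since $\Lambda_j\in\mathcal{Q}_y$, the segment $[z,p_yz]$ is contained in $\Lambda_j$, hence in $\bigcup_i\Lambda_i$. For the intersection, if $z\in\bigcap_i\Lambda_i$, then $z\in\Lambda_i$ for every $i$; each $\Lambda_i$ being quasi-star-shaped with respect to $y$ yields $[z,p_yz]\subseteq\Lambda_i$ for every $i$, and taking the intersection over $i$ gives $[z,p_yz]\subseteq\bigcap_i\Lambda_i$.

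There is no genuine obstacle here; the statement is essentially a tautology given how quasi-star-shapedness is defined. The only thing worth noting explicitly is that $p_yz$ is well defined for every $z$ under consideration because both the union and the intersection remain inside $\Delta\setminus\opp^*(y)$, so Proposition--Definition \ref{building:geodesicproj} applies uniformly.
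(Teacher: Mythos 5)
Your verification is correct and is exactly the (implicit) argument: the paper states this as an Observation with no proof, treating it as immediate from the fact that quasi-star-shapedness is a pointwise condition. Your added remark about $p_yz$ being well defined throughout because both union and intersection stay inside $\Delta\setminus\opp^*(y)$ is the one point genuinely worth spelling out.
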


\begin{observation}\label{subc:quasi-deformation}
    For any $\Lambda\in\mathcal{Q}_y$, there is a strong deformation retraction of $\Lambda$ onto $\Lambda\cap\partial\st y$ induced by the geodesic projection to $\partial\st y$. Specially $\Lambda$ and $\Lambda\cap\partial\st y$ are homotopy equivalent.
\end{observation}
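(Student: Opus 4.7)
The plan is to write down the deformation explicitly using maps already at hand. Define
\[
H\colon \Lambda\times[0,1]\longrightarrow\Lambda,\qquad H(z,t)=r_\Delta(z,p_yz,t),
\]
combining the geodesic projection $p_y$ from \ref{building:geodesicproj} with the geodesic interpolation $r_\Delta$ from \ref{building:deformation}. I then verify that this is a strong deformation retraction onto $\Lambda\cap\partial\st y$. The only point that needs geometric input is checking that $r_\Delta$ can be evaluated at $(z,p_yz)$ at all, i.e.\ that $d(z,p_yz)<\pi$. Since $p_yz\in\partial\st y\subseteq\overline{\st y}$, the point $p_yz$ lies in a closed chamber containing $y$, hence $d(y,p_yz)\leq\pi/2$ by \ref{building:chamberdiameter}. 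If $p_yz\in[y,z]$, then $d(z,p_yz)\leq d(y,z)<\pi$ since $z\notin\opp^*(y)$; if instead $z\in[y,p_yz]$, then $d(z,p_yz)=d(y,p_yz)-d(y,z)\leq\pi/2$. Either way the interpolation is defined.

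Next I check the four defining properties. That $H$ takes values in $\Lambda$ is exactly the quasi-star-shapedness assumption: $[z,p_yz]\subseteq\Lambda$ for every $z\in\Lambda$, and $H(z,t)$ lies on that segment by construction. Continuity of $H$ follows at once from continuity of $p_y$ (Proposition/Definition \ref{building:geodesicproj}) and continuity of $r_\Delta$ (Proposition/Definition \ref{building:deformation}); because we may work with the metric topology on finite subcomplexes and the metric and weak topologies agree there, this gives continuity in the sense needed for a strong deformation retraction.

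Finally, $H(z,0)=r_\Delta(z,p_yz,0)=z$ and $H(z,1)=r_\Delta(z,p_yz,1)=p_yz\in\Lambda\cap\partial\st y$ by the defining property of $r_\Delta$. For $z\in\Lambda\cap\partial\st y$, the condition $z\in[y,z]$ is tautological, so $p_yz=z$ by uniqueness in \ref{building:geodesicproj}, and then $H(z,t)=r_\Delta(z,z,t)=z$ for all $t$. Thus $H$ is a strong deformation retraction of $\Lambda$ onto $\Lambda\cap\partial\st y$, and the asserted homotopy equivalence follows. The only step with any content is the distance bound $d(z,p_yz)<\pi$, and as shown that is immediate from the chamber diameter estimate.
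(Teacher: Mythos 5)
Your proof is correct and is the argument the paper leaves implicit: the observation is stated without proof, and the intended homotopy is exactly $H(z,t)=r_\Delta(z,p_yz,t)$, with quasi-star-shapedness guaranteeing $H$ stays in $\Lambda$. You also correctly supply the small point that $d(z,p_yz)<\pi$ so that $r_\Delta$ is defined, and that $p_yz=z$ on $\partial\st y$ by the uniqueness in \ref{building:geodesicproj}, which makes the retraction strong.
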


\begin{lemma}\label{subc:cones-are-quasi}
    Let $\sigma\op\tau$ be opposite simplices of $\Delta$ and let $y$ be a point of $\sigma$. Furthermore let $\theta$ be a simplex of $\overline{\st\sigma}$. Then $\defcl(\sigma,\theta,\tau)$ is contained in $\mathcal{Q}_y$.
\end{lemma}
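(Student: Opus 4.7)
The plan is to pass to a single apartment and exploit the star--shapedness of closed stars. First, pick an apartment $\Sigma$ containing $\sigma$, $\tau$, and $\theta$; this exists because a chamber $C\supseteq\sigma\cup\theta$ and a chamber $C'\supseteq\tau$ always lie in a common apartment. Then $\lambda_\sigma\theta$ and $\proj_\tau\lambda_\sigma\theta$ both lie in $\Sigma$, and so $\defclpp(\sigma,\theta,\tau)$, being the Tits convex hull of two simplices of a common apartment, is a subcomplex of $\Sigma$. Let $y^{*}$ be the antipode of $y$ in $\Sigma$; since $\sigma\op\tau$, we have $y^{*}\in\tau\subseteq\st\tau$. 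As $\Sigma$ is a sphere, $\opp^{*}(y)\cap\Sigma=\{y,y^{*}\}$, and both these points lie in $\st\sigma\cup\st\tau$. Therefore $\defcl(\sigma,\theta,\tau)\cap\opp^{*}(y)=\emptyset$, establishing the ambient inclusion $\defcl(\sigma,\theta,\tau)\subseteq\Delta\setminus\opp^{*}(y)$.

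Now fix $z'\in\defcl(\sigma,\theta,\tau)$. Because $z'\in\Sigma\setminus\{y,y^{*}\}$, Proposition \ref{building:segments} gives a unique segment $[y,z']\subseteq\Sigma$. The $\pi$--convexity of $\defclpp(\sigma,\theta,\tau)\cap\Sigma$, together with $y,z'\in\defclpp(\sigma,\theta,\tau)$, yields $[y,z']\subseteq\defclpp(\sigma,\theta,\tau)$. Since $z'\notin\st\sigma=\st y$, Proposition--Definition \ref{building:geodesicproj} places $p_yz'$ on $[y,z']$, so $[z',p_yz']\subseteq\defclpp(\sigma,\theta,\tau)$.

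It remains to show $[z',p_yz']\cap(\st\sigma\cup\st\tau)=\emptyset$. Extend $[y,z']$ inside $\Sigma$ to the great semicircle from $y$ to $y^{*}$ through $z'$. The closed star $\overline{\st\sigma}$ is star--shaped with respect to $\sigma\ni y$: for any $p\in\overline{\st\sigma}$, the point $p$ lies in some closed simplex $\overline{\eta}\supseteq\sigma$, and such a closed simplex is $\pi$--convex by Corollary \ref{building:chamberdiameter}, so $[y,p]\subseteq\overline{\eta}\subseteq\overline{\st\sigma}$. Walking the semicircle from $y$ one therefore exits $\overline{\st\sigma}$ exactly once, precisely at $p_yz'$, so the intersection of the semicircle with the open star $\st\sigma$ is the initial half--open arc $[y,p_yz')$. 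Symmetrically, the intersection of the semicircle with $\st\tau$ is a half--open terminal arc of the form $(q,y^{*}]$ for some point $q$. Since $z'$ lies on the semicircle and outside both open stars, $z'$ lies in the closed middle arc $[p_yz',q]$; hence $[z',p_yz']\subseteq[p_yz',q]$, which avoids both $\st\sigma$ and $\st\tau$. The main obstacle I anticipate is verifying cleanly that $\defclpp(\sigma,\theta,\tau)$ really sits inside $\Sigma$ even though its two defining simplices are opposite in $\Sigma$ (a standard but delicate point about Tits convex hulls of antipodal simplex pairs); once this is in hand, the star--shape argument and the position of $z'$ relative to $p_yz'$ and $q$ on the semicircle finish the proof.
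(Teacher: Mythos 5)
Your proof is correct and follows essentially the same route as the paper's: you place $\defclpp(\sigma,\theta,\tau)$ inside an apartment, extend the geodesic from $y$ through $z'$ to the unique antipode lying in $\tau$, invoke convexity of $\defclpp$ to keep the whole great arc inside, and identify the portion outside $\st\sigma\cup\st\tau$ as the closed middle arc from $p_y z'$ to its counterpart near $\tau$. The paper's version is terser, citing Lemma~\ref{building:projection} for the identity $s\setminus(\st y\cup\st z)=[p_y u, p_z u]$ rather than spelling out the star-shape argument, but the idea and the structure of the argument are identical.
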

\begin{proof}
    Since $\defclpp(\sigma,\theta,\tau)$ is contained in an apartment, there is only one antipode $z$ of $y$ in $\defclpp(\sigma,\theta,\tau)$. Certainly $z$ is a point of $\tau$. For any point $u\in\defcl(\sigma,\theta,\tau)$, the geodesic segment $s$ joining $y, z$ going through $u$ is contained in $\defclpp(\sigma,\theta,\tau)$, because $s=[y,u]\cup[u,z]$ and $\defclpp(\sigma,\theta,\tau)$ is convex. From \ref{building:projection}, it follows that $s\setminus(\st y\cup\st z)=[p_yu,p_zu]$. Then $[p_yu,u]\subseteq [p_yu,p_zu]$ lies in $\defcl(\sigma,\theta,\tau)$.
\end{proof}

\begin{cor}\label{subc:cones-contractible}
    Let $\sigma\op\tau$ be  opposite simplices of $\Delta$ and let $L$ be a subcomplex of $\lk\sigma$. Then $\defclp(\sigma,L,\tau)$ is ($\dim \sigma+\dim L+1$)-dimensional and contractible.
\end{cor}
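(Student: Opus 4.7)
The plan is to handle the two assertions separately, building on the subsphere description of $\defclpp(\sigma,\theta,\tau)$. For the dimension claim, I would use that $\defclpp(\sigma,\theta,\tau) = \conv(\lambda_\sigma\theta,\proj_\tau\lambda_\sigma\theta)$ is the Tits convex hull of an opposite simplex pair, which lies in any apartment containing both simplices and there coincides with the Coxeter-convex hull --- the intersection of all walls through the two simplices, a subsphere of dimension $\dim\sigma+\dim\theta+1$. The top-dimensional simplex $\proj_\tau\lambda_\sigma\theta$ belongs to $\st\tau$ and hence survives in $\defclp(\sigma,\theta,\tau) = \defclpp(\sigma,\theta,\tau) \setminus \st\sigma$; taking unions over $\theta \in \simp(L)$ then yields $\dim\defclp(\sigma,L,\tau) = \dim\sigma + \dim L + 1$.

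For contractibility I would induct on the number of simplices of $L$. The base case $L = \overline\theta$ (a single closed simplex) reduces to showing $\defclp(\sigma,\theta,\tau) = \defclpp(\sigma,\theta,\tau) \setminus \st\sigma$ is contractible; here I use that $\defclpp(\sigma,\theta',\tau) \subseteq \defclpp(\sigma,\theta,\tau)$ whenever $\theta' \leq \theta$, so the union over subfaces collapses to a single $\defclpp$. The subsphere $\defclpp(\sigma,\theta,\tau)$ is a PL sphere of dimension $\dim\sigma+\dim\theta+1$, and $\st\sigma$ meets it in the open star of $\sigma$ inside this subsphere --- a PL open ball, since its link $\lk_{\defclpp(\sigma,\theta,\tau)}\sigma$ is a sphere of dimension $\dim\theta$. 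Removing an open PL ball from a PL sphere leaves a closed PL disk, which is contractible.

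For the inductive step I pick a top-dimensional simplex $\theta_0 \in L$ and write $L = L_1 \cup L_2$ with $L_1 = L \setminus \{\theta_0\}$ and $L_2 = \overline{\theta_0}$, so that $\defclp(\sigma,L,\tau) = \defclp(\sigma,L_1,\tau) \cup \defclp(\sigma,L_2,\tau)$ with both pieces contractible by induction. Proposition \ref{mainconstr}~a) then reduces the problem to contractibility of the intersection $\defclp(\sigma,L_1,\tau) \cap \defclp(\sigma,L_2,\tau)$. This intersection is the main obstacle: it need not equal the naive candidate $\defclp(\sigma,\partial\theta_0,\tau)$, because the Coxeter subsphere $\conv(\lambda_\sigma\theta,\proj_\tau\lambda_\sigma\theta)$ depends only on which walls pass through $\lambda_\sigma\theta$, and so distinct simplices $\theta_1,\theta_2 \in L$ can define coinciding subspheres whose intersection is strictly larger than the subsphere for $\theta_1\cap\theta_2$. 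Since the intersection nevertheless sits inside the single PL sphere $\defclpp(\sigma,\theta_0,\tau)$, I would combine the quasi-star-shaped retraction of Lemma \ref{subc:cones-are-quasi} with respect to a point $y \in \sigma$ --- which deformation retracts the intersection onto a subcomplex of $\partial\lambda_\sigma\theta_0 \setminus \st\sigma$ --- with a secondary induction on $\dim\theta_0$ and the base case argument to conclude contractibility.
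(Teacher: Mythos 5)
Your dimension argument is fine, and you have correctly put your finger on the real difficulty of the inductive approach: $\defclp(\sigma,L_1,\tau)\cap\defclp(\sigma,L_2,\tau)$ need not be $\defclp(\sigma,\partial\theta_0,\tau)$, because the subsphere $\defclpp(\sigma,\theta,\tau)$ only records which walls contain $\lambda_\sigma\theta$. But the fix you sketch does not close the gap, and in fact the induction itself is a detour.

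Two concrete problems with the inductive step. First, Lemma \ref{subc:cones-are-quasi} puts $\defcl(\sigma,\theta,\tau)$, not $\defclp(\sigma,\theta,\tau)$, into $\mathcal{Q}_y$. For $y\in\sigma$ with antipode $z\in\tau$, the complex $\defclp(\sigma,L_1,\tau)\cap\defclp(\sigma,L_2,\tau)$ contains $\tau$ and hence $z\in\opp(y)$, so it is not contained in $\Delta\setminus\opp^*(y)$ and is not in $\mathcal{Q}_y$; the retraction toward $\partial\st\sigma$ you invoke is simply not defined on all of it. Second, even after splitting off the $\overline{\st\tau}$ part, pushing the remainder onto \emph{some} subcomplex of $\partial\lambda_\sigma\theta_0\setminus\st\sigma$ leaves you with a subcomplex whose contractibility is exactly the kind of statement you set out to prove; the appeal to ``a secondary induction on $\dim\theta_0$ and the base case argument'' is not a termination argument. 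Your PL-ball analysis for the base case is correct, but it is more machinery than the paper uses anywhere.

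The intended argument is direct and needs no induction on $L$ at all. Fix $y\in\sigma$ and let $z\in\tau$ be its antipode. Every $u\in\defclp(\sigma,L,\tau)$ lies in some convex $\defclpp(\sigma,\theta,\tau)$, so $[u,z]\subseteq\defclpp(\sigma,\theta,\tau)$; and since the geodesic from $y$ to $z$ through $u$ meets $\st\sigma=\st y$ only in its initial piece $[y,p_yu)$ (as in the proof of \ref{subc:cones-are-quasi}), while $u\notin\st\sigma$, the whole tail $[u,z]$ avoids $\st\sigma$. Thus $[u,z]\subseteq\defclp(\sigma,L,\tau)$. Moreover $z$ has no antipode inside $\defclp(\sigma,L,\tau)$: the only candidate in the subsphere $\defclpp(\sigma,\theta,\tau)$ is $y$, which has been removed. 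Hence $(u,t)\mapsto r_\Delta(u,z,t)$ is a well-defined contraction of $\defclp(\sigma,L,\tau)$ to $z$, continuous by \ref{building:deformation}. Equivalently, note $\defcl(\sigma,\theta,\tau)=\defcl(\tau,\alpha\theta,\sigma)\in\mathcal{Q}_z$ with $\alpha=\proj_\tau|_{\lk\sigma}$; by \ref{subc:closed-quasi} and \ref{subc:quasi-deformation} the projection $p_z$ retracts $\defcl(\sigma,L,\tau)$ onto $\defcl(\sigma,L,\tau)\cap\partial\st\tau$, and gluing with the identity on $\overline{\st_{\defclp(\sigma,L,\tau)}\tau}=\overline{\tau}*\alpha L$ retracts $\defclp(\sigma,L,\tau)$ onto that cone. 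Either way the intersection analysis that stalls your induction never arises.
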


\begin{lemma}\label{subc:glue-cones}
    Let $I\neq\emptyset$ be an index set and let $\{L_i\mid i\in I\}$ be a family of non empty subcomplexes of $\lk\sigma$. Furthermore let $\{\tau_i\mid i\in I\}$ be a family of simplices opposite to $\sigma$. For subsets $J\subseteq I$ we put
    \begin{equation*}
        \defclp(J):=\bigcup\nolimits_{j\in J}\defclp(\sigma,L_j,\tau_j)\text{ and }
        \defcl(J):=\bigcup\nolimits_{j\in J}\defcl(\sigma,L_j,\tau_j)
    \end{equation*}
     Suppose $L_i\cap\bigcup_JL_j$ is $\dim\lk\sigma$--spherical, for any $i\in I$ and any non empty, finite $J\subseteq I$. Then $\defclp(I)$ is $\dim\Delta$--spherical.
\end{lemma}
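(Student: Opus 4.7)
The plan is induction on $\abs I$, after the standard compactness reduction to finite $I$: any continuous image of a sphere in $\defclp(I)$ lies in some $\defclp(J)$ for finite $J\subseteq I$, and the sphericity hypothesis clearly restricts to $J$. In the base case $\abs I=1$, $\defclp(\sigma,L_1,\tau_1)$ is contractible by Corollary~\ref{subc:cones-contractible}, and its dimension is $\dim\sigma+\dim L_1+1=\dim\Delta$ since the hypothesis applied with $i=1$, $J=\{1\}$ forces $\dim L_1=\dim\lk\sigma$.

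For the inductive step I fix $i_0\in I$ and write $\defclp(I)=X\cup Y$ with $Y=\defclp(\sigma,L_{i_0},\tau_{i_0})$ (contractible) and $X=\defclp(I\setminus\{i_0\})$ ($\dim\Delta$-spherical by induction, since the hypothesis restricts to $I\setminus\{i_0\}$). Proposition~\ref{mainconstr}(a), applied with $Z=X\cup Y$ and the single extra piece $Y$, reduces everything to showing that $X\cap Y$ is $(\dim\Delta-2)$-connected.

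For $X\cap Y=\bigcup_{j\neq i_0}\bigl(\defclp(\sigma,L_j,\tau_j)\cap Y\bigr)$ I would deformation-retract in two stages. First, for each $j$, retract $\st\tau_j\cap\defclpp(\sigma,L_j,\tau_j)$ onto $\partial\st\tau_j\cap\defclpp\subseteq\defcl(\sigma,L_j,\tau_j)$ by geodesic projection away from an antipode $z_j\in\tau_j$ of a fixed $y\in\sigma$. Lemma~\ref{subc:cones-are-quasi} and Observation~\ref{subc:closed-quasi} place the resulting subcomplex into $\mathcal{Q}_y$, so Observation~\ref{subc:quasi-deformation} retracts it further onto its trace on $\partial\st\sigma$. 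By Lemma~\ref{subc:cone-cap-star-boundary} the trace of a single $\defclp(\sigma,L,\tau)$ equals $\bigcup_{\theta\in L}(\partial\lambda_\sigma\theta\setminus\st\sigma)$, which combinatorially is the spherical join $\partial\sigma*L$ (independent of $\tau$). Hence the trace of $X\cap Y$ is $\partial\sigma*\bigl(L_{i_0}\cap\bigcup_{j\neq i_0}L_j\bigr)$. Applying the hypothesis with $i=i_0$ and $J=I\setminus\{i_0\}$, the second join factor is $\dim\lk\sigma$-spherical, so the join is $(\dim\sigma-1+\dim\lk\sigma)=(\dim\Delta-2)$-connected, as required.

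The main obstacle is that $\defclp$ is \emph{not} itself quasi-star-shaped with respect to $y$: it contains $\proj_\tau\lambda_\sigma\theta\in\st\tau$, which houses the antipode $z\in\tau$ of $y$. Only $\defcl$ lies in $\mathcal{Q}_y$, forcing the preliminary $z_j$-retractions. Moreover the open stars $\st\tau_j$ can overlap (whenever $\tau_i\cup\tau_j$ happens to be a simplex of $\Delta$), so these local retractions must be arranged compatibly on the union. Since each $\defclpp(\sigma,\theta,\tau_j)$ is contained in a single apartment and is metrically convex, the local retractions can be realized as a well-defined piecewise-geodesic deformation, which is the main technical point to verify.
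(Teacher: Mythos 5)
Your overall plan---induction on $\#I$ after the compactness reduction, the base case via~\ref{subc:cones-contractible}, and the reduction via~\ref{mainconstr}~a) to showing $(\dim\Delta-2)$-connectedness of $\defclp(\{i_0\})\cap\defclp(I\setminus\{i_0\})$---matches the paper. The gap lies in how you treat that intersection. You propose a two-stage deformation retraction whose first stage pushes $\st\tau_j\cap\defclpp(\sigma,L_j,\tau_j)$ out to $\partial\st\tau_j\cap\defclpp(\sigma,L_j,\tau_j)$ by a geodesic projection based at $z_j\in\tau_j$. This retraction does not exist. Writing $\alpha$ for the isomorphism $\lk\sigma\to\lk\tau_{i_0}$ induced by $\proj_{\tau_{i_0}}$, the portion of $\defclp(\{i_0\})\cap\defclp(I\setminus\{i_0\})$ meeting $\overline{\st\tau_{i_0}}$ is the closed cone $\overline{\lambda_{\tau_{i_0}}(\alpha L_{i_0}\cap\bigcup_{j\in J'}\alpha L_j)}$ with apex $\tau_{i_0}$, where $J'=\{j\neq i_0\mid\tau_j=\tau_{i_0}\}$, and a cone admits no deformation retraction onto its base unless the base is already contractible---here the base is, by hypothesis, a $\dim\lk\sigma$--spherical complex joined with $\partial\tau_{i_0}$. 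The apex of the cone carries the antipode $z_{i_0}$, which is a point of the intersection and is not in the domain of any geodesic projection $p_{z_{i_0}}$. For the same reason $\defclp$ and $\defcl$ are not homotopy equivalent in general: $\defclp(\sigma,L,\tau)$ is a contractible cone, while $\defcl(\sigma,L,\tau)$ is in $\mathcal{Q}_y$ and retracts to $\partial\sigma*L$, which is a nontrivial wedge of spheres whenever $L$ is. So the first stage of your retraction would already change the homotopy type.

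The paper avoids this by not retracting the star piece away. It splits $\defclp(\{i_0\})\cap\defclp(I\setminus\{i_0\})$ as the union of the $\defcl$-piece $\defcl(\{i_0\})\cap\defcl(I\setminus\{i_0\})$ (which lies in $\mathcal{Q}_y$ and retracts to $\partial\sigma*(L_{i_0}\cap\bigcup_{j\neq i_0}L_j)$, as you correctly compute) and the contractible cone $\overline{\lambda_{\tau_{i_0}}(\alpha L_{i_0}\cap\bigcup_{j\in J'}\alpha L_j)}$, and then invokes~\ref{mainconstr}~a) once more, after checking that their intersection $(\alpha L_{i_0}\cap\bigcup_{j\in J'}\alpha L_j)*\partial\tau_{i_0}$ is $(\dim\Delta-1)$-spherical using the hypothesis applied to $J'$. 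So the Mayer--Vietoris gluing that you use once at the outer level is used a second time at the inner level; that second application is exactly what replaces your nonexistent first-stage retraction. Finally, your remark that ``open stars $\st\tau_j$ can overlap whenever $\tau_i\cup\tau_j$ is a simplex'' is slightly off: since all $\tau_j$ are opposite $\sigma$ they share a type, so $\tau_i\cup\tau_j$ is a simplex only if $\tau_i=\tau_j$, and the stars are disjoint or equal---a fact the paper uses to isolate $J'$.
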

\begin{proof}
    Since it is sufficient to give a proof for $\# I<\infty$, we use induction on $\# I$. The case $\# I =1 $ is clear by \ref{subc:cones-contractible}.

    Now assume $\#I>1$. We put $J=I\setminus\{i\}$ and $J'=\{j\in J\mid \tau_j=\tau_i\}$ for some $i\in I$. Let $y$ be a point of $\sigma$ and let $\alpha:\lk\sigma\rightarrow\lk\tau_i$ denote the isomorphism induced by $\proj_{\tau_i}$. Recall that there is a labeling on $\Delta$. Since all $\tau_j$ have the same labels, $\st\tau_j\cap\st\tau_l$ is empty, unless $\tau_j=\tau_l$. Furthermore $\st\tau_i\cap\defcl(J)$ and $\st\tau_i\cap\defcl(\{i\})$ are empty. We therefore get
    \begin{align*}
        \defclp(\{ i\})\cap\defclp(J)
        &= \left(\defcl(\{i\}\cup\overline{\st_{\defclp(\{i\})}\tau_i}\right)
        \cap\left(\defcl(J)\cup\overline{\st_{\defclp(J')}\tau_i}\right)\\
        &= \left(\defcl(\{i\})\cap\defcl(J)\right)\cup
        \left(\overline{\st_{\defclp(\{i\})}\tau_i}\cap\overline{\st_{\defclp(J')}\tau_i}\right)\\
        &= \left(\defcl(\{i\})\cap\defcl(J)\right)\cup
        \overline{\lambda_{\tau_i}\left(\alpha L_i\cap\bigcup\nolimits_{j\in
        J'}\alpha L_j\right)}
    \end{align*}
    The second complex of this union is contractible, since it is a cone with tip in $\tau_i$. According to \ref{subc:cones-are-quasi} and \ref{subc:closed-quasi}, the first complex is contained in $\mathcal{Q}_y$; and by \ref{subc:quasi-deformation} and \ref{subc:cone-cap-star-boundary}, it is homotopy equivalent to the ($\dim\Delta-1$)--spherical complex
    \begin{align*}
        \partial\st\sigma\cap\defcl(\{ i\})\cap\defcl(J) &=
        (\partial\st\sigma\cap\defcl(\sigma,L_i,\tau_i))\cap
        \bigcup\nolimits_{j\in J}(\partial\st\sigma \cap\defcl(\sigma,L_j,\tau_j))\\
        &= (\partial \sigma*L_i)\cap \bigcup\nolimits_{j\in J} (\partial \sigma*L_j)=
        \partial \sigma*\left(L_i\cap \bigcup\nolimits_{j\in
        J}L_j\right)
    \end{align*}
    Their intersection
    \begin{equation*}
        (\defcl(\{i\})\cap\defcl(J))\cap \overline{\lambda_{\tau_i}\left(\alpha
        L_i\cap\bigcup\nolimits_{j\in J'}\alpha L_j\right)} =\left(\alpha
        L_i\cap\bigcup\nolimits_{j\in J'}\alpha L_j\right)*\partial \tau_i
    \end{equation*}
    is ($\dim\Delta-1$)--spherical as well. Then $\defclp(\{ i\})\cap\defclp(J)$ is ($\dim\Delta-1$)--spherical by \ref{mainconstr}~a). Hence, $\defclp(I)$ is $\dim\Delta$--spherical by the induction hypothesis, \ref{subc:cones-contractible} and again by \ref{mainconstr}~a).
\end{proof}

    %
%   sphproof.tex
%
%   22.06.2010
%

\subsection{Proof of Theorem B}

\noindent Now we have got all pieces that are needed to complete the proof of theorem~\ref{theorem-b}.

\begin{prop}
    Open hemisphere complexes of thick, spherical buildings are spherical.
\end{prop}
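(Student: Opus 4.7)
The plan combines an induction on $\dim\Delta$ with a downward induction along the filtration $F_0\subseteq F_1\subseteq\cdots\subseteq F_N=\Delta^{\geq}(x)$ from Corollary~\ref{filter:def-filter}. The case $\dim\Delta=0$ is trivial, and Proposition~\ref{hc:join} together with the join formula for sphericity reduces to $\Delta$ irreducible, in which $\Delta_{\ver}(x)=\Delta$, $\Delta_{\hor}(x)=\emptyset$, and $F_0=\Delta^{>}(x)$. The top $F_N=\Delta^{\geq}(x)$ is $\dim\Delta$-spherical by Corollary~\ref{hc:closed-hc-main}, so the task is to transfer $\dim\Delta$-sphericity one filtration step downward at a time.

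The step $F_k\to F_{k-1}$ is the content of the proof. For each height-$k$ simplex $\sigma\in I_k\subseteq\im\restr_\Delta^x$ the main auxiliary object is constructed from Section~3.2 as follows. I cover $\overline{\lk_{F_k}\sigma}$ by a family $\{L_i\}_{i\in I}$ of subcomplexes such that (i) each $L_i\cap\Delta^{=}(x)$ lies in some single apartment of $\Delta$ containing $x$ and $\sigma$ (using Lemma~\ref{subc:apartment-intersection} and the join decomposition $\lk_{F_k}\sigma=(\lk\sigma)^{>}(p_\sigma x)*(\lk\sigma)_{\hor}(p_\sigma x)$ from Corollary~\ref{filter:def-filter}) and (ii) every finite intersection $L_i\cap\bigcup_JL_j$ is $\dim\lk\sigma$-spherical, where the sphericity of the vertical factor $(\lk\sigma)^{>}(p_\sigma x)$ available from the outer induction on $\dim\Delta$ is what powers condition~(ii). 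Then Corollary~\ref{subc:apartment-condition} supplies opposites $\tau_i\op\sigma$ with $\defclp(\sigma,L_i,\tau_i)\subseteq F_{k-1}$, and Lemma~\ref{subc:glue-cones} makes the union $G_\sigma=\bigcup_i\defclp(\sigma,L_i,\tau_i)\subseteq F_{k-1}$ itself $\dim\Delta$-spherical. Lemma~\ref{subc:cone-cap-star-boundary} identifies $G_\sigma\cap\partial\st\sigma=\partial\overline\sigma*\overline{\lk_{F_k}\sigma}=\partial\st_{F_k}\sigma$, so in particular $\partial\st_{F_k}\sigma\subseteq G_\sigma$.

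With $G_\sigma$ in hand I apply Proposition~\ref{mainconstr}(b) to the \emph{enlarged} decomposition $F_k=F_{k-1}\cup\bigcup_{\sigma\in I_k}Y_\sigma$ with $Y_\sigma=\overline{\st_{F_k}\sigma}\cup G_\sigma$, $Z=F_k$, $X=F_{k-1}$. Since $G_\sigma\subseteq F_{k-1}$, adjoining the cones does not change the union: $X\cup\bigcup_\sigma Y_\sigma=F_k$. Lemma~\ref{filter:faceheight} and the containment $G_\sigma\subseteq F_{k-1}$ imply that all four kinds of pairwise intersection, namely $\overline{\st_{F_k}\sigma}\cap\overline{\st_{F_k}\sigma'}$, $\overline{\st_{F_k}\sigma}\cap G_{\sigma'}$, $G_\sigma\cap\overline{\st_{F_k}\sigma'}$ and $G_\sigma\cap G_{\sigma'}$, lie in $F_{k-1}=X$. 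Finally $X\cap Y_\sigma=\partial\st_{F_k}\sigma\cup G_\sigma=G_\sigma$, which is $(\dim\Delta-1)$-connected by construction. Proposition~\ref{mainconstr}(b) therefore promotes the $(\dim\Delta-1)$-connectivity of $F_k$ to that of $F_{k-1}$, closing the induction and yielding $\dim\Delta$-sphericity of $F_0=\Delta^{>}(x)$.

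The main obstacle I expect is the joint construction of the family $\{L_i\}$ satisfying (i) and (ii) simultaneously: condition~(i) forces the pieces to be small enough that their equatorial parts fit into a single apartment through $x$ and $\sigma$, whereas condition~(ii) wants them large enough that their mutual intersections remain $\dim\lk\sigma$-spherical. The product structure $(\lk\sigma)^{>}(p_\sigma x)*(\lk\sigma)_{\hor}(p_\sigma x)$ is the essential lever: the chambers of the horizontal factor organize an apartment-by-apartment cover, while the inductively spherical vertical factor controls the connectivity of the intersections. Pushing this covering argument through uniformly for every $\sigma\in I_k$ and every filtration level $k$ is the real combinatorial work behind the otherwise clean double induction.
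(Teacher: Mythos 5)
Your proposal is correct and mirrors the paper's proof: the same filtration $F_0\subseteq\cdots\subseteq F_N$ from Corollary~\ref{filter:def-filter}, the same auxiliary complexes (your $Y_\sigma=\overline{\st_{F_k}\sigma}\cup G_\sigma$ is exactly the paper's $K_\sigma$, namely $\defclpp(\sigma,L,\tau)$ when $(\lk\sigma)_\hor(p_\sigma x)=\emptyset$ and $\bigcup_{A}\defclpp(\sigma,L*A,\tau_A)$ otherwise, with the family $\{L_i\}$ being $\{L*A\}_A$ for apartments $A$ of the horizontal factor through a fixed chamber), the same use of Corollary~\ref{subc:apartment-condition} and Lemma~\ref{subc:glue-cones}, and the same appeal to Proposition~\ref{mainconstr}~b) to push sphericity down the filtration. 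The only cosmetic departures are that you induct on $\dim\Delta$ rather than on $\dim\Delta^=(x)$ and reduce at the outset to $\Delta$ irreducible rather than merely to $\Delta_\hor(x)=\emptyset$; both variants work and feed the sphericity of $(\lk\sigma)^>(p_\sigma x)$ into Lemma~\ref{subc:glue-cones} in the same way.
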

\begin{proof}
    Suppose $\Delta$ is thick. We use induction on $d=\dim\Delta^=(x)$. If $\Delta^=(x)$ is empty then $\Delta^>(x)=\Delta^\geq(x)$ is $\dim\Delta$--spherical by \ref{hc:closed-hc-main}. Let $d\geq 0$ and suppose, open hemisphere complexes with equator complex of dimension less than $d$ are spherical.

    If $\Delta_\hor(x)\neq\emptyset$, we are done by \ref{hc:join} and the induction hypothesis. Hence, assume $\Delta_\hor(x)$ is empty. We show that for any non empty simplex $\sigma$ of $\im\restr_\Delta^x$ there is a complex $K_\sigma$ fullfilling the following two conditions.
    \begin{itemize}
        \item[]
            Condition 1. $K_\sigma\subseteq F_{\height(\sigma)}$ and
            $K_\sigma=\st_{F_{\height(\sigma)}}\sigma\cup(K_\sigma\cap F_{\height(\sigma)-1})$.
        \item[]
            Condition 2. $K_\sigma\cap F_{\height(\sigma)-1}$ is $\dim\Delta$--spherical.
    \end{itemize}
    \noindent Assuming, we have such $K_\sigma$, we argue as follows: For $1\leq k\leq d+1$, let $I_k$ denote the set of simplices from $\im\restr_\Delta^x$ at height $k$. Since  $\st_{F_k}\sigma\cap\st_{F_k}\tau=\emptyset$, for $\sigma,\tau\in I_k$ with $\sigma\neq\tau$ by \ref{filter:def-filter}, we obtain by the first condition:
    \begin{center}
            $F_k= F_{k-1}\cup\bigcup_{\sigma\in I_k}K_\sigma$ and
            $K_\sigma\cap K_\tau\subseteq F_{k-1}$, for $\sigma,\tau\in I_k$ with $\sigma\neq\tau$.
    \end{center}
    Then $F_{k-1}$ is $\dim\Delta$--spherical provided the
    same holds for $F_k$, by \ref{mainconstr}~b) and the second condition. Recall that $\Delta_\hor(x)$ is empty. Hence, $\Delta^>(x)=F_0$ is $\dim\Delta$--spherical, since $F_{d+1}=\Delta^\geq(x)$ is $\dim\Delta$--spherical by \ref{hc:closed-hc-main}.\medskip

    \noindent It remains to find the complexes $K_\sigma$. Let $\sigma$ be a non empty simplex of $\im\restr_\Delta^x$. We put $L=(\lk\sigma)^>(p_\sigma x)$. There are two cases:
    \medskip

    \noindent$\underline{(\lk\sigma)_\hor(p_\sigma x)=\emptyset}\ $: In this case we have $\lk_{F_{\height(\sigma)}}\sigma=L$ by \ref{filter:def-filter}. Since $L\cap\Delta^=(x)$ is empty, \ref{subc:apartment-condition} provides us with an opposite $\tau\op\sigma$ such that $\defclpp(\sigma,L,\tau)$ is a subcomplex of $F_{\height(\sigma)}$ and $\defclp(\sigma,L,\tau)=\defclpp(\sigma,L,\tau)\cap F_{\height(\sigma)-1}$. Certainly, $\defclpp(\sigma,L,\tau)=\st_{F_{\height(\sigma)}}\sigma\cup\defclp(\sigma,L,\tau)$ since $\st_{F_{\height(\sigma)}}\sigma=\lambda_\sigma L$. Furthermore $\defclp(\sigma,L,\tau)$ is $\dim\Delta$--spherical by \ref{subc:cones-contractible}. Hence, $K_\sigma=\defclpp(\sigma,L,\tau)$ fullfills the two conditions above.
    \medskip

    \noindent$\underline{(\lk\sigma)_\hor(p_\sigma x)\neq\emptyset}\ $: In this case we further put $L_h=(\lk\sigma)_\hor(p_\sigma x)$. Let $C$ be a chamber of $L_h$ and let $\cal{A}$ denote the set of apartments of $L_h$ that contain $C$. For any apartment $A\in\cal{A}$, there is an apartment $\Sigma_A$ of $\Delta$ that contains $x$ and $\overline{\lambda_\sigma A}$.

    Let $C'$ be opposite to $C$ in $A$. Then choose points $y\in C\cup\sigma$ and $y'\in C'\cup\sigma$ and have a look at the triangle $(x,y,y')$. Since $d(x,y)=\pi/2=d(x,y')$ and $\angle_y(x,y')=\pi/2$, equality holds in the spherical law of cosines. Hence, there is an apartment $\Sigma_A$ that contains $x$ and $\overline{\lambda_\sigma A}=\conv(C\cup\sigma,C'\cup\sigma)$.

    Let $A\in\cal{A}$ be arbitrary. Since $(L*A)\cap\Delta^=(x)$ is contained in $\Sigma_A$, we get an opposite $\tau_A\op\sigma$ by \ref{subc:apartment-condition} such that $\defclpp(\sigma,L*A,\tau_A)$ is a subcomplex of $F_{\height(\sigma)}$ and $\defclp(\sigma,L*A,\tau_A)=\defclpp(\sigma,L*A,\tau_A)\cap F_{\height(\sigma)-1}$. We define
    \begin{center}
        $K_\sigma = \bigcup_{A\in\cal{A}}\defclpp(\sigma,L*A,\tau_A)$
         and
        $K'_\sigma = \bigcup_{A\in\cal{A}}\defclp(\sigma,L*A,\tau_A)$.
    \end{center}
    Then $K_\sigma$ is a subcomplex of $F_{\height(\sigma)}$, and $K'_\sigma=K_\sigma\setminus\st\sigma$ is its intersection with $F_{\height(\sigma)-1}$. From \ref{filter:def-filter}, we know that $\lk_{F_{\height(\sigma)}}\sigma=L*L_h$. Since $L_h$ is covered by $\cal{A}$, the link of $\sigma$ in $K_\sigma$ is also $L*L_h$. Therefore, the stars of $\sigma$ in $K_\sigma$ and $F_{\height(\sigma)}$ coincide. Hence, $K_\sigma=\st_{F_{\height(\sigma)}}\sigma\cup K'_\sigma$ fullfills the first condition.

    The open hemisphere complex $L$ is $\dim (\lk\sigma)_\ver(p_\sigma x)$--spherical by the induction hypothesis, since $\dim(\lk\sigma)^=(p_\sigma x)<d$. For any $A\in\cal{A}$ and any non empty, finite $\cal{A}'\subseteq\cal{A}$, the intersection $A\cap\bigcup\cal{A}'$ is an union of convex subcomplexes of $A$ each of which contains $C$. Therefore $A\cap\bigcup\cal{A}'$ equals $A$ or is contractible. Then
    \begin{equation*}
        (L*A)\cap\bigcup\nolimits_{A'\in\cal{A}'}(L*A')=
        L*\left(A\cap\bigcup\cal{A}'\right)
    \end{equation*}
    is $\dim\lk\sigma$--spherical. From \ref{subc:glue-cones} we now get the $\dim\Delta$--sphericity of $K'_\sigma=K_\sigma\cap F_{\height(\sigma)-1}$, hence the second condition.
    \medskip
\end{proof}

\begin{prop}
    Open hemisphere complexes of thick, spherical buildings are non contractible.
\end{prop}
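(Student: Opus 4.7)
The plan is to reduce to the case of irreducible $\Delta$ via the join decomposition from \ref{hc:join}, and then to propagate non contractibility from $\Delta^\geq(x) = F_N$ down to $\Delta^>(x) = F_0$ by an iterated Mayer--Vietoris argument along the filtration of \ref{filter:def-filter}, reusing the covers $F_k = F_{k-1} \cup \bigcup_{\sigma \in I_k} K_\sigma$ that were built in the sphericity proof.

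For the reduction I decompose $\Delta_\ver(x) = \Delta_1 * \cdots * \Delta_k$ into irreducible factors. By \ref{hc:join} one has $\Delta^>(x) = \Delta_1(\Omega_\Delta^>(x)) * \cdots * \Delta_k(\Omega_\Delta^>(x))$, and each factor is the open hemisphere complex of the thick irreducible spherical building $\Delta_i$ (thickness descends to join factors, as a panel of $\Delta_i$ joined with a chamber of the complementary factor gives a panel of $\Delta$). Since a join of non empty simplicial complexes is contractible if and only if at least one of its factors is contractible, the general case reduces to the irreducible one, where $\Delta_\hor(x) = \emptyset$ and hence $F_0 = \Delta^>(x)$.

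Assuming $\Delta$ is irreducible, set $n = \dim \Delta$. By Corollary~\ref{hc:closed-hc-main} the complex $F_N = \Delta^\geq(x)$ is $n$--spherical and non contractible, so $\tilde H_n(F_N) \neq 0$. The main claim is that each inclusion $F_{k-1} \hookrightarrow F_k$ is surjective on $\tilde H_n$; composing these surjections will then force $\tilde H_n(F_0) \neq 0$ and we are done. For the surjection I use the cover $F_k = F_{k-1} \cup \bigcup_{\sigma \in I_k} K_\sigma$ together with the facts that $K_\sigma \cap K_\tau \subseteq F_{k-1}$ for $\sigma \neq \tau$ and that $K_\sigma \cap F_{k-1}$ is $n$--spherical (hence in particular $(n-1)$--connected), both of which were established in the proof of the previous proposition (Condition~2). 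The extra input needed here is that every $K_\sigma$ is contractible. Granted this, adjoining the $K_\sigma$'s to $F_{k-1}$ one at a time --- legitimate because every class of $H_n(F_k)$ has compact support and therefore involves only finitely many of them --- a standard Mayer--Vietoris piece combines $\tilde H_n(K_\sigma) = 0$ with the vanishing $\tilde H_{n-1}(K_\sigma \cap F_{k-1}) = 0$ to yield the surjection at each step.

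The hard part will be verifying contractibility of $K_\sigma$. This should follow from the observation that in both cases of the sphericity proof, $K_\sigma$ is a union of convex subcomplexes of the form $\conv(\lambda_\sigma \theta, \proj_{\tau'}\lambda_\sigma \theta)$, each lying in an apartment and containing $\sigma$. Each such summand is star-shaped (in the apartment) from any fixed point $y \in \sigma$, and since every summand contains $y$, so is the union $K_\sigma$. Everything else amounts to routine Mayer--Vietoris bookkeeping on top of results already proved.
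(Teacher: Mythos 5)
Your reduction to the irreducible case is fine, but the central step --- the contractibility of $K_\sigma$ --- is false, and so the claimed surjectivity $\tilde H_n(F_{k-1})\twoheadrightarrow\tilde H_n(F_k)$ breaks down. The star\mbox{-}shapedness argument already has a hole: each summand $\conv(\lambda_\sigma\theta,\proj_{\tau'}\lambda_\sigma\theta)$ of $K_\sigma$ contains the antipode $z\in\tau'$ of your chosen $y\in\sigma$, so $K_\sigma\not\subseteq\Delta\setminus\opp^*(y)$ and there is no well-defined geodesic contraction towards $y$ (in the second case of the sphericity proof there are even several distinct antipodes $z_A$ of $y$ in $K_\sigma$). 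One can in fact compute that $K_\sigma$ is not acyclic. Write $n=\dim\Delta$, $k=\height(\sigma)$ and decompose $K_\sigma=\overline{\st_{F_k}\sigma}\cup(K_\sigma\cap F_{k-1})$, whose intersection is $\partial\st_{F_k}\sigma$. Since $\overline{\st_{F_k}\sigma}$ is a cone and $K_\sigma\cap F_{k-1}$ is $(n-1)$--connected (Condition~2 of the sphericity proof), Mayer--Vietoris gives a surjection $\tilde H_n(K_\sigma)\twoheadrightarrow\tilde H_{n-1}(\partial\st_{F_k}\sigma)$. But $\partial\st_{F_k}\sigma=\partial\overline\sigma\,*\,\lk_{F_k}\sigma$, and by \ref{filter:def-filter} $\lk_{F_k}\sigma=(\lk\sigma)^>(p_\sigma x)*(\lk\sigma)_\hor(p_\sigma x)$: a join of a combinatorial sphere, an open hemisphere complex of the thick, lower-dimensional building $\lk\sigma$, and (possibly) a spherical building. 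By induction on dimension (together with the sphericity result and Solomon--Tits) every nonempty factor has nonzero top reduced homology, so $\tilde H_{n-1}(\partial\st_{F_k}\sigma)\neq 0$ and hence $\tilde H_n(K_\sigma)\neq 0$.

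The failure is not just a gap in your verification: the surjectivity itself is false. Since $F_{k-1}$ is $(n-1)$--connected while $H_n(F_k,F_{k-1})\cong\bigoplus_{\sigma\in I_k}\tilde H_{n-1}(\partial\st_{F_k}\sigma)\neq 0$, the long exact sequence of $(F_k,F_{k-1})$ forces $\tilde H_n(F_k)\to H_n(F_k,F_{k-1})$ to be a nonzero surjection, so $\tilde H_n(F_{k-1})\to\tilde H_n(F_k)$ has nontrivial cokernel; removing the open relative stars is emphatically not a homotopy equivalence (these maps are injective, not surjective). The paper bypasses the filtration altogether for non contractibility: it picks a vertex $y$ of height $1$, notes that $L=\lk_{F_1}y$ is an open hemisphere complex of $\lk y$ lying inside $\Delta^>(x)$, finds a $(\dim\Delta-1)$--sphere in $L$ by induction, and closes it up to a $\dim\Delta$--sphere by coning geodesically from \emph{two distinct} opposites $z',z''$ of $y$ that both lie in $\Delta^>(x)$; producing these two opposites is where thickness and lemma~\ref{subc:apartment-intersection} are used. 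If you want to rescue a Mayer--Vietoris approach, you would need an input other than contractibility of $K_\sigma$ --- the obstruction computed above is precisely the homology created by the sphere that the paper's double-cone construction exhibits.
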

\begin{proof}
    Suppose $\Delta$ is thick. By \ref{hc:join} we further suppose $\Delta_\hor(x)=\emptyset$. We will show the existence of a $\dim\Delta$--sphere in $\Delta^>(x)$ by induction on $\dim\Delta$.

    If $\dim\Delta=0$, then $\Omega^\leq_\Delta(x)=\{x\}$ is a single point. Hence, $\Delta^>(x)=\Delta\setminus\{x\}$ contains a 0--sphere.

    Let $\dim\Delta>0$ and suppose, open hemisphere complexes of dimension less than $\dim\Delta$ contain a topdimensional sphere. If $\Delta^=(x)=\emptyset$, then $\Delta^>(x)$ is a closed, coconvex supported subcomplex and the assertion follows from \ref{hc:closed-hc-main}. We therefore assume $\Delta^=(x)\neq\emptyset$.

    Let $y\in F_1\setminus F_0$ be a vertex. By \ref{filter:def-filter}, its relative link $L=\lk_{F_1}y$ is a subcomplex of $F_0=\Delta^>(x)$ and an open hemisphere complex of $\lk y$. According to the induction hypothesis, there is a ($\dim\Delta-1$)--dimensional sphere $S\subseteq L$. Suppose $y$ has two opposites $z', z''\in\Delta^>(x)$. By \ref{subc:cone-in-filter} and \ref{subc:proj-condition} the union $\defclp(y,L,z')\cup\defclp(y,L,z'')$ is a subcomplex of $\Delta^>(x)$.  This complex contains the two geodesic cones over $S$ with tip in $z'$ and $z''$, hence it contains a $\dim\Delta$--sphere. It remains to show that $y$ has two opposites in $\Delta^>(x)$.

    Let $\Sigma$ be an apartment containing $x$ and $y$. From  \ref{subc:apartment-intersection} we obtain an apartment $\Sigma'$, such that $\Sigma\cap\Sigma'=\overline{\st_\Sigma y}$. Denote by $z'$ the vertex of $\Sigma'$ that is opposite to $y$. According to \ref{subc:intersection-condition}, $z'$ is contained in $\Delta^>(x)$ since the intersection of $\Delta^=(x)$ with $\partial\st_{F_1}y$ is empty and $\proj_\emptyset z'=z'$.  Let $C$ be a chamber of $\st y$ that contains $p_yx$ in its closure. Denote the panel $\proj_{z'}C\setminus\{ z'\}$ by $D$. By \ref{subc:apartment-intersection} there is an apartment $\Sigma''$ such that $\Sigma'\cap\Sigma''=\conv(C,D)$. Denote by $z''$ the vertex of $\Sigma''$ that is opposite to $y$. Then $z''\neq z$, since the vertices of  $\conv(C,D)$ are not opposite to $y$. We show that $z''\in\Delta^>(x)$.
    \begin{figure}[ht]
        \begin{center}
            \includegraphics[scale=0.8]{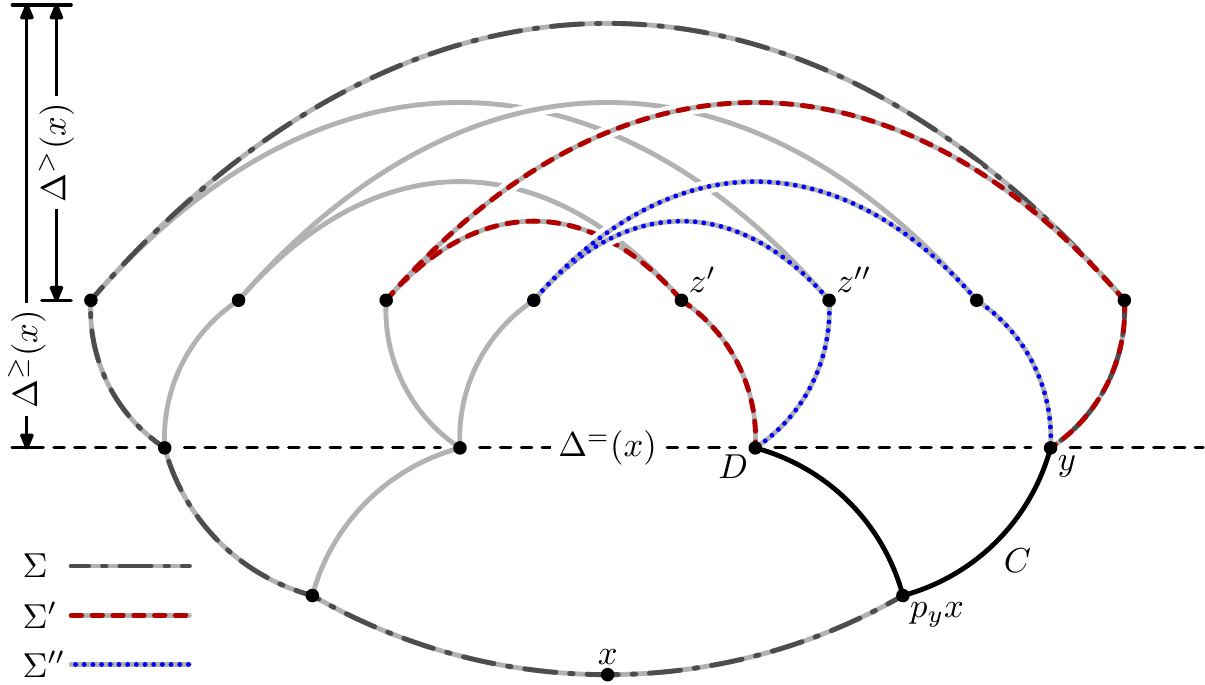}
        \end{center}
    \caption{The construction of antipodes in $\Delta^>(x)$.}
    \end{figure}

    By the triangle inequality we have $d(x,z'')\geq\pi/2$. Suppose $d(x,z'')=\pi/2$. By \ref{building:segments} there is a geodesic  segment joining $y$ and $z''$ going through $x$. Then $x$ is a point of $[y,p_{z''}x]$ because $x$ can not be contained in $\st z''$ by \ref{building:edgelength}. Since $p_yx$ is a point of this segment, we get $p_{z''}x=p_{z''}p_yx$. Observe that $p_{z'}p_yx$ is a point of $\conv(C,\proj_{z'}C)\cap\partial\st z'=\overline{D}\in\simp(\Sigma'')$. The retraction $\rho_{\Sigma'',C}$ on $\Sigma''$ centered at $C$ maps $p_{z'}p_yx$ to $p_{z''}p_yx$, hence $p_{z'}p_yx=p_{z''}p_yx$. Therefore $x$ lies on $[y,p_{z'}p_yx]\subseteq\Sigma'$. This implies $d(x,z')=\pi-d(x,y)=\pi/2$ by \ref{building:segments} in contradiction to $z'\in\Delta^>(x)$.
\end{proof}

    \medskip
\begin{flushleft}
    \textsl{Bernd Schulz, Blumenstra{\ss}e 21, 63069 Offenbach, Germany\\
    Email: bernd.schulz@ivir.de}
\end{flushleft}

\end{document}